\tikzset{surface/.style={draw=blue!70!black, fill=blue!40!white, fill opacity=.6}}
\newcommand{\coneback}[4][]{
  \draw[canvas is xy plane at z=#2, #1] (45-#4:#3) arc (45-#4:225+#4:#3) -- (O) --cycle;
  }
\newcommand{\conefront}[4][]{
  \draw[canvas is xy plane at z=#2, #1] (45-#4:#3) arc (45-#4:-135+#4:#3) -- (O) --cycle;
  }
\tikzset{circ/.style={shape=circle, inner sep=1.5pt, draw, node contents=}}
\newtheorem{thm}{Theorem}[section]
\newtheorem{lem}[thm]{Lemma}
\newtheorem{prop}[thm]{Proposition}
\newtheorem{cor}[thm]{Corollary}
\newtheorem{defn}[thm]{Definition}
\newtheorem{rem}[thm]{Remark}
\newtheorem{nota}[thm]{Notation}
\newcommand{\R}{\mathbb{R}}
\DeclareMathOperator{\im}{Im}
\DeclareMathOperator{\Ker}{Ker}
\DeclareMathOperator{\PSL}{PSL}
\DeclareMathOperator{\SL}{SL}
\DeclareMathOperator{\GL}{GL}
\DeclareMathOperator{\Hom}{Hom}
\DeclareMathOperator{\Grass}{Grass}
\DeclareMathOperator{\Sym}{Sym}
\DeclareMathOperator{\Sp}{Sp}
\newcommand{\mathvec}[1]{\mathrm{#1}}
\DeclareRobustCommand*{\bfseries}{%
  \not@math@alphabet\bfseries\mathbf
  \fontseries\bfdefault\selectfont
  \boldmath
}
\newcommand{\ie}{i.e. }
\title{Maximal and Borel Anosov representations into $\Sp(4,\R)$}
\author{Colin Davalo}
\address{Mathematisches Institut, Ruprecht-Karls Universität Heidelberg, Im Neuenheimer Feld 205, 69120 Heidelberg, Germany}
\email{cdavalo@mathi.uni-heidelberg.de}
\date{\today}
\begin{document}

\begin{abstract}

We prove that any Borel Anosov representation of a surface group into $\Sp(4,\R)$ that has maximal Toledo invariant must be Hitchin.
 We 	also prove that a representation of a surface group into $\Sp(2n,\R)$ that is $\{n-1,n\}$-Anosov is maximal if and only if it satisfies the hyperconvexity property $H_n$. 
 
\end{abstract}
\maketitle

\tableofcontents

\section{Introduction.}

In recent years, the study of discrete hyperbolic subgroups of semisimple Lie groups drew a lot of attention with the introduction of the concept of Anosov representations. Labourie introduced the notion of Anosov representations of the fundamental group $\Gamma_g$ of a closed surface of genus $g\geq 2$ into semi-simple Lie groups \cite{LabourieHyperconvexity}, generalizing the notion of convex-cocompact representations of hyperbolic groups into Lie groups of rank $1$ \cite{Guichard_2012}.
Representations satisfying this dynamical property have discrete image, are faithful and they form an open subset of the space of representations.
The notion of Anosov representations plays an important role in the study of higher Teichmüller spaces \cite{wienhard2018invitation}, and in particular \emph{Hitchin} representations and \emph{maximal} representations are Anosov \cite{LabourieHyperconvexity}, \cite{Burger2005MaximalRO}.

\medskip

In a semi-simple Lie group of rank at least $2$, several distinct notions of Anosov representations can be defined depending on the choice  of a conjugacy class of parabolic subgroup $P<G$. 
When $G=\PSL(N,\R)$ the minimal parabolic subgroup, \ie the parabolic subgroup that admits no proper parabolic subgroups, is called the \emph{Borel subgroup}.
A representation which is Anosov with respect to the Borel subgroup is also Anosov with respect to any other parabolic subgroup.
Such a representation is called \emph{Borel Anosov}. 

\medskip

%
%
%
%
%
%
%

Hitchin representations are representations $\rho:\Gamma\to \SL(N,\R)$ in the same connected component as the composition $\eta\circ \rho_0$ of a Fuchisan representation $\rho_0:\Gamma_g\to \SL(2,\R)$ and the irreducible representation $\eta:\SL(2,\R)\to \SL(N,\R)$.
Hitchin showed that the space of Hitchin representations up to conjugation by elements in $\GL(N,\R)$ is a topological ball of dimension $(N^2-1)(2g-2)$, which generalizes a property of Teichmüller space \cite{HITCHIN1992449}.
Labourie showed that these representations are Borel Anosov, and in particular are discrete and faithful. The space of Hitchin representations thus defines a \emph{higher Teichmüller space}, as defined in \cite{wienhard2018invitation}. There is a notion of Hitchin representations $\rho:\Gamma_g\to G$ for every simple \emph{split} Lie group
$G$, and for $G=\Sp(2n,\R)$ a representation is Hitchin if and only if its inclusion in $\SL(2n,\R)$ is Hitchin.

\medskip

If $G$ is a semi-simple \emph{Hermitian} Lie group of \emph{tube type}, for instance $\Sp(2n,\R)$, one can define another generalization of the Teichmüller space using the \emph{Toledo invariant}. The Toledo invariant associates to every representation an integer whose sign depends on the orientation of the Gromov boundary $\partial \Gamma_g$ of $\Gamma$ and whose absolute value is bounded by a generalized Schwarz-Milnor inequality. A representation $\rho:\Gamma_g\to G$ is \emph{maximal} if its Toledo invariant is maximal. The space of maximal representations is a union of connected components of discrete and faithful representations of surface groups \cite{BURGER2003387}. It is another kind of higher Teichmüller space. Maximal representations are Anosov with respect to a parabolic subgroup $P<G$ that is maximal \cite{Burger2005MaximalRO}, \ie that is not properly contained in any other parabolic subgroup. 

\medskip

The group $\Sp(4,\R)$ is the smallest group apart from $\SL(2,\R)$ that is both split and Hermitian of tube type. Maximal representations $\rho:\Gamma_g\to \Sp(4,\R)$ are $\lbrace 2\rbrace$-Anosov, \ie are Anosov with respect to the stabilizer of a Lagrangian.
Every Hitchin representation whose image lie in $\Sp(4,\R)\subset \SL(4,\R)$ is also maximal for one of the orientations of $\partial \Gamma_g$ \cite{Burger2005MaximalRO}, and is $\lbrace 1,2\rbrace$-Anosov, \ie Borel Anosov. This is stronger than just being $\lbrace 2\rbrace$-Anosov. A natural question, see for instance Canary \cite[{Question 50.7}]{Canary}, is the following: are Hitchin representations the only maximal and Borel Anosov representations in $\Sp(4,\R)$ ? In this paper we answer in the affirmative.

\begin{thm}
\label{thm:mainSp4}
Every representation $\rho:\Gamma_g\to \Sp(4,\R)$ of a surface group that is maximal and Borel Anosov is Hitchin.

\end{thm}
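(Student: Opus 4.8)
We have a maximal, Borel Anosov representation $\rho: \Gamma_g \to \Sp(4,\R)$. The plan is to show that the boundary curve associated to $\rho$ satisfies enough positivity/convexity to force $\rho$ into the Hitchin component, using the second main theorem of the paper (maximality plus $\{n-1,n\}$-Anosov is equivalent to the hyperconvexity property $H_n$) in the case $n=2$.

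The plan is to use the paper's second main theorem to reduce the statement to a hyperconvexity property of the limit curve, and then to recognize that property as characterizing the Hitchin component. First I would note that for $\Sp(4,\R)$ the minimal parabolic is the stabilizer of a full isotropic flag (a line contained in a Lagrangian plane), while the two maximal parabolics stabilize an isotropic line and a Lagrangian plane respectively; consequently a representation into $\Sp(4,\R)$ is Borel Anosov precisely when it is $\{1,2\}$-Anosov. So a maximal Borel Anosov $\rho:\Gamma_g\to\Sp(4,\R)$ is $\{1,2\}$-Anosov, and by the $\{n-1,n\}$-Anosov theorem with $n=2$ the maximality hypothesis is equivalent to $\rho$ satisfying the hyperconvexity property $H_2$. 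From this point on I would use only that $\rho$ is Borel Anosov and satisfies $H_2$.

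Next I would assemble a full flag limit curve in $\R^4$ and aim to show it is a hyperconvex Frenet curve. Being Borel Anosov, $\rho$ has continuous equivariant boundary maps $\xi^1:\partial\Gamma_g\to\mathbb{P}(\R^4)$ and $\xi^2:\partial\Gamma_g\to\Grass_2(\R^4)$ with $\xi^1(x)\subset\xi^2(x)$, $\xi^2(x)$ Lagrangian, and transversality of flags at distinct points at every level; setting $\xi^3(x):=\xi^1(x)^{\perp_\omega}$ completes this to an equivariant curve of complete flags. What I would need is that $\xi^1$ is hyperconvex, i.e. $\xi^1(x_1)\oplus\xi^1(x_2)\oplus\xi^1(x_3)\oplus\xi^1(x_4)=\R^4$ for pairwise distinct points, together with the osculating condition that the secants $\xi^1(x_1)+\dots+\xi^1(x_k)$ limit onto $\xi^k(x)$ as the points collide. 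The property $H_2$ should already deliver the constraints involving the Lagrangian planes (such as $\xi^1(x_1)\oplus\xi^1(x_2)\oplus\xi^2(x_3)=\R^4$); the four–point genericity of $\xi^1$ is where the symplectic structure must enter. Transversality forces $\omega$ to be nondegenerate on each secant plane $\sigma(x,y):=\xi^1(x)\oplus\xi^1(y)$ (otherwise $\xi^1(y)\subset\xi^1(x)^{\perp_\omega}=\xi^3(x)$, contradicting $\xi^1(y)\oplus\xi^3(x)=\R^4$), so $\sigma(x,y)^{\perp_\omega}$ is its unique $\omega$–complement; combining this with $H_2$ applied to several triples and with the cyclic order on $\partial\Gamma_g$ coming from maximality, I would rule out any nonzero intersection $\sigma(x_1,x_2)\cap\sigma(x_3,x_4)$, and then obtain the osculating statement by a collision argument using continuity of $\xi^1$.

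To finish, I would invoke the characterization of the Hitchin component by hyperconvex Frenet limit curves — Labourie's theorem together with its converse — to conclude that $\rho$, viewed in $\SL(4,\R)$, lies in the $\SL(4,\R)$–Hitchin component; since $\rho$ preserves $\omega$ and the $\Sp(4,\R)$–Hitchin component is exactly the locus of symplectic $\SL(4,\R)$–Hitchin representations, $\rho$ is $\Sp(4,\R)$–Hitchin.

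I expect the hard part to be the second step, and within it the passage from the hyperconvexity packaged in $H_2$ to genuine four–point hyperconvexity of the line curve: pairwise transversality of flags never implies joint genericity, so the symplectic form and the positivity furnished by maximality must be used in an essential way. This is precisely the point at which the argument must exploit that the target is $\Sp(4,\R)$ rather than $\SL(4,\R)$, consistent with the fact that the analogous question in $\SL(4,\R)$ is open. An alternative route would be to use the exceptional isogeny $\Sp(4,\R)\to\SO_0(2,3)$, replacing $\rho$ by its action on the hyperplane $\omega^{\perp}\subset\Lambda^2\R^4$ and appealing to known descriptions of maximal and Hitchin representations into $\SO(2,3)$, but I would expect the same difficulty to reappear in that language.
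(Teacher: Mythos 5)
There is a genuine gap, and it sits exactly where you wave your hands. You write that property $H_2$ ``should already deliver'' the condition $\xi^1(x_1)\oplus\xi^1(x_2)\oplus\xi^2(x_3)=\R^4$, and that the real difficulty is the four--point genericity of $\xi^1$. This is backwards. The directness of $x^1_\rho+y^1_\rho+z^2_\rho$ is precisely property $H_1$ (equivalently $\{1,1,2\}$-hyperconvexity), and it does \emph{not} follow formally from $H_2$: $H_2$ only constrains sums of the form $x^2_\rho+(y^2_\rho\cap z^3_\rho)+z^1_\rho$. The implication $H_2\Rightarrow H_1$ for $\{1,2\}$-Anosov representations into $\Sp(4,\R)$ is the technical heart of the paper (Theorem \ref{thm:H2toH1}), and its proof is not soft: it uses the $\mathcal{C}^1$ regularity of the Lagrangian boundary curve coming from $H_2$ (Theorem \ref{thm:SmoothBoundary}), the description of its tangent directions as rank-one forms in the charts $\mathcal{Q}(x^2_\rho)$ (Lemmas \ref{lem:DescriptionDerivativeBondaryMap1} and \ref{lem:DescriptionDerivativeBondaryMap2}), a cone-convexity argument (Lemma \ref{lem:ConeConvex}), and a cyclic-ordering statement for four associated points of $\mathbb{P}(x^2_\rho)$ (Lemma \ref{lem:Order4Points}). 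None of this is present in, or replaceable by, your sketch.

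Your treatment of the remaining steps also leaves the four-point hyperconvexity and the osculation property as unexecuted sketches (``I would rule out any nonzero intersection\dots'', ``a collision argument''). In the paper these are not proved directly: once $H_1$, $H_2$, $H_3$ (hence property $H$) and $3$-hyperconvexity are established, Labourie's Lemma 7.1 (Theorem \ref{thm:3HypImpliesHyp}) upgrades this to full hyperconvexity of $\xi^1$, after which Guichard's converse (Theorem \ref{thm:HypImplesHitchin}) gives Hitchin. Your overall architecture (reduce to $H_2$ via maximality, build the flag curve, conclude via the Labourie--Guichard characterization) matches the paper's, but the proposal omits the one implication that makes the theorem true and mislocates the difficulty onto a step that is handled by citation.
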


Borel Anosov representations come with	a natural \emph{boundary map} from the Gromov boundary $\partial \Gamma_g$ of $\Gamma_g$ into the space of full flags. The set of Hitchin representations was characterized by Labourie \cite{LabourieHyperconvexity} and Guichard \cite{GuichardHyperconvexity} as the set of Borel Anosov representations whose associated boundary map is \emph{hyperconvex}. In order to prove Theorem \ref{thm:mainSp4}, we prove that maximal and Borel Anosov representations of surface groups in $\Sp(4,\R)$ are hyperconvex. 
 
 More generaly we consider the hyperconvexity conditions $H_k$ in any dimension, which are conditions satisfied by an open set of $\lbrace k-1,k,k+1\rbrace$-Anosov representations, see Definition \ref{defn:Hn}. Pozzetti-Sambarino-Wienhard \cite{pozzetti2020conformality} showed that if a representation satisfies property $H_k$, then the boundary map associated with $\rho$ in the Grassmanian of $k$-planes has $\mathcal{C}^1$ image and has a derivative prescribed by the boundary maps in the Grassmanians of $(k-1)$-planes and $(k+1)$-planes, see Theorem \ref{thm:SmoothBoundary}. We recall this result and some of its consequences in Section \ref{sec:Smooth}.
 
\medskip 
 
 In Section \ref{sec:MaximalityHn} we use this result to prove the following characterization of maximal representation that are $\{n-1,n\}$-Anosov in $\Sp(2n,\R)$.

\begin{thm}
\label{thm:mainSp2n}
Let $\rho:\Gamma_g\to \Sp(2n,\R)$ be an $\{n-1,n\}$-Anosov representation of a surface group $\Gamma_g$. The representation $\rho$ satisfies property $H_{n}$ if and only if it is maximal for some orientation of $\partial\Gamma_g$.

\end{thm}

Maximal representation can be characterized as Anosov representations that admit an equivariant positive boundary map in the space of Lagrangians, see Definition \ref{defn:MaximalTriplet}. Our result is a new link between positivity for maximal representations and hyperconvexity of boundary maps, for $\{n-1,n\}$-Anosov representations. On the one hand we see that maximality forces property $H_n$. On the other hand we show that property $H_n$ implies positivity of the $n$-th boundary map combining the characterization of the tangents to boundary maps \cite{pozzetti2020conformality} together with the observation that a $\mathcal{C}^1$ curve whose derivative stays in a cone also stays in a the cone.

\begin{figure}
\begin{center}
\tdplotsetmaincoords{80}{45}
\tdplotsetrotatedcoords{-90}{180}{-90}
\usetikzlibrary{patterns}

\begin{tikzpicture}[tdplot_main_coords, scale=.6]

  \coordinate (O) at (0,0,0);
 
  \coneback[surface]{3}{3}{10}
\draw plot[variable=\t,domain=-1.5:1.5,samples=73,smooth] 
(1.73205080757*\t^2,\t-\t^3,\t+\t^3);
 \fill (1.73205080757,0,2) circle (2pt);
 \conefront[surface]{3}{3}{10};

  \node[left] at (1.73205080757,0,2){$y^2_{\rho}$};
  \fill (0,0,0) circle (2pt);
d  \node[left] at (-0.2,0,0){$x^2_{\rho}$};
\end{tikzpicture}
\begin{tikzpicture}[scale=.7]
\fill[cyan] (89:2) arc (89:226:2) -- cycle;
\draw (0,0) circle (2);
\node[below right] at (0,0) {$[y^2_\rho]$};
\fill (0,0) circle (2pt);
\fill[red] (90:2) circle (2pt);
\node[above] at (90:2) {$[x_\rho^1]$};
\draw[blue,fill=white] (270:2) circle (2pt);
\node[below] at (270:2) {$[z_\rho^3]$};
\fill[red] (225:2) circle (2pt);
\node[above right] at (45:2) {$[y_\rho^3]$};
\node[below left] at (225:2) {$[y_\rho^1]$};

\draw[dotted] (225:2) -- (45:2);
\draw[->] (0,0) -- (225:0.5);
\draw[blue,fill=white] (45:2) circle (2pt);
\end{tikzpicture}
\end{center}

\caption{The second boundary map for a Hitchin representation, and the main argument of the proof of Theorem \ref{thm:mainSp4}.}
\label{fig:introVeronese}

\end{figure}

%
%

\medskip

In order to prove Theorem \ref{thm:mainSp4} we use Theorem \ref{thm:mainSp2n} and prove in Section \ref{sec:H2toH1} that Borel Anosov representations $\rho:\Gamma_g\to \Sp(4,\R)$ which satisfy property $H_2$ also satisfy property $H_1$. 

For that we project the boundary map onto the parallel tube in the symmetric space between two Lagrangians in the boundary curve. Concretely, given $3$ points $(x,y,z)$ in the Gromov boundary of $\Gamma_g$ we consider their  full flags associated via the boundary map $(x^1_\rho,x^2_\rho,x^3_\rho)$, $(y^1_\rho,y^2_\rho,y^3_\rho)$, $(z^1_\rho,z^2_\rho,z^3_\rho)$  and construct $4$ points in the circle $\mathbb{P}(x^2_\rho)$ by projecting the lines $x^1_\rho$, $y^1_\rho$ and intersecting the hyperplanes $z_\rho^3$ and $y_\rho^3$, yielding 4 points on the boundary of a copy of the hyperbolic plane. The Lagrangian $y^2_\rho$ defines a point in the interior of this hyperbolic plane, see Figure \ref{fig:introVeronese}. 

We distinguish two possible configurations of these projections, one of which implies property $H_1$. To rule out the other configuration, we use again that the second boundary map has $\mathcal{C}^1$ image to show that the projection of the second boundary map must stay in a smaller convex cone, colored in the picture. 

This leads to a contradiction as the point corresponding to $y^2_\rho$ must lie in the geodesic joining the ideal points corresponding to $y_\rho^1$ and $y_\rho^3$, since $y^1_\rho\subset y^2_\rho\subset y^3_\rho$. This geodesic is disjoint from the convex if the four points are ordered as in the picture.

In Section \ref{sec:Hyperconvex} we recall results from Labourie and Guichard, implying that a Borel Anosov representation in $\Sp(4,\R)$ that satisfies property $H_1$ and $H_2$ is Hitchin.

\medskip

We hope that such geometric argument will be useful to rule out the existence of other kinds of Anosov representations.

%
%
%

\subsection*{Aknowledgments.}

I would like to thank Beatrice Pozzetti for exposing this problem to me, for discussing it and for her precious reviews of previous versions of the paper. The author was funded through the DFG Emmy Noether project 427903332 of B. Pozzetti. 

\section{Anosov representations.}
\label{sec:Anosov}

Let $\Gamma_g$ denote the fundamental group of a closed orientable surface of genus $g\geq 2$. This is an hyperbolic group in the sense of Gromov, and we will denote by $\partial \Gamma_g$ its Gromov boundary, which is a topological circle.

\medskip

Let $N\geq 2$ be an integer. Let us fix some Euclidean structure on $\mathbb{R}^{N}$, and for every element $M\in \SL(N,\mathbb{R})$ denote by $\sigma_1(M)\geq \sigma_2(M)\geq \cdots \geq \sigma_N(M)$ the singular values of $M$ in non-decreasing order. Given $\gamma\in \Gamma_g$ we will denote by $|\gamma|_w$ the word length of $\gamma$ with respect to some fixed finite generating set of $\Gamma_g$. 

The following definition is not the original one, but a characterization due to Kapovich-Leeb-Porti \cite{kapovichanosov} and Bochi-Potrie-Sambarino \cite{Bochi_2019}.

\begin{defn}[{\cite[Section 4]{Bochi_2019}}]
A representation $\rho$ of $\Gamma_g$ into $\SL(N,\mathbb{R})$ is $\Theta$-Anosov with $\Theta\subset \{1,\cdots,N\}$ if there exists some constants $C,\alpha>0$ such that for all $\gamma\in \Gamma_g$ and $k\in \Theta$ :
$$\frac{\sigma_{k+1}\left(\rho(\gamma)\right)}{\sigma_{k}\left(\rho(\gamma)\right)}\leq Ce^{-\alpha |\gamma|_w}.$$

If a representation is Anosov with respect to $\Delta=\{1,\cdots,N\}$, then it is called \emph{Borel Anosov}.
\end{defn}

\begin{rem}
If a representation is $\Theta$-Anosov then it is automatically $\Theta'$-Anosov for all $\Theta'\subset \Theta$.
\end{rem}

For a general semi-simple Lie group $G$, the Anosov property depends on a subset of the set of simple roots, or equivalently of a conjugacy class of parabolic subgroups. Here we identified the set $\Delta$ of simple roots of the simple Lie group $\SL(N,\R)$ with the set $\{1,\cdots, N-1\}$.

\medskip

Boundary maps are important objects naturally associated to an Anosov representation.

\begin{thm}[{\cite{Guichard_2012},\cite{Bochi_2019}}]
\label{thm:CaracterizationBoundaryMaps}
Let $\rho:\Gamma_g \to \SL(N,\mathbb{R})$ be a $\{k\}$-Anosov representation. Let $\Grass(k,N)$ be the Grassmannian of $k$-dimensional subspaces in $\R^N$. There exists a unique continuous $\rho$-equivariant map $\xi_\rho^k:\partial \Gamma_g\to \Grass(k,N)$ that is \emph{dynamic preserving}, i.e for all element $\gamma\in \Gamma_g$ if $\gamma^+$ is the unique attracting fixed point of $\gamma$ in $\partial \Gamma_g$ then $\xi_\rho^k(\gamma^+)$ is the unique attracting fixed point of $\rho(\gamma)$ in $\Grass(k,N)$.

\end{thm}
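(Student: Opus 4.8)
The plan is to construct $\xi_\rho^k$ as a limit of Cartan attracting subspaces along geodesic rays, to read off all of its properties from a single quantitative contraction estimate on $\Grass(k,N)$, and then to pin the map down uniquely by density of attracting fixed points. For $M\in\SL(N,\R)$ with a singular value gap $\sigma_k(M)>\sigma_{k+1}(M)$, write a singular value decomposition $M=K\Sigma \transpose{L}$ with $K,L\in\SO(N)$ and $\Sigma$ diagonal; let $U^k(M):=K\cdot\langle e_1,\dots,e_k\rangle$ be the span of the $k$ most expanded directions and let $\Xi^-(M):=L\cdot\langle e_{k+1},\dots,e_N\rangle$ be the span of the $N-k$ most contracted ones. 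The $\{k\}$-Anosov hypothesis gives $\sigma_{k+1}(\rho(g))/\sigma_k(\rho(g))\le Ce^{-\alpha|g|_w}$, so $U^k(\rho(g))$ is well defined once $|g|_w$ is large; these subspaces are the candidate values of the boundary map.

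The technical heart is the following contraction estimate: for every $\delta>0$ there is a constant $C'$ such that whenever $V\in\Grass(k,N)$ satisfies $d(V,\Xi^-(M))\ge\delta$ one has $d(M\cdot V,\,U^k(M))\le \frac{C'}{\delta}\,\frac{\sigma_{k+1}(M)}{\sigma_k(M)}$. In words, a large gap forces $M$ to contract every $\delta$-transverse $k$-plane into a small ball around $U^k(M)$; this follows from the singular value decomposition by comparing the expansion $\ge\sigma_k$ on the image of the top block with the expansion $\le\sigma_{k+1}$ transverse to it. A routine consequence, obtained by applying the estimate to a product $AB$ with $V=U^k(B)$ (or, when $B$ has bounded norm, directly), is the perturbation bound $d(U^k(AB),U^k(A))\le \frac{C''}{\delta}\frac{\sigma_{k+1}(A)}{\sigma_k(A)}$, valid whenever $U^k(B)$ is $\delta$-transverse to $\Xi^-(A)$.

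Given $\xi\in\partial\Gamma_g$, I would take a geodesic ray $(\gamma_n)_{n\ge0}$ in $\Gamma_g$ with $\gamma_n\to\xi$ and set $\xi_\rho^k(\xi):=\lim_n U^k(\rho(\gamma_n))$. Writing $\gamma_m=\gamma_n h$ with $h=\gamma_n^{-1}\gamma_m$, the perturbation bound gives $d\big(U^k(\rho(\gamma_m)),U^k(\rho(\gamma_n))\big)\le \frac{C''}{\delta}\frac{\sigma_{k+1}(\rho(\gamma_n))}{\sigma_k(\rho(\gamma_n))}\le C'''e^{-\alpha n}$ uniformly in $m\ge n$, so the sequence converges geometrically. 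Independence of the ray follows because two geodesic rays with the same endpoint fellow-travel in the hyperbolic group $\Gamma_g$, so the same estimate forces their Cartan subspaces to share a limit; continuity of $\xi_\rho^k$ comes from the fact that rays toward nearby boundary points fellow-travel for a long time, turning the uniform rate into uniform continuity. Equivariance is immediate from $(\eta\gamma_n)\to\eta\xi$ together with $d(U^k(\rho(\eta)\rho(\gamma_n)),\rho(\eta)U^k(\rho(\gamma_n)))\to0$ (the perturbation bound again), which yields $\xi_\rho^k(\eta\xi)=\rho(\eta)\cdot\xi_\rho^k(\xi)$.

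For the dynamic-preserving property, fix $g$ with attracting fixed point $g^+$. Applying the Anosov inequality to the powers $g^n$ and using $\sigma_i(\rho(g)^n)^{1/n}\to|\lambda_i(\rho(g))|$ yields a strict eigenvalue gap $|\lambda_k(\rho(g))|>|\lambda_{k+1}(\rho(g))|$, so $\rho(g)$ is $k$-proximal with a unique attracting fixed point $\ell^+\in\Grass(k,N)$ equal to $\lim_n U^k(\rho(g)^n)$; since $(g^n)$ is a quasi-geodesic ray to $g^+$, this gives $\xi_\rho^k(g^+)=\ell^+$. Uniqueness then follows because the attracting fixed points $\{g^+:g\in\Gamma_g\}$ are dense in $\partial\Gamma_g$, so any continuous $\rho$-equivariant dynamic-preserving map agrees with $\xi_\rho^k$ on a dense set, hence everywhere. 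The main obstacle is the interplay between the contraction estimate and the \emph{uniform transversality} it presupposes: to apply the perturbation bound along a ray one needs $d(U^k(\rho(h)),\Xi^-(\rho(\gamma_n)))\ge\delta$ uniformly, i.e. that the forward attracting subspaces stay uniformly transverse to the backward repelling subspaces. This uniform transversality must itself be extracted from the Anosov condition — equivalently from the uniform separation of distinct boundary endpoints along geodesics — and it is the real content underlying the clean limiting argument.
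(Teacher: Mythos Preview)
The paper does not prove this theorem; it is quoted from \cite[Section~4.2]{Bochi_2019} and used as a black box, so there is no in-paper argument to compare against. Your sketch is essentially the Bochi--Potrie--Sambarino construction that the citation points to: build the boundary map as a limit of Cartan attractors $U^k(\rho(\gamma_n))$ along geodesic rays, use the singular-value gap to get a contraction/perturbation estimate, and read off continuity, equivariance and proximality from it; uniqueness then follows from density of poles. The outline is sound.

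One small wrinkle worth tightening: the cleanest way to make the Cauchy estimate along a ray work is to take $h=\gamma_n^{-1}\gamma_{n+1}$ a \emph{generator} (so $\rho(h)$ has uniformly bounded norm) and telescope. This uses only the elementary bound
\[
d\big(U^k(AB),\,U^k(A)\big)\;\le\;\frac{\sigma_1(B)}{\sigma_N(B)}\cdot\frac{\sigma_{k+1}(A)}{\sigma_k(A)},
\]
which requires no transversality hypothesis at all, and the geometric series does the rest. The way you phrase it, with $h=\gamma_n^{-1}\gamma_m$ arbitrary and the estimate coming from the contraction lemma applied at $V=U^k(\rho(h))$, you are asking for $d\big(\rho(\gamma_n)\cdot U^k(\rho(h)),\,U^k(\rho(\gamma_n))\big)$ rather than $d\big(U^k(\rho(\gamma_n h)),\,U^k(\rho(\gamma_n))\big)$, and you then need the uniform transversality you flag at the end. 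That transversality is genuinely needed elsewhere (for the transversality of $\xi_\rho^k$ and $\xi_\rho^{N-k}$, Proposition~\ref{prop:TransversalityBoundaryMaps}), but for the existence, continuity, equivariance and dynamic-preserving parts of the present statement it can be avoided entirely by the bounded-increment version above.
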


The property of being dynamic preserving determines $\xi_\rho^k$, since the set of attracting fixed point of elements of $\Gamma_g$ is dense in $\partial \Gamma_g$.

\begin{nota}

\label{nota:NotationBoundarymaps}
For any $\{k\}$-Anosov representation and any $x\in\partial \Gamma_g$ we will write $x_\rho^k:= \xi_\rho^k(x)$ as in \cite{pozzetti2020conformality} to make expressions involving boundary maps lighter. We will still keep the notation $\xi^k_\rho$ to denote the boundary map itself. As a convention $x^0_\rho=\{0\}$ and $x^N_\rho=\R^N$ for any $x\in \partial \Gamma_g$.
\end{nota}

Boundary maps satisfy additional properties: they are \emph{transverse} and \emph{compatible}.

\begin{prop}[{\cite{Guichard_2012},\cite{Bochi_2019}}]
\label{prop:TransversalityBoundaryMaps}
Let $\rho:\Gamma_g \to \SL(N,\mathbb{R})$  be a $\{k\}$-Anosov representation. The representation $\rho$ is also $\{N-k\}$-Anosov, and for every pair $x,y\in \partial \Gamma_g$ of distinct points, $x_\rho^k$ and $y_\rho^{N-k}$ are transverse \emph{(transversality)}.
If $\rho$ is $\{k,\ell\}$-Anosov with $k\leq \ell$ then $x_\rho^k\subset x_\rho^\ell$ for all $x\in \partial \Gamma_g$ \emph{(compatibility)}.
\end{prop}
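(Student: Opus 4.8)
The plan is to prove the three assertions separately, since each rests on a different mechanism and only transversality carries real content. For the duality statement that a $\{k\}$-Anosov representation is also $\{N-k\}$-Anosov, I would use the singular value identity $\sigma_i(M^{-1})=\sigma_{N+1-i}(M)^{-1}$ for $M\in\SL(N,\R)$ together with the symmetry $|g|_w=|g^{-1}|_w$ of the word length. Substituting $g^{-1}$ for $g$ in the $\{k\}$-Anosov estimate gives
\[
\frac{\sigma_{N-k+1}(\rho(g))}{\sigma_{N-k}(\rho(g))}=\frac{\sigma_{k+1}(\rho(g^{-1}))}{\sigma_{k}(\rho(g^{-1}))}\le Ce^{-\alpha|g^{-1}|_w}=Ce^{-\alpha|g|_w},
\]
which is exactly the $\{N-k\}$-Anosov inequality, with the same constants; Theorem \ref{thm:CaracterizationBoundaryMaps} then produces the boundary map $\xi_\rho^{N-k}$.

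For transversality I would use the description of the boundary map as a limit of singular subspaces that underlies Theorem \ref{thm:CaracterizationBoundaryMaps}: writing $U_k^+(M)\in\Grass(k,N)$ for the span of the top $k$ left singular directions of $M$, one has $x_\rho^k=\lim_n U_k^+(\rho(g_n))$ for any sequence $g_n\to x$ in $\partial\Gamma_g$. Given distinct $x,y$, I would choose $g_n$ with $g_n\to x$ and $g_n^{-1}\to y$, which is possible in a hyperbolic group precisely because $x\neq y$; then simultaneously $y_\rho^{N-k}=\lim_n U_{N-k}^+(\rho(g_n^{-1}))$, and this last subspace is the span of the bottom $N-k$ right singular directions of $\rho(g_n)$. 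It remains to show that the top-$k$ (left) and complementary bottom-$(N-k)$ (right) singular subspaces of $\rho(g_n)$ converge to a transverse pair, i.e. $x_\rho^k\oplus y_\rho^{N-k}=\R^N$. This is the heart of the matter and does not follow from convergence alone: the two subspaces are read off from opposite sides of the singular value decomposition and could a priori degenerate onto one another. This is exactly where the uniformity of the Anosov gap is needed — the bound $\sigma_k(\rho(g_n))/\sigma_{k+1}(\rho(g_n))\ge C^{-1}e^{\alpha|g_n|_w}\to\infty$, together with the quasi-isometric embedding property of Anosov representations (which keeps $|g_n|_w$ comparable to the size of the Cartan projection and controls the relative position of the attracting and repelling data at $x$ and at $y$), yields a uniform lower bound on the angle between these subspaces that persists in the limit.

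Finally, for compatibility I would use that the attracting fixed points $\gamma^+$ of infinite-order elements $\gamma\in\Gamma_g$ are dense in $\partial\Gamma_g$, as recalled after Theorem \ref{thm:CaracterizationBoundaryMaps}. At such a point $\xi_\rho^k(\gamma^+)$ and $\xi_\rho^\ell(\gamma^+)$ are, by the dynamic-preserving property, the attracting fixed subspaces of $\rho(\gamma)$ in $\Grass(k,N)$ and $\Grass(\ell,N)$, namely the spans of its top $k$ and top $\ell$ dominant eigendirections; since $k\le\ell$ these are nested, so $\xi_\rho^k(\gamma^+)\subset\xi_\rho^\ell(\gamma^+)$. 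The incidence set $\{(V,W)\in\Grass(k,N)\times\Grass(\ell,N):V\subset W\}$ is closed, the map $x\mapsto(x_\rho^k,x_\rho^\ell)$ is continuous, and it sends a dense set into this closed set, so $x_\rho^k\subset x_\rho^\ell$ for every $x\in\partial\Gamma_g$. Thus duality reduces to a singular-value identity and compatibility to a density-and-continuity argument, while the main obstacle is transversality, and more precisely the uniform angle estimate above: a singular value gap that is merely present, rather than growing uniformly in $|g|_w$, would not prevent the attracting and repelling subspaces from collapsing in the limit.
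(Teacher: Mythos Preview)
The paper does not prove this proposition: it is quoted from \cite[Section 4.2]{Bochi_2019} and used as input, so there is no proof in the paper to compare against.

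Your sketch follows the approach of Bochi--Potrie--Sambarino and is correct in outline. The duality computation and the compatibility argument (eigenspace nesting at attracting fixed points, then density and continuity into the closed incidence relation $\{V\subset W\}$) are complete as written. For transversality you have the right setup --- realising $x_\rho^k$ and $y_\rho^{N-k}$ as limits of the Cartan attractor $U_k^+(\rho(g_n))$ and the Cartan repeller $U_{N-k}^+(\rho(g_n)^{-1})$ along a common sequence with $g_n\to x$ and $g_n^{-1}\to y$ --- and you correctly flag that the difficulty is that these two subspaces are read off from the $U$-side and the $V$-side of the singular value decomposition, so transversality is not automatic at finite $n$. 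Your invocation of the exponentially growing gap together with the quasi-isometric embedding is the right mechanism but remains a sketch: the precise statement that closes it (and which is what \cite{Bochi_2019} actually proves) is a stability estimate to the effect that if $A$ and $B$ both have large $k$-th singular value gap, then $U_k^+(AB)$ is close to $U_k^+(A)$ while $U_{N-k}^+\bigl((AB)^{-1}\bigr)$ is close to $U_{N-k}^+(B^{-1})$; iterated along the sequence, this exhibits $\bigl(\xi_\rho^k(x),\xi_\rho^{N-k}(y)\bigr)$ as the pair of limit directions of a dominated splitting, hence transverse. So your proposal is sound, and the only step not fully justified is precisely the one you yourself single out as the crux.
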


As a consequence the image of boundary maps at two different point are in  general position.

\begin{cor}
Let $k,\ell\geq 1$. Let $x,y\in \partial \Gamma_g$ be distinct points :
$$\dim\left(x^k_\rho\cap y_\rho^\ell\right)=\max(k+\ell-N,0).$$
\end{cor}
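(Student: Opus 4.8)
The plan is to reduce the statement to the two structural properties already recorded in Proposition~\ref{prop:TransversalityBoundaryMaps}: \emph{transversality}, which in complementary dimensions says that $x^j_\rho \oplus y^{N-j}_\rho = \R^N$ for distinct $x,y$, and \emph{compatibility}, which says $x^a_\rho \subseteq x^b_\rho$ whenever $a\leq b$ and $\rho$ is $\{a,b\}$-Anosov. (Here $n=N$ is the ambient dimension.) First I would observe that since $\rho$ is $\{k\}$- and $\{\ell\}$-Anosov it is also $\{N-k\}$- and $\{N-\ell\}$-Anosov, hence $\{k,\ell,N-k,N-\ell\}$-Anosov; so all four boundary maps appearing below are defined and any two of them are compatible.

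I would then split into two cases according to the sign of $k+\ell-N$. If $k+\ell\leq N$, then $\ell\leq N-k$, so compatibility gives $y^\ell_\rho \subseteq y^{N-k}_\rho$; combined with the transversality relation $x^k_\rho \cap y^{N-k}_\rho = \{0\}$ this forces $x^k_\rho \cap y^\ell_\rho = \{0\}$, which is $\max(k+\ell-N,0)=0$. If $k+\ell> N$, then $N-\ell< k$, so compatibility gives $x^{N-\ell}_\rho \subseteq x^k_\rho$; since $x^{N-\ell}_\rho$ and $y^\ell_\rho$ are transverse of complementary dimensions they span $\R^N$, hence a fortiori $x^k_\rho + y^\ell_\rho = \R^N$, and the Grassmann formula $\dim(x^k_\rho\cap y^\ell_\rho) = \dim x^k_\rho + \dim y^\ell_\rho - \dim(x^k_\rho + y^\ell_\rho)$ yields $k+\ell-N = \max(k+\ell-N,0)$.

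I do not expect a genuine obstacle: this is a purely linear-algebraic consequence of transversality and compatibility. The only points deserving a moment's care are the degenerate situations — for instance $\ell = N-k$, where $\{\ell,N-k\}$ is a singleton and the compatibility containment is in fact an equality $y^\ell_\rho = y^{N-k}_\rho$, so the argument still applies — and, if one wants the formula to persist at the extreme values, the convention $x^0_\rho=\{0\}$, $x^N_\rho=\R^N$ from Notation~\ref{nota:NotationBoundarymaps}, for which both sides reduce to $0$ or to $k$ (resp. $\ell$) in the obvious way.
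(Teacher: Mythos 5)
Your proof is correct and is exactly the argument the paper intends: the corollary is stated without proof as an immediate consequence of Proposition~\ref{prop:TransversalityBoundaryMaps}, and your two-case reduction to compatibility plus transversality in complementary dimensions (with the Grassmann formula in the case $k+\ell>N$) is the standard way to fill it in. You are also right that the $n$ in the displayed formula should read $N$, the ambient dimension.
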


Let us assume now that $N=2n$ is even and let us fix a symplectic form $\omega$ on $\R^{2n}$. Consider the subgroup $\Sp(2n,\mathbb{R})\subset \SL(2n,\mathbb{R})$ consisting of elements which preserve $\omega$: representations into $\Sp(2n,\R)$ can be seen as particular examples of representations into $\SL(2n,\R)$. The boundary maps of Anosov representations whose images lie in $\Sp(2n,\mathbb{R})$ have some additional properties.

\begin{lem}
\label{lem:ParticularOrthogonality}
Let $\rho:\Gamma_g\to \Sp(2n,\R)$ be a $\{k\}$-Anosov representation. For any $x\in \partial\Gamma_g$, $\left(x_\rho^k\right)^\perp=x_\rho^{2n-k}$, where $\left(x_\rho^k\right)^\perp$ is the orthogonal of $x_\rho^k$ with respect to $\omega$. In particular if $k\leq n$ the space $x_\rho^k$ is isotropic.

\end{lem}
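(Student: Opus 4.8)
The plan is to use the dynamic-preserving property of the boundary maps together with the observation that $\Sp(2n,\R)$ preserves the symplectic form $\omega$, so that the map $x\mapsto (x_\rho^k)^\perp$ is another candidate $(2n-k)$-dimensional boundary map; by uniqueness (Theorem \ref{thm:CaracterizationBoundaryMaps}) it must coincide with $\xi_\rho^{2n-k}$. First I would recall that by Proposition \ref{prop:TransversalityBoundaryMaps} the representation $\rho$, being $\{k\}$-Anosov, is also $\{2n-k\}$-Anosov, so the boundary map $\xi_\rho^{2n-k}\colon\partial\Gamma_g\to\Grass(2n-k,2n)$ exists and is the unique continuous $\rho$-invariant dynamic-preserving map of that dimension. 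Then I would define $\zeta\colon\partial\Gamma_g\to\Grass(2n-k,2n)$ by $\zeta(x)=(x_\rho^k)^\perp$, where the orthogonal is taken with respect to $\omega$; since $\dim(x_\rho^k)=k$ and $\omega$ is nondegenerate, $\dim\zeta(x)=2n-k$, and $\zeta$ is continuous because $V\mapsto V^\perp$ is an algebraic (hence continuous) map on the Grassmannian.

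Next I would check $\rho$-equivariance: for $g\in\Gamma_g$, the fact that $\rho(g)\in\Sp(2n,\R)$ means $\omega(\rho(g)u,\rho(g)v)=\omega(u,v)$, from which $(\rho(g)\cdot V)^\perp=\rho(g)\cdot V^\perp$ for any subspace $V$; applying this with $V=x_\rho^k$ and using equivariance of $\xi_\rho^k$ gives $\zeta(g\cdot x)=\rho(g)\cdot\zeta(x)$. The key remaining step is that $\zeta$ is dynamic preserving: for $g\in\Gamma_g$ with attracting fixed point $g^+\in\partial\Gamma_g$, the space $\xi_\rho^k(g^+)=(g^+)_\rho^k$ is the attracting fixed point of $\rho(g)$ acting on $\Grass(k,2n)$. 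I would argue that the attracting fixed point of $\rho(g)$ on $\Grass(k,2n)$ is the sum of eigenspaces of $\rho(g)$ for its $k$ largest (in modulus) eigenvalues, and since $\rho(g)\in\Sp(2n,\R)$ its eigenvalues come in pairs $\lambda,\lambda^{-1}$ with $\omega$ pairing the $\lambda$-eigenspace nondegenerately with the $\lambda^{-1}$-eigenspace; hence the orthogonal of the sum of the top $k$ eigenspaces is exactly the sum of the top $2n-k$ eigenspaces, which is the attracting fixed point of $\rho(g)$ on $\Grass(2n-k,2n)$. Therefore $\zeta(g^+)$ is the attracting fixed point of $\rho(g)$ in $\Grass(2n-k,2n)$, so $\zeta$ is dynamic preserving.

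By the uniqueness part of Theorem \ref{thm:CaracterizationBoundaryMaps}, $\zeta=\xi_\rho^{2n-k}$, i.e.\ $(x_\rho^k)^\perp=x_\rho^{2n-k}$ for all $x\in\partial\Gamma_g$. Finally, when $k\leq n$ we have $k\leq 2n-k$, so by the compatibility statement in Proposition \ref{prop:TransversalityBoundaryMaps} applied to the $\{k,2n-k\}$-Anosov representation $\rho$, $x_\rho^k\subset x_\rho^{2n-k}=(x_\rho^k)^\perp$, which says precisely that $\omega$ vanishes on $x_\rho^k$, i.e.\ $x_\rho^k$ is isotropic.

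I expect the main obstacle to be making the dynamic-preserving check fully rigorous: one must be careful that the attracting fixed point of $\rho(g)$ on the Grassmannian is well-defined (which is guaranteed precisely because $\rho$ is $\{k\}$- and $\{2n-k\}$-Anosov, so $\rho(g)$ is loxodromic in the relevant sense with a gap in singular values, and by the Anosov property this transfers to a gap in the moduli of eigenvalues of $\rho(g)$), and that the symplectic pairing of eigenspaces really forces the orthogonality claim for the eigenvalue-moduli ordering. An alternative that sidesteps eigenvalue bookkeeping is to note that both $\zeta$ and $\xi_\rho^{2n-k}$ are continuous $\rho$-equivariant maps agreeing on the dense set of attracting fixed points — and to verify agreement at an attracting fixed point $g^+$ it suffices to observe that $(g^+)_\rho^k$ is $\rho(g)$-invariant, hence so is its $\omega$-orthogonal, and a $\rho(g)$-invariant $(2n-k)$-plane transverse to $(g^-)_\rho^{k}$ (which one gets from transversality of $\xi_\rho^k$ at $g^\pm$ combined with the orthogonality $(g^-)_\rho^{2n-k}=((g^-)_\rho^k)^\perp$) is forced to be the attracting one; this still requires a short argument but avoids explicit diagonalization.
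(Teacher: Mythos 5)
Your proposal is correct and follows essentially the same route as the paper: both identify $x\mapsto(x_\rho^k)^\perp$ with $\xi_\rho^{2n-k}$ via the uniqueness of the dynamic-preserving boundary map, checked at attracting fixed points using that $\Sp(2n,\R)$ commutes with taking $\omega$-orthogonals, and both deduce isotropy from compatibility. The paper skips your eigenvalue bookkeeping entirely by observing that $V\mapsto V^\perp$ conjugates the $\rho(g)$-action on $\Grass(k,2n)$ to that on $\Grass(2n-k,2n)$, so it carries the attracting fixed point to the (unique) attracting fixed point — exactly the shortcut you sketch in your closing ``alternative.''
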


\begin{proof}

The orthogonality condition holds for a closed $\Gamma_g$-equivariant 
subset of $\partial \Gamma_g$. Since the action of $\Gamma_g$ is minimal on $\partial \Gamma_g$ (\cite{HypRef} Proposition 4.2), it is sufficient to check it for a single point. Let $x$ be the attracting fixed point of an element $\gamma\in \Gamma_g$, so $x_\rho^k$ is the unique attracting fixed $k$-dimensional subspace of $\rho(\gamma)$ and $x_\rho^{n-k}$ the unique attracting fixed $(2n-k)$-dimensional subspace of $\rho(\gamma)$. 

Since $\gamma\in \Sp(2n,\mathbb{R})$, it maps any subspace $V^\perp$ for $V\subset \R^{2n}$ to $(\gamma\cdot V)^\perp$. Hence $\left(x_\rho^k\right)^\perp$ is an attracting fixed point for the action of $\gamma$ on the space of $(2n-k)$-dimensional subspaces.
 Therefore $\left(x_\rho^k\right)^\perp=x_\rho^{2n-k}$. If $k\leq n$, then $x_\rho^{2n-k}\subset\left(x_\rho^{k}\right)^\perp $ and hence $x_\rho^k$ is isotropic.

\end{proof}

\section{Charts of the space of Lagrangians and maximality.}

\label{sec:LinAlg}

Recall that we fixed a symplectic structure $\omega$ on $\R^{2n}$. Let $\mathcal{L}_n$ be the space of \emph{Lagrangians} in $\R^{2n}$, \ie the space of $n$-dimensional subspaces of $\R^{2n}$ on which $\omega$ vanishes. Let $P,Q\in \mathcal{L}_n$ be two transverse Lagrangians, \ie with trivial intersection.

\begin{defn}
A linear map $u$ between $P$ and $Q$ is \emph{symmetric} (with respect to $\omega)$ if for all $\mathvec{v},\mathvec{w}\in P$:
$$\omega\left(\mathvec{v},u(\mathvec{w})\right)= \omega\left(\mathvec{w},u(\mathvec{v})\right).$$

The space of symmetric linear maps $u$ from $P$ to $Q$ will be denoted by $\Sym_{P,Q}$.  
\end{defn}

For $Q\in \mathcal{L}_n$ let $U_Q$ be the set of Lagrangians transverse to $Q$. The open sets $(U_Q)_{Q\in \mathcal{L}_n}$ form an open covering of $\mathcal{L}_n$. Given a Lagrangian $P$ transverse to the Lagrangian $Q$, we get an identification of $U_Q$ with the vector space $\Sym_{P,Q}$. This provides a family of linear charts of $\mathcal{L}_n$.

\begin{prop}
\label{prop:ChartSym}
The graph of an element  $u\in \Sym_{P,Q}$ is an element of $U_Q$. This defines an identification of $\Sym_{P,Q}$ with $U_Q\subset \mathcal{L}_n$.

\end{prop}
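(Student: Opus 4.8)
The plan is to unpack the two assertions in the statement: (i) for $u\in\Sym_{P,Q}$ the graph $G_u:=\{\mathvec{v}+u(\mathvec{v}):\mathvec{v}\in P\}$ is a Lagrangian transverse to $Q$, i.e. lies in $U_Q$; and (ii) the assignment $u\mapsto G_u$ is a bijection from $\Sym_{P,Q}$ onto $U_Q$. Throughout I would use the decomposition $\R^{2n}=P\oplus Q$ coming from transversality of $P$ and $Q$, and denote by $\pi_P$, $\pi_Q$ the associated projections onto $P$ along $Q$ and onto $Q$ along $P$.

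For (i), transversality is the easy half and does not even use symmetry: if $\mathvec{v}+u(\mathvec{v})\in Q$ then, since $u(\mathvec{v})\in Q$, we get $\mathvec{v}\in P\cap Q=\{0\}$, so $G_u\cap Q=\{0\}$, and as $\dim G_u=\dim Q=n$ this gives $G_u\in U_Q$. For the Lagrangian condition I would expand, for $\mathvec{v},\mathvec{w}\in P$, the quantity $\omega(\mathvec{v}+u(\mathvec{v}),\mathvec{w}+u(\mathvec{w}))$; the terms $\omega(\mathvec{v},\mathvec{w})$ and $\omega(u(\mathvec{v}),u(\mathvec{w}))$ vanish because $P$ and $Q$ are Lagrangian, leaving $\omega(\mathvec{v},u(\mathvec{w}))-\omega(\mathvec{w},u(\mathvec{v}))$, which vanishes for all $\mathvec{v},\mathvec{w}\in P$ exactly when $u$ is symmetric. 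Hence $G_u$ is an isotropic $n$-plane, i.e. a Lagrangian, precisely for $u\in\Sym_{P,Q}$.

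For (ii), injectivity is immediate: applying $\pi_P$ shows that an element of $G_u$ is determined by its $P$-component $\mathvec{v}$, and then its $Q$-component reads off $u(\mathvec{v})$, so $G_u=G_{u'}$ forces $u=u'$. For surjectivity, given any $L\in U_Q$, the restriction $\pi_P|_L\colon L\to P$ is injective (its kernel is $L\cap Q=\{0\}$) hence, by the dimension count $\dim L=\dim P=n$, a linear isomorphism; setting $u:=\pi_Q\circ(\pi_P|_L)^{-1}\colon P\to Q$ one checks directly that $L=G_u$, and if moreover $L$ is Lagrangian then the computation in (i) forces $u\in\Sym_{P,Q}$. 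There is no serious obstacle here — everything is elementary linear algebra — the only points deserving care are keeping the two projections ($\pi_P$ versus $\pi_Q$) straight and invoking the dimension counts that upgrade the injective maps between $n$-planes to isomorphisms; if one also wants "identification" to include that $u\mapsto G_u$ is a homeomorphism (for the manifold structure on $\mathcal{L}_n$), that follows because in the standard Grassmannian charts the map is given by rational, in fact affine-linear, formulas in the entries of $u$.
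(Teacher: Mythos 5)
Your proof is correct and follows the same route as the paper: the central step in both is expanding $\omega(\mathvec{v}+u(\mathvec{v}),\mathvec{w}+u(\mathvec{w}))$ and using that $P$ and $Q$ are Lagrangian to reduce the isotropy of the graph to the symmetry of $u$. You are in fact more complete than the paper, which only records this computation and leaves the transversality of the graph to $Q$ and the bijectivity of $u\mapsto G_u$ (your projection argument) as implicit standard linear algebra.
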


\begin{proof}

Recall that the graph of a linear map $u:P\to Q$ is the vector subspace of elements $\mathvec{v}+u(\mathvec{v})$ for $\mathvec{v}\in P$. It is a Lagrangian if and only if for all $\mathvec{v},\mathvec{w}\in P$:
$$\omega\left(\mathvec{v}+u(\mathvec{v}),\mathvec{w}+u(\mathvec{w})\right)=0.$$

Since $P,Q$ are Lagrangians this is equivalent to having for all $\mathrm{v},\mathrm{w}\in P$:
$$\omega\left(\mathvec{v},u(\mathvec{w})\right)-\omega\left(\mathvec{w},u(\mathvec{v})\right)=0.$$

Hence the graph of $u$ is a Lagrangian if and only if $u\in \Sym_{P,Q}$.

\end{proof}

\begin{nota}
Let $P,Q$ be transverse Lagrangians and $R$ be a Lagrangian transverse to $Q$, \ie in $U_Q$. We denote by $u^R_{P,Q}$ the corresponding element in $\Sym_{P,Q}$.
\end{nota}

Bilinear symmetric forms can be degenerate: they can have singular spaces. For any vector space $V$ let $\mathcal{Q}(V)$ be the space of symmetric bilinear forms on $V$.

\begin{defn}
\label{defn:SingularSubspace}
A subspace $U$ of a vector space $V$ is \emph{singular} for a symmetric bilinear form $q$ in $\mathcal{Q}(V)$ if for all $\mathvec{v}\in V, \mathvec{w}\in U$, on has $q(\mathvec{v},\mathvec{w})=0$.
\end{defn}

 Let $P,Q$ be two transverse Lagrangians in $\mathcal{L}_n$.

\begin{prop}
\label{prop:ChartQ}
An element $u\in \Sym_{P,Q}$ determines a symmetric bilinear form $q\in \mathcal{Q}(P)$ defined for $\mathrm{v},\mathrm{w}\in P$ as:
$$
q(\mathvec{v},\mathvec{w})=\omega\left(\mathvec{v}, u(\mathvec{w})\right).$$

This defines an identification of $\Sym_{P,Q}$ and $\mathcal{Q}(P)$. Moreover $\Ker(u)$ is singular for $q$.

\end{prop}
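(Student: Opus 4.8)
The plan is to establish three things in sequence: that $q$ is a well-defined element of $\mathcal{Q}(P)$, that the assignment $\Phi\colon u\mapsto q$ is a linear isomorphism from $\Sym_{P,Q}$ onto $\mathcal{Q}(P)$, and finally that $\Ker(u)$ is singular for $q$.

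First I would check that $q\in\mathcal{Q}(P)$. Bilinearity is immediate from the bilinearity of $\omega$ and the linearity of $u$, and
$$q(\mathvec{v},\mathvec{w})-q(\mathvec{w},\mathvec{v})=\omega\left(\mathvec{v},u(\mathvec{w})\right)-\omega\left(\mathvec{w},u(\mathvec{v})\right)$$
vanishes for all $\mathvec{v},\mathvec{w}\in P$ exactly because $u$ lies in $\Sym_{P,Q}$; so $q$ is symmetric and $\Phi$ is a well-defined map, clearly linear in $u$.

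Next comes the identification. The crucial point is that $\omega$ restricts to a nondegenerate pairing $P\times Q\to\R$: if $\mathvec{v}\in P$ pairs trivially with all of $Q$ then $\mathvec{v}\in Q^\perp=Q$ (since $Q$ is Lagrangian), hence $\mathvec{v}\in P\cap Q=\{0\}$ by transversality, and symmetrically on the $Q$ side. This pairing gives a linear isomorphism $\psi\colon Q\to P^*$, $\psi(\mathvec{q})(\mathvec{v})=\omega(\mathvec{v},\mathvec{q})$, hence a linear isomorphism $u\mapsto\psi\circ u$ from $\Hom(P,Q)$ onto $\Hom(P,P^*)$, i.e. onto the space of all bilinear forms on $P$; under it $u$ is sent to the form $(\mathvec{v},\mathvec{w})\mapsto\omega(\mathvec{v},u(\mathvec{w}))=q(\mathvec{v},\mathvec{w})$, which is $\Phi(u)$. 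By the symmetry computation above, $u$ is symmetric if and only if $q$ is, so $\Phi$ restricts to a linear isomorphism $\Sym_{P,Q}\to\mathcal{Q}(P)$. (As a sanity check both spaces have dimension $\frac{n(n+1)}{2}$, and $\Phi$ is injective because $q=0$ forces $u(\mathvec{w})\in P^\perp\cap Q=P\cap Q=\{0\}$ for all $\mathvec{w}$.)

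Finally, $\Ker(u)$ is singular for $q$ with no further work: for $\mathvec{v}\in P$ and $\mathvec{w}\in\Ker(u)$ one has $q(\mathvec{v},\mathvec{w})=\omega(\mathvec{v},u(\mathvec{w}))=\omega(\mathvec{v},0)=0$, which is precisely the defining condition of Definition \ref{defn:SingularSubspace}. I do not expect any real obstacle here; the only point that is not pure bookkeeping is the nondegeneracy of $\omega|_{P\times Q}$, which is the familiar fact that two transverse Lagrangians are put in duality by the symplectic form.
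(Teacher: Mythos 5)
Your proof is correct and follows essentially the same route as the paper: the symplectic form puts the transverse Lagrangians $P$ and $Q$ in duality, identifying $\Sym_{P,Q}$ with $\mathcal{Q}(P)$, and the singularity of $\Ker(u)$ is a one-line computation. You merely spell out the nondegeneracy of the pairing $\omega|_{P\times Q}$ (via $Q^\perp=Q$ and $P\cap Q=\{0\}$), which the paper asserts without proof; this is a welcome but not substantively different addition.
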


%
%
%

This identification also defines linear charts $U_Q\simeq \mathcal{Q}(P)$.

\begin{defn}
For $R\in U_Q$, define $q^R_{P,Q}\in \mathcal{Q}(P)$ as the following symmetric bilinear form on $P$:
$$q^R_{P,Q}(\mathvec{v},\mathvec{w})=\omega\left(\mathvec{v}, u^R_{P,Q}(\mathvec{w})\right).$$

\end{defn}

An invariant that classifies orbit of triples of pairwise transverse Lagrangians up to the action of $\Sp(2n,\R)$ is called the Maslov index \cite{Burger2005MaximalRO}. We will be only interested by triples with maximal Maslov index, so we will only define the notion of maximal triples of Lagrangians. For a vector space $V$, let $\mathcal{Q}^+(V)$ denote the open cone of scalar products in the space of symmetric bilinear forms $\mathcal{Q}(V)$.

\begin{defn}
\label{defn:MaximalTriplet}
Let $(P,R,Q)$ be three pairwise transverse Lagrangians in $\R^{2n}$. This triple is called \emph{maximal} if the symmetric bilinear form $q^{R}_{P,Q}$ is in $\mathcal{Q}(P)^+$, \ie is a scalar product.
\end{defn}

A triple $(P,R,Q)$ is maximal in this sense if and only if its Maslov index is maximal, \ie if its Maslov index is equal to $n$ (see for instance \cite{Burger_2017} Lemma 2.10). 

\begin{rem}
\label{rem:MaximalStable}
The signature of $q^R_{P,Q}$ is locally constant on the space of triples of pairwise transverse Lagrangians in $\R^{2n}$. Hence the space of maximal triples of Lagrangians $(P,R,Q)$ forms a connected component of this space. 
\end{rem}

Let us fix an orientation of $\partial \Gamma_g$, \ie a connected component of the space of distinct triples in $\partial \Gamma_g$ that we will call \emph{positive triples}. The \emph{Toledo invariant} of a representation $\rho:\Gamma_g \to \Sp(2n,\R)$ of a surface group $\Gamma_g$ is an integer $T_\rho$ that depends only on the connected component of $\Hom(\Gamma_g,\Sp(2n,\R))$ in which $\rho$ lies. This invariant satisfies $ |T_\rho|\leq n(2g-2)$.  A representation has \emph{maximal} Toledo invariant when $T_\rho=n(2g-2)$ \cite{Burger2005MaximalRO}. The following characterization will be taken as a definition for the rest of the paper.

\begin{defn}
Given an orientation of $\partial \Gamma_g$, we say that a representation $\rho:\Gamma_g\to \Sp(2n,\R)$ is \emph{maximal} if it is $\{n\}$-Anosov and for every positive triple of distinct points $x,y,z$ in $\partial \Gamma_g$, the triple $(x^n_\rho,y^n_\rho,z^n_\rho)$ is maximal, in the sense of Definition \ref{defn:MaximalTriplet}.

\end{defn}

Maximal representations in this sense are exactly representations with maximal Toledo invariant: any representation $\rho$ with maximal Toledo invariant is $\{n\}$-Anosov (\cite{Burger2005MaximalRO}, Theorem 6.1), and its boundary map sends positive triples to maximal triples (\cite{Burger2005MaximalRO}, Theorem 7.6). Conversely any representation that admits a continuous equivariant map from $\partial \Gamma_g$ to $\mathcal{L}_n$ which sends positive triples to maximal triples  has maximal Toledo invariant (\cite{BURGER2003387}, Theorem 8) and in particular is $\lbrace n \rbrace$-Anosov. 

\medskip

An example of the boundary curve $\xi^2_{\rho_0}$ of a maximal representation $\rho_0:\Gamma_g \to \Sp(4,\R)$ is given Figure \ref{fig:Veronese}. The boundary curve which is represented is a part of the Veronese curve, which is the boundary curve of a $4$-Fuchsian representation $\rho_0$, \ie the composition of a fuchsian representation and the irreducible representation $\SL(2,\R)\to \Sp(4,\R)$. The triple $(x,y,z)$ for this picture is a positive triple in $\partial \Gamma_g$. In the picture the point $z^2_{\rho_0}$ is "at infinity".

\begin{figure}
\begin{center}
\tdplotsetmaincoords{80}{45}
\tdplotsetrotatedcoords{-90}{180}{-90}
\begin{tikzpicture}[tdplot_main_coords]

 \coordinate (O) at (0,0,0);
 
 \coneback[surface]{3}{3}{10}
  \conefront[surface]{-3}{-3}{10}
\draw plot[variable=\t,domain=-1.5:1.5,samples=73,smooth] 
(1.73205080757*\t^2,\t-\t^3,\t+\t^3);
 \fill (1.73205080757,0,2) circle (2pt);
 \conefront[surface]{3}{3}{10};

  \node[left] at (1.73205080757,0,2){$y^2_{\rho_0}$};
  \fill (0,0,0) circle (2pt);
  \node[left] at (-0.2,0,0){$x^2_{\rho_0}$};
  \coneback[surface]{-3}{-3}{10};

\end{tikzpicture}
\caption{The Veronese curve $\xi^2_{\rho_0}(\partial \Gamma_g)$ in the chart $\mathcal{Q}(x^2_{\rho_0})\simeq U_{z^2_{\rho_0}}\subset \mathcal{L}_2$ with the cone $\mathcal{Q}^+(x^2_{\rho_0})$.}
\label{fig:Veronese}
\end{center}
\end{figure}
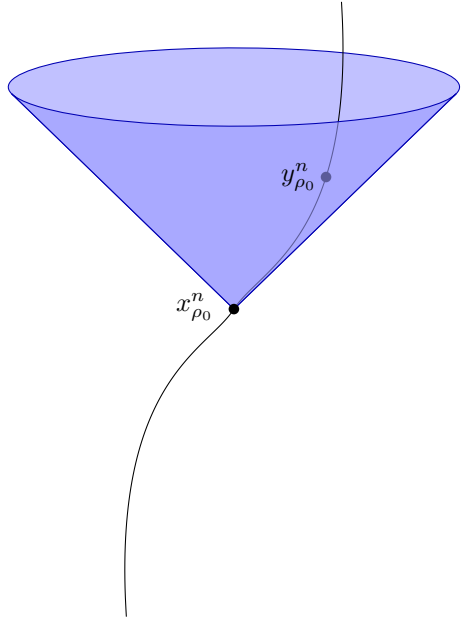

\section{Differentiability properties of the boundary maps.}
\label{sec:Smooth}
The $k$-th boundary map of an Anosov representation $\rho$ of a surface group $\Gamma_g$ has smooth image if $\rho$ satisfies the hyperconvexity property $H_k$, which we now define. Recall that we use Notation \ref{nota:NotationBoundarymaps} for the boundary maps of an Anosov representation.

\begin{defn} 

\label{defn:Hn}

Let $N\geq 2$ and $1\leq k\leq N-1$ be integers. Let $\rho:\Gamma_g\to \SL(N,\R)$ be a $\{k-1,k,k+1\}$-Anosov representation. We say that $\rho$ satisfies property $H_k$ if for all triples of distinct points $x,y,z\in \partial \Gamma_g$, the following sum is direct :
 \begin{equation}
\label{eq:PropHn} 
 \left(x_\rho^k\cap z_\rho^{N+1-k}\right) + \left(y_\rho^{k}\cap z_\rho^{N+1-k}\right) + z_\rho^{N-1-k}.
 \end{equation}
 
 If $\rho$ satisfies property $H_k$ for all $1\leq k\leq N-1$, we say that $\rho$ satisfies property $H$.

\end{defn}

These properties can be also written as follows.

\begin{lem}
\label{rem:equivalentFormHn}
For a triple of distinct points $x,y,z\in \partial \Gamma_g$, the sum \eqref{eq:PropHn} defining property $H_k$ is direct if and only if the following sum is direct:
\begin{equation}
 \label{eq:equivalentFormHn}
 x_\rho^{k} + \left(y_\rho^{k}\cap z_\rho^{N+1-k}\right) + z_\rho^{N-1-k}.
 \end{equation}
\end{lem}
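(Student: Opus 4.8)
The plan is to compare the two sums by replacing the first summand $x_\rho^k \cap z_\rho^{N+1-k}$ in \eqref{eq:PropHn} with the full subspace $x_\rho^k$ appearing in \eqref{eq:equivalentFormHn}, and to argue that this substitution does not affect directness because, once we intersect the ambient span with the last two summands, the contribution of $x_\rho^k$ must already lie inside $z_\rho^{N+1-k}$. First I would record the dimension bookkeeping: by the corollary on general position, for distinct $x,z$ we have $\dim\!\left(x_\rho^k \cap z_\rho^{N+1-k}\right) = \max(k + (N+1-k) - N, 0) = 1$, while $\dim x_\rho^k = k$; similarly $\dim\!\left(y_\rho^k \cap z_\rho^{N+1-k}\right) = 1$ and $\dim z_\rho^{N-1-k} = N-1-k$. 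Thus \eqref{eq:PropHn} being direct means a sum of a $1$-dimensional, a $1$-dimensional, and an $(N-1-k)$-dimensional space is direct, i.e.\ has dimension $N+1-k$; whereas \eqref{eq:equivalentFormHn} being direct means a sum of dimensions $k + 1 + (N-1-k) = N$ is direct.

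The key structural observation is that $z_\rho^{N-1-k} \subset z_\rho^{N+1-k}$ by compatibility (Proposition \ref{prop:TransversalityBoundaryMaps}), and $y_\rho^k \cap z_\rho^{N+1-k} \subset z_\rho^{N+1-k}$ trivially, so the second and third summands of both sums lie inside $z_\rho^{N+1-k}$, whose dimension is $N+1-k$. I would then argue as follows. Suppose \eqref{eq:equivalentFormHn} is direct. Since $x_\rho^k \cap z_\rho^{N+1-k} \subseteq x_\rho^k$, directness of the larger sum \eqref{eq:equivalentFormHn} immediately forces directness of \eqref{eq:PropHn} (a subsum of a direct sum is direct). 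Conversely, suppose \eqref{eq:PropHn} is direct. Let $W := \left(y_\rho^{k}\cap z_\rho^{N+1-k}\right) + z_\rho^{N-1-k} \subseteq z_\rho^{N+1-k}$. Directness of \eqref{eq:PropHn} gives $\left(x_\rho^k \cap z_\rho^{N+1-k}\right) \cap W = \{0\}$. I claim $x_\rho^k \cap W = \{0\}$ as well: indeed $x_\rho^k \cap W \subseteq x_\rho^k \cap z_\rho^{N+1-k}$ since $W \subseteq z_\rho^{N+1-k}$, hence $x_\rho^k \cap W \subseteq \left(x_\rho^k \cap z_\rho^{N+1-k}\right) \cap W = \{0\}$. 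It remains to see that the two summands of $W$ are themselves in direct sum and that $x_\rho^k + W$ is direct; the first is part of the hypothesis on \eqref{eq:PropHn} (the sum of its last two summands is direct), and combined with $x_\rho^k \cap W = \{0\}$ this yields directness of \eqref{eq:equivalentFormHn}.

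I expect the only delicate point to be making sure the transversality and compatibility inputs are invoked with the correct indices — in particular that $z_\rho^{N-1-k} \subset z_\rho^{N+1-k}$ and that the relevant intersection dimensions are as claimed — and that $k-1 \geq 0$ and $k+1 \leq N-1$ so that all the boundary maps $\xi_\rho^{k-1}, \xi_\rho^k, \xi_\rho^{k+1}$ (and by transversality $\xi_\rho^{N+1-k}$, $\xi_\rho^{N-1-k}$) actually exist under the $\{k-1,k,k+1\}$-Anosov hypothesis. Once the index ranges are pinned down, the argument is the purely linear-algebraic observation above: the passage between \eqref{eq:PropHn} and \eqref{eq:equivalentFormHn} is exactly the statement that a subspace $A$ and a subspace $B$ contained in a common complement-constrained space $C \supseteq A \cap C$ satisfy $A \cap B = \{0\} \iff (A \cap C) \cap B = \{0\}$ whenever $B \subseteq C$, which is immediate.
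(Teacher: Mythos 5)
Your proof is correct and follows essentially the same route as the paper: the paper's entire argument is the observation that the last two summands span a subspace of $z_\rho^{N+1-k}$, so any vector of $x_\rho^{k}$ lying in that span already lies in $x_\rho^{k}\cap z_\rho^{N+1-k}$, which is exactly your key step. The dimension bookkeeping and the remark about index ranges are harmless extras; the paper dispenses with them and simply notes (as you do) that the two-term subsum is automatically direct by transversality.
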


\begin{proof}
The transversality of the boundary maps stated in Proposition \ref{prop:TransversalityBoundaryMaps} implies that the sum 
 $\left(y_\rho^{k}\cap z_\rho^{N+1-k}\right) \oplus z_\rho^{N-1-k}$ is necessarily direct. If a vector in $x_\rho^{k}$ belongs to this sum, it also belongs to $x_\rho^k\cap z_\rho^{N+1-k}$. Hence if \eqref{eq:PropHn} is direct then \eqref{eq:equivalentFormHn} is direct. The converse is immediate since $x_\rho^k\cap z_\rho^{N+1-k}\subset x_\rho^k$.
\end{proof}

 For a $\{k-1,k,k+1\}$-Anosov representation $\rho:\Gamma_g\to\Sp(2n,\R)$, some of these properties are equivalent.
 
\begin{prop}
\label{prop:H_n-k}
Let $\rho:\Gamma_g\to\Sp(2n,\R)$ be a $\{k-1,k,k+1\}$-Anosov representation. It satisfies property $H_k$ if and only if it satisfies property $H_{2n-k}$.
\end{prop}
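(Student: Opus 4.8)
The plan is to use the symplectic duality from Lemma \ref{lem:ParticularOrthogonality} to translate property $H_k$ into property $H_{2n-k}$ directly, working with the equivalent formulation from Lemma \ref{rem:equivalentFormHn}. First I would record that since $\rho$ is $\{k-1,k,k+1\}$-Anosov into $\Sp(2n,\R)$, it is also $\{2n-k-1,2n-k,2n-k+1\}$-Anosov by Proposition \ref{prop:TransversalityBoundaryMaps}, so property $H_{2n-k}$ makes sense, and moreover for every $x\in\partial\Gamma_g$ and every relevant index $j$ we have $\left(x_\rho^j\right)^\perp = x_\rho^{2n-j}$. The orthogonal complement with respect to $\omega$ is an inclusion-reversing involution on subspaces that sends a sum of subspaces to the intersection of their orthogonals and vice versa, and it preserves dimensions complementarily.

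The key step is a dimension count combined with this duality. By Lemma \ref{rem:equivalentFormHn}, $\rho$ satisfies $H_k$ iff the sum
\begin{equation*}
x_\rho^{k} + \left(y_\rho^{k}\cap z_\rho^{2n+1-k}\right) + z_\rho^{2n-1-k}
\end{equation*}
is direct; by transversality and compatibility each of these spaces has the expected dimension, so the sum has dimension $k + 1 + (2n-1-k) = 2n$, meaning directness is equivalent to this sum being all of $\R^{2n}$, i.e. to the sum being \emph{surjective onto} $\R^{2n}$. Now I would take orthogonal complements of the three summands: using $\left(A+B+C\right)^\perp = A^\perp\cap B^\perp\cap C^\perp$ and the identities $\left(x_\rho^k\right)^\perp = x_\rho^{2n-k}$, $\left(z_\rho^{2n-1-k}\right)^\perp = z_\rho^{k+1}$, and $\left(y_\rho^k\cap z_\rho^{2n+1-k}\right)^\perp = y_\rho^{2n-k} + z_\rho^{k-1}$. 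So the sum above equals $\R^{2n}$ iff
\begin{equation*}
x_\rho^{2n-k} \cap \left(y_\rho^{2n-k} + z_\rho^{k-1}\right) \cap z_\rho^{k+1} = \{0\}.
\end{equation*}
The remaining task is to check that this last intersection condition is exactly the negation-free restatement of "the sum \eqref{eq:PropHn} for $H_{2n-k}$ is direct." Writing $m = 2n-k$, property $H_m$ asks that $x_\rho^{m}\cap z_\rho^{2n+1-m} $, $y_\rho^{m}\cap z_\rho^{2n+1-m}$ and $z_\rho^{2n-1-m}$ sum directly; since $2n+1-m = k+1$ and $2n-1-m = k-1$, and since by compatibility $x_\rho^m\cap z_\rho^{k+1}\subset z_\rho^{k+1}$ while $y_\rho^m\cap z_\rho^{k+1}$ and $z_\rho^{k-1}$ already sum directly inside $z_\rho^{k+1}$ by transversality, the failure of directness of the $H_m$-sum is precisely the existence of a nonzero vector of $x_\rho^m\cap z_\rho^{k+1}$ lying in $\left(y_\rho^m\cap z_\rho^{k+1}\right) + z_\rho^{k-1} \subset y_\rho^m + z_\rho^{k-1}$ — i.e. a nonzero vector in $x_\rho^{2n-k}\cap\left(y_\rho^{2n-k}+z_\rho^{k-1}\right)\cap z_\rho^{k+1}$. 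This closes the loop.

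The main obstacle I anticipate is the bookkeeping of the orthogonal-complement identity $\left(y_\rho^k\cap z_\rho^{2n+1-k}\right)^\perp = y_\rho^{2n-k} + z_\rho^{k-1}$: this requires knowing both that $\left(A\cap B\right)^\perp = A^\perp + B^\perp$ for a nondegenerate form and that $\left(z_\rho^{2n+1-k}\right)^\perp = z_\rho^{k-1}$, the latter being Lemma \ref{lem:ParticularOrthogonality} applied with index $2n+1-k$, which is legitimate because $\rho$ is $\{k-1\}$-Anosov hence $\{2n+1-k\}$-Anosov. One also needs to make sure all intersections and sums genuinely have their expected dimensions so that "direct" and "equals $\R^{2n}$" are interchangeable on both sides; this follows from the Corollary on general position after Proposition \ref{prop:TransversalityBoundaryMaps} together with the compatibility clause. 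Once these linear-algebra facts are in place, the proof is a symmetric two-line dualization with no further input, and in particular it shows the equivalence is an honest "if and only if" with the roles of $k$ and $2n-k$ interchangeable.
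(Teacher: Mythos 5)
Your proposal is correct and follows essentially the same route as the paper: pass to the equivalent formulation of Lemma \ref{rem:equivalentFormHn}, dualize with respect to $\omega$ using Lemma \ref{lem:ParticularOrthogonality}, apply the modular law $\left(y_\rho^{2n-k}+ z_\rho^{k-1}\right)\cap z_\rho^{k+1}=\left(y_\rho^{2n-k}\cap z_\rho^{k+1}\right)\oplus z_\rho^{k-1}$, and use the dimension count to trade ``direct'' for ``equals $\R^{2n}$.'' The only difference is cosmetic (you dualize the sum into a triple intersection, whereas the paper dualizes the trivial-intersection statement into a spanning statement), so no further comment is needed.
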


\begin{proof}
Let $x,y,z\in \partial \Gamma_g$ be distinct points. Let us assume that the sum \eqref{eq:equivalentFormHn} is direct. Hence :
$$x_\rho^{k}\cap \left(\left(y_\rho^{k}\cap z_\rho^{2n+1-k}\right)\oplus z_\rho^{2n-1-k}\right)=\{0\}.$$
By considering the orthogonal of this set with respect to the bilinear form $\omega$, and because of Lemma \ref{lem:ParticularOrthogonality}, one has:
  $$x_\rho^{2n-k}+ \left(\left(y_\rho^{2n-k}\oplus z_\rho^{k-1}\right) \cap z_\rho^{k+1}\right)=\R^{2n}.$$
 
 Since $z_\rho^{k-1}\subset z_\rho^{k+1}$, then $\left(y_\rho^{2n-k}\oplus z_\rho^{k-1}\right) \cap z_\rho^{k+1}=\left(y_\rho^{2n-k}\cap z_\rho^{k+1}\right)\oplus z_\rho^{k-1}$. The following sum is equal to $\R^{2n}$ and the sum of the dimensions on the summands is equal to $2n$, so it is direct: 
 $$x_\rho^{2n-k}+ \left(\left(y_\rho^{2n-k} \cap z_\rho^{k+1}\right)\oplus z_\rho^{k-1}\right)=\R^{2n}.$$
 
This means that this sum is direct for all distinct $x,y,z$, and hence property $H_{2n-k}$ is satisfied. The converse implication is immediate by setting $k'=2n-k$.
\end{proof}

For any $x,y,z\in \partial \Gamma_g$ distinct and any $\{n,n-1\}$-Anosov representation $\rho:\Gamma_g\to \Sp(2n,\R)$, the following subspace is a hyperplane in $x^n_\rho$:
$$(y^{n-1}_\rho\oplus z^n_\rho)\cap x^n_\rho.$$
Indeed $y^{n-1}_\rho\oplus z^n_\rho$ is an hyperplane of $\R^{2n}$ that cannot contain $x^n_\rho$ since $x^n_\rho\oplus  z^n_\rho=\R^{2n}$. This hyperplane can be seen as the image of $y_\rho^{n-1}$ by the linear projection onto $x^n_\rho$ in $\R^{2n}$ associated with the direct sum $\R^{2n}=x^n_\rho\oplus z^n_\rho$.

\medskip

The transversality of boundary maps and property $H_n$ imply the following transversality properties. These properties will be used in the case $n=2$ to prove Lemma \ref{lem:Order4Points} and Theorem \ref{thm:H2toH1}.

\begin{lem}
\label{lem:DistinctPoints}
Let $\rho:\Gamma_g \to \Sp(2n,\mathbb{R})$ be a $\{n-1,n\}$-Anosov representation. Let $x,y,z\in \partial \Gamma_g$ be three distinct points. Then :

\begin{itemize}
\item[(i)] $x_\rho^{n-1}$ and $ z_\rho^{n+1}\cap x_\rho^n$ are transverse;
\item[(ii)]  $x_\rho^{n-1}$ and $ y_\rho^{n+1}\cap x_\rho^n$ are transverse; 
\item[(iii)] $(y_\rho^{n-1}\oplus z^n_\rho)\cap x^n_\rho$ and $z_\rho^{n+1}\cap x_\rho^n$ are transverse;
\item[(iv)] if moreover $\rho$ satisfies property $H_n$, then $(y_\rho^{n-1}\oplus z^n_\rho)\cap x^n_\rho$ and $ y_\rho^{n+1}\cap x_\rho^n$ are transverse.
\end{itemize}
\end{lem}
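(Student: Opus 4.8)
The plan is to set up everything inside the $n$-dimensional space $x_\rho^n$, where all four relevant subspaces live, and reduce each transversality statement to a dimension count that follows from the already-established transversality and compatibility of the boundary maps (Proposition \ref{prop:TransversalityBoundaryMaps} and its corollary), together with property $H_n$ in case (iv). Note first that inside $x_\rho^n$ the subspaces $z_\rho^{n+1}\cap x_\rho^n$ and $y_\rho^{n+1}\cap x_\rho^n$ are hyperplanes: by the corollary to Proposition \ref{prop:TransversalityBoundaryMaps}, $\dim(x_\rho^n\cap z_\rho^{n+1}) = \max(n+(n+1)-2n,0)=1$ is wrong in the ambient sense, but within the $2n$-dimensional space $\dim(x_\rho^n\cap z_\rho^{n+1})=n+(n+1)-2n=1$... — more carefully, $x_\rho^n$ and $z_\rho^{n+1}$ are subspaces of $\R^{2n}$ with $\dim x_\rho^n+\dim z_\rho^{n+1}=2n+1$, and their sum is all of $\R^{2n}$ since $z_\rho^{n-1}\subset z_\rho^{n+1}$ and $x_\rho^n\oplus z_\rho^{n+1}\supset x_\rho^n\oplus z_\rho^{n-1}$ which already has dimension $2n-1$; pushing to the full $n+1$, transversality of $\xi^n$ and $\xi^n$ gives $x_\rho^n\oplus z_\rho^n=\R^{2n}$, so $x_\rho^n+z_\rho^{n+1}=\R^{2n}$ and hence $\dim(x_\rho^n\cap z_\rho^{n+1})=1$. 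That is the wrong count — so instead I will work with the hyperplane $z_\rho^{n+1}\cap x_\rho^n$ directly: since $z_\rho^{n+1}$ has codimension $n-1$ and meets $x_\rho^n$ transversely in $\R^{2n}$ (again because $x_\rho^n\oplus z_\rho^{n-1}$ already has the full codimension drop available), $z_\rho^{n+1}\cap x_\rho^n$ has dimension $n-(n-1)=1$. I will therefore restate (i)–(iv) as: a line and a hyperplane in $x_\rho^n$ are transverse, i.e. the line is not contained in the hyperplane.

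For (i): the hyperplane of $x_\rho^n$ in question is $(z_\rho^{n-1}\oplus x_\rho^{n})\cap\dots$ — rather, the standard hyperplane description used in the paragraph before Lemma \ref{lem:DistinctPoints} shows $(y_\rho^{n-1}\oplus z_\rho^n)\cap x_\rho^n$ is a hyperplane of $x_\rho^n$. I will apply the \emph{same} argument with the roles arranged so that the hyperplane is $z_\rho^{n+1}\cap x_\rho^n$... no: $z_\rho^{n+1}\cap x_\rho^n$ has dimension $1$, not $n-1$. So the correct reading of (i) is that $x_\rho^{n-1}$ (dimension $n-1$) and the \emph{line} $z_\rho^{n+1}\cap x_\rho^n$ are transverse inside $x_\rho^n$: their dimensions sum to $n$, so transversality means $x_\rho^{n-1}\cap z_\rho^{n+1}=\{0\}$. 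This I would get from $x_\rho^{n-1}\subset x_\rho^n$ (compatibility) and transversality of $\xi^{n-1}$ with $\xi^{n+1}$: since $\dim x_\rho^{n-1}+\dim z_\rho^{n+1}=2n$ and they are transverse, $x_\rho^{n-1}\cap z_\rho^{n+1}=\{0\}$, which is exactly what Proposition \ref{prop:TransversalityBoundaryMaps} grants (with $k=n-1$, $N-k=n+1$). Statement (ii) is literally the same with $y$ in place of $z$. For (iii), $(y_\rho^{n-1}\oplus z_\rho^n)\cap x_\rho^n$ is a hyperplane of $x_\rho^n$ and $z_\rho^{n+1}\cap x_\rho^n$ is a line in $x_\rho^n$; transversality means the line is not in the hyperplane, equivalently $x_\rho^n = \big((y_\rho^{n-1}\oplus z_\rho^n)\cap x_\rho^n\big) \oplus \big(z_\rho^{n+1}\cap x_\rho^n\big)$. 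I would prove this by passing to $\R^{2n}$: $(y_\rho^{n-1}\oplus z_\rho^n)\cap x_\rho^n$ equals the image of $y_\rho^{n-1}$ under the projection $\pi$ onto $x_\rho^n$ along $z_\rho^n$ (as noted in the text), so a nonzero vector $v$ in the intersection of the hyperplane and the line can be written $v=\pi(w)$ with $w\in y_\rho^{n-1}$ and also $v\in z_\rho^{n+1}$; then $w - v\in z_\rho^n\subset z_\rho^{n+1}$ forces $w\in z_\rho^{n+1}$, so $w\in y_\rho^{n-1}\cap z_\rho^{n+1}=\{0\}$ by (i)/transversality, contradiction.

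For (iv), which is the one genuinely using $H_n$ and which I expect to be the main obstacle: here the hyperplane is again $(y_\rho^{n-1}\oplus z_\rho^n)\cap x_\rho^n$ and the line is $y_\rho^{n+1}\cap x_\rho^n$; I must show the line avoids the hyperplane. Suppose not: a nonzero $v\in x_\rho^n$ lies in both $y_\rho^{n+1}$ and in $y_\rho^{n-1}\oplus z_\rho^n$, so $v = w + u$ with $w\in y_\rho^{n-1}$, $u\in z_\rho^n$. The strategy is to feed this into property $H_n$ in its equivalent form \eqref{eq:equivalentFormHn}: the sum $x_\rho^n + (y_\rho^n\cap z_\rho^{n+1}) + z_\rho^{n-1}$ is direct. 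I would try to manufacture from $v$ a nonzero element common to $x_\rho^n$ and to $(y_\rho^n\cap z_\rho^{n+1})\oplus z_\rho^{n-1}$, contradicting directness — the key being to use $v\in y_\rho^{n+1}$ together with $v\in x_\rho^n\subset$ (something transverse) to locate $v$, or its $\pi$-image, appropriately; the presence of $y_\rho^{n-1}\subset y_\rho^n$ and the hyperplane $y_\rho^n\cap z_\rho^{n+1}$ inside $y_\rho^{n+1}$ should let me decompose. Concretely, since $v\in y_\rho^{n+1}$ and $v=w+u$ with $w\in y_\rho^{n-1}\subset y_\rho^{n+1}$, we get $u\in y_\rho^{n+1}\cap z_\rho^n$; but $y_\rho^{n+1}\cap z_\rho^n$ has dimension $1$ (same count as before), so $u$ spans it, and $u\in z_\rho^n$ with $u\in y_\rho^{n+1}$. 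Now I want to show $u\in y_\rho^n$: this would follow if $y_\rho^{n+1}\cap z_\rho^n\subset y_\rho^n$, which need not hold in general, and this is exactly where $H_n$ must enter — I expect to instead argue that $u\in z_\rho^n = (y_\rho^n\cap z_\rho^{n+1})\oplus (z_\rho^n\cap\text{something})$... Here I anticipate the real work: one must use $H_n$ (perhaps via Proposition \ref{prop:H_n-k} replacing $H_n$ by $H_n$ itself, or via the differentiability consequences from Section \ref{sec:Smooth} if a purely linear argument is insufficient) to rule out $v = w+u$ with $w\in y_\rho^{n-1}$ and $u$ spanning $y_\rho^{n+1}\cap z_\rho^n$, by showing such a configuration would violate directness of $x_\rho^n \oplus (y_\rho^n\cap z_\rho^{n+1}) \oplus z_\rho^{n-1}$ after projecting along $z_\rho^n$ or intersecting with $z_\rho^{n+1}$. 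Making this last reduction clean — tracking which of $v, w, u$ land in $z_\rho^{n+1}$ and in $y_\rho^n$ — is the heart of the proof, and I would organize it as a short sequence of ``intersect with $z_\rho^{n+1}$, then apply $H_n$'' steps, each justified by the dimension counts from the corollary to Proposition \ref{prop:TransversalityBoundaryMaps}.
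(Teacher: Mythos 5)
Parts (i)--(iii) of your argument are correct and essentially identical to the paper's: (i) and (ii) follow from transversality of $\xi^{n-1}_\rho$ and $\xi^{n+1}_\rho$ at distinct points, and your projection argument for (iii) (write $v=\pi(w)$ with $w\in y^{n-1}_\rho$, note $w-v\in z^n_\rho\subset z^{n+1}_\rho$, conclude $w\in y^{n-1}_\rho\cap z^{n+1}_\rho=\{0\}$) is the paper's argument. The dimension bookkeeping in your first paragraph is noisy but lands in the right place.

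Part (iv), however, is not a proof: you set up the right configuration ($v\in x^n_\rho\cap y^{n+1}_\rho$ nonzero with $v=w+u$, $w\in y^{n-1}_\rho$, $u\in z^n_\rho$) and then explicitly defer ``the real work'' of deriving a contradiction. The gap comes from insisting on the instance of property $H_n$ attached to the triple $(x,y,z)$, i.e.\ directness of $x^n_\rho+(y^n_\rho\cap z^{n+1}_\rho)+z^{n-1}_\rho$, which does not accommodate your vectors (you correctly notice that $u\in y^{n+1}_\rho\cap z^n_\rho$ need not lie in $y^n_\rho$, and nothing places $w$ in $z^{n+1}_\rho$). The missing idea is simply that $H_n$ is assumed for \emph{all} triples of distinct points, so you may apply \eqref{eq:equivalentFormHn} to the permuted triple $(z,x,y)$: this says that the sum $z^n_\rho+\left(x^n_\rho\cap y^{n+1}_\rho\right)+y^{n-1}_\rho$ is direct. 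Your relation $v-w-u=0$, with $v\in x^n_\rho\cap y^{n+1}_\rho$ nonzero, $w\in y^{n-1}_\rho$ and $u\in z^n_\rho$, is then a nontrivial linear relation among the three summands, an immediate contradiction. No projection along $z^n_\rho$, no intersection with $z^{n+1}_\rho$, and no appeal to the differentiability results of Section \ref{sec:Smooth} is needed; this is exactly how the paper concludes.
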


\begin{proof}
The transversality of the boundary maps between $x$ and $z$ implies that  $x_\rho^{n-1}$ and $z_\rho^{n+1}$ have trivial intersection so $x_\rho^{n-1}$ and $ z_\rho^{n+1}\cap x_\rho^{n-1}$ intersect trivially. The same argument shows that, $x^{n-1}_\rho$ and $ y_\rho^{n+1}\cap x_\rho^n$ are disjoint.

\medskip

The transversality of the boundary maps between $y$ and $z$ implies that $y_\rho^{n-1}$ and $z_\rho^{n+1}$ have trivial intersection. In particular let $\mathvec{v}\in  (y_\rho^{n-1}\oplus z^n_\rho)\cap x^n_\rho$ and $\mathvec{w}\in z^n_\rho$ be such that $\mathvec{v}+\mathvec{w}\in y_\rho^{n-1}$. Suppose that moreover $\mathvec{v}\in z_\rho^{n+1}$. Then $\mathvec{v}+\mathvec{w}\in y_\rho^{n-1} \cap z_\rho^{n+1}$ since $z^n_\rho\subset z_\rho^{n+1}$. Hence $\mathvec{v}+\mathvec{w}=0$, so $\mathrm{v}\in x^n_\rho\cap z^n_\rho$. Therefore $\mathrm{v}=0$. As a conclusion $(y_\rho^{n-1}\oplus z^n_\rho)\cap x^n_\rho$ and $ z_\rho^{n+1}\cap x_\rho^n$ are disjoint.
 
\medskip

Finally property $H_n$ implies that if we replace $(x,y,z)$ by $(z,x,y)$ in \eqref{eq:equivalentFormHn}, the sum is direct, and hence $x_\rho^n\cap y_\rho^{n+1}$ intersects trivially $z^n_\rho\oplus y_\rho^{n-1}$. Therefore $(y_\rho^{n-1}\oplus z^n_\rho)\cap x^n_\rho$ and $ y_\rho^{n+1}\cap x_\rho^n$ are disjoint.
\end{proof}

The main tool that we are going to use in Sections \ref{sec:MaximalityHn} and \ref{sec:H2toH1} is the following result from Pozzetti, Sambarino and Wienhard \cite{pozzetti2020conformality}.

\begin{thm}[{\cite{pozzetti2020conformality}, Theorem 4.2}]
\label{thm:SmoothBoundary}
Let $\rho:\Gamma_g \to \SL(N,\mathbb{R})$ be a $\{k-1,k,k+1\}$-Anosov representation. If $\rho$ satisfies property $H_k$ then the map $\xi_\rho^k: x\mapsto x^k_\rho$ has $\mathcal{C}^1$ image, \ie $\xi^k_\rho(\partial \Gamma_g)\subset \Grass(k,N)$ is a $1$-dimensional $\mathcal{C}^1$ submanifold.

At the point $x_\rho^k$ this $1$-dimensional submanifold of $\Grass(k,N)$ is tangent to the curve consisting of spaces containing $x_\rho^{k-1}$ and contained in $x_\rho^{k+1}$.
\end{thm}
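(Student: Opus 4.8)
The plan is to produce, at each $x\in\partial\Gamma_g$, a one-sided derivative of $\xi^k_\rho$ spanning the prescribed line, and then to check that it depends continuously on $x$. Recall that the tangent space to $\Grass(k,N)$ at a point $W$ is canonically $\Hom(W,\R^N/W)$. Writing $W=x^k_\rho$, the set of $k$-planes $V$ with $x^{k-1}_\rho\subseteq V\subseteq x^{k+1}_\rho$ is a smooth rational curve through $W$, and its tangent line there is
$$L_x := \{\phi\in\Hom(W,\R^N/W) : \phi|_{x^{k-1}_\rho}=0,\ \im\phi\subseteq x^{k+1}_\rho/W\},$$
which is one dimensional, since such a $\phi$ is determined by the image in the line $x^{k+1}_\rho/x^k_\rho$ of any vector completing $x^{k-1}_\rho$ to $x^k_\rho$. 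Using Proposition \ref{prop:TransversalityBoundaryMaps} and its corollary on general position I may fix an auxiliary point $z\neq x$, use the transverse complement $z^{N-k}_\rho$ of $x^k_\rho$ to split $\R^N=x^k_\rho\oplus z^{N-k}_\rho$, and realize the boundary curve near $x$ as a graph $y\mapsto f_y\in\Hom(W,\R^N/W)$ with $f_x=0$. Showing that the image is $\mathcal C^1$ with tangent $L_x$ then amounts to showing that $f_y$ has a genuine derivative at $y=x$ and that this derivative lies in $L_x$.

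The next step identifies the limiting direction of $f_y$ as $y\to x$. The compatibility $y^k_\rho\subseteq y^{k+1}_\rho$ together with the continuity of $\xi^{k+1}_\rho$ (Theorem \ref{thm:CaracterizationBoundaryMaps}) forces the escaping part of $y^k_\rho$ into $x^{k+1}_\rho$ in the limit, so every subsequential limit of $f_y/\|f_y\|$ has image contained in $x^{k+1}_\rho/W$. Dually, from $y^{k-1}_\rho\subseteq y^k_\rho$ and the continuity of $\xi^{k-1}_\rho$, the restriction $f_y|_{x^{k-1}_\rho}$ records the motion of the $(k-1)$-subflag, which moves to leading order \emph{inside} $x^k_\rho$ (this is the index-$(k-1)$ analogue of the statement being proved, and reflects that the representation is $\{k-1\}$-Anosov); hence its contribution to $\R^N/x^k_\rho$ is subleading and $f_y|_{x^{k-1}_\rho}/\|f_y\|\to 0$. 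Together these two facts squeeze every limiting direction of $f_y$ into $L_x$. Property $H_k$ is what keeps this squeezing nondegenerate: by Lemma \ref{rem:equivalentFormHn} the sum $x^k_\rho\oplus(y^k_\rho\cap z^{N+1-k}_\rho)\oplus z^{N-1-k}_\rho=\R^N$ is direct, and this directness is uniform in $y$, so the escaping line $y^k_\rho\cap z^{N+1-k}_\rho$ never collapses and its image in $\R^N/W$ stays bounded away from degeneracy; consequently $f_y$ has a well-defined limiting direction, necessarily $L_x$.

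To upgrade this statement about secant directions to a genuine $\mathcal C^1$ structure, I would use the Anosov dynamics to make the estimate uniform. Writing a point $y$ near $x$ as $y=g\cdot y_0$ with $g\in\Gamma_g$ whose attracting fixed point $g^+$ is close to $x$, and pulling the local graph back by $\rho(g)^{-1}$, the decay of the gaps $\sigma_{k-1}/\sigma_k$ and $\sigma_k/\sigma_{k+1}$ like $e^{-\alpha|g|_w}$ translates into a quantitative bound on the difference between $f_y$ and its linear part, yielding a remainder that is $o(\mathrm{dist}(x,y))$ uniformly in the basepoint; continuity of $x\mapsto L_x$ then follows from continuity of $\xi^{k-1}_\rho$ and $\xi^{k+1}_\rho$. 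The main obstacle is precisely this last quantitative and \emph{uniform} step: identifying the candidate tangent $L_x$ is essentially linear algebra together with continuity of the neighbouring boundary maps, but proving that the secant directions converge to $L_x$ fast enough, and uniformly over all basepoints, to yield a genuine $\mathcal C^1$ submanifold rather than merely a tangent line existing pointwise is the heart of the matter. It is exactly here that property $H_k$ is indispensable, since it provides the uniform transversality that turns the pointwise Anosov gap estimates into uniform bounds over all of $\partial\Gamma_g$.
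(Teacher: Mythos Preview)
This theorem is not proved in the paper at all: it is quoted from \cite{beyrer2021collar} (and attributed to Pozzetti, Sambarino and Wienhard \cite{pozzetti2020conformality}) and used as a black box. So there is no ``paper's own proof'' to compare your proposal against; the paper simply invokes the result as a tool in Sections \ref{sec:MaximalityHn} and \ref{sec:H2toH1}.

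As for your sketch on its own merits: the setup is correct --- the identification of $T_W\Grass(k,N)\cong\Hom(W,\R^N/W)$, the description of $L_x$, and the graph chart via a transverse $z^{N-k}_\rho$ are all the right framework, and this is indeed how the proof in \cite{pozzetti2020conformality} begins. But the argument has a genuine gap precisely where you flag the ``main obstacle'', and also earlier. In your second paragraph you assert that $f_y|_{x^{k-1}_\rho}/\|f_y\|\to 0$ because the $(k-1)$-subflag ``moves to leading order inside $x^k_\rho$''; this is not a consequence of continuity of $\xi^{k-1}_\rho$ alone, it is a statement about \emph{rates}, essentially the $H_{k-1}$/differentiability statement one level down, and you have not established it (nor does the hypothesis give you $H_{k-1}$). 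Likewise, saying the directness in $H_k$ is ``uniform in $y$'' is not a precise statement and does not by itself yield convergence of secant directions. The actual proof in \cite{pozzetti2020conformality} does not argue by comparing rates of neighbouring flags in this way; it uses the cocycle/dominated-splitting formulation of the Anosov condition together with property $H_k$ to control the secants directly and uniformly. Your third paragraph gestures at this but does not carry it out, so the proposal remains an outline rather than a proof.
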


We will be interested in the regularity of the boundary curve $\xi^n_\rho$, whose image lies in the space of Lagrangians $\mathcal{L}_n$ when $\rho(\Gamma_g)\subset \Sp(2n,\R)$. Once an $\{n\}$-Anosov representation $\rho$ has been fixed, given $3$ points $x,y,z\in \partial \Gamma_g$ with $x,y\neq z$ we will write for simplicity :
$$\Sym_{x,z}:=\Sym_{x_\rho^n,z_\rho^{n}}\;\;,\;\; u_{x,z}^y:=u_{x_\rho^n,z_\rho^{n}}^{y^n_\rho}\in \Sym_{x,y}\;\;,\;\;q^y_{x,z}=q_{x_\rho^n,z_\rho^{n}}^{y^n_\rho}\in \mathcal{Q}(x^n_\rho).$$

 Let us rephrase Theorem \ref{thm:SmoothBoundary} in the charts $U_Q\simeq \Sym_{x,z}$ of the space of Lagrangians $\mathcal{L}_n$.

\begin{lem}
\label{lem:DescriptionDerivativeBondaryMap1}
Let $\rho:\Gamma_g\to \Sp(2n,\R)$ be an $\{n-1,n\}$-Anosov representation that satisfies property $H_{n}$. Let $x,z\in \partial \Gamma_g$ be distinct points. 
 For $y\neq z$, the tangent space at $u^y_{x,z}$ to the image of the map:
 $$w\in \partial\Gamma_g\setminus\{z\}\mapsto u^w_{x,z}$$
 is the affine line of $\Sym_{x,z}$ passing through $u^y_{x,z}$ and directed by the vector line of elements $\dot{u}\in \Sym_{x,y}$ such that one of the following equivalent statements holds:
\begin{itemize}
\item[(i)] $\left(y^{n-1}_\rho\oplus z^n_\rho\right)\cap x^n_\rho\subset\Ker(\dot{u}),$
\item[(ii)] $\im(\dot{u})\subset y^{n+1}_\rho\cap z^n_\rho.$
\end{itemize}
In particular such an element $\dot{u}\in \Sym_{x,z}$ must have rank $1$. 
\end{lem}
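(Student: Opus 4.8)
The plan is to read the curve $w\mapsto u^w_{x,z}$ as the $n$-th boundary curve $\xi^n_\rho$ seen through the linear chart $\phi\colon U_{z^n_\rho}\to\Sym_{x,z}$, $R\mapsto u^R_{x,z}$, of Proposition~\ref{prop:ChartSym}, and then to invoke Theorem~\ref{thm:SmoothBoundary}. Note first that $\rho$ is $\{n-1,n,n+1\}$-Anosov (Proposition~\ref{prop:TransversalityBoundaryMaps}). Theorem~\ref{thm:SmoothBoundary} then gives that $\xi^n_\rho(\partial\Gamma_g)$ is a $\mathcal C^1$ submanifold of $\mathcal L_n$ whose tangent at $y^n_\rho$ is the tangent line at $y^n_\rho$ to the flag curve $\mathcal F=\{L\in\mathcal L_n:\ y^{n-1}_\rho\subset L\subset y^{n+1}_\rho\}$. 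Since $\phi$ is a diffeomorphism onto an open subset of the affine space $\Sym_{x,z}$, the image of $w\mapsto u^w_{x,z}$ is a $\mathcal C^1$ curve, and its tangent at $u^y_{x,z}$ is the affine line $u^y_{x,z}+\mathrm d\phi_{y^n_\rho}(T_{y^n_\rho}\mathcal F)$. So everything reduces to computing the line $\mathrm d\phi_{y^n_\rho}(T_{y^n_\rho}\mathcal F)\subset\Sym_{x,z}$.

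To compute it I would differentiate an explicit curve in $\mathcal F$ through $y^n_\rho$. Fix $e_0\in y^n_\rho\setminus y^{n-1}_\rho$ and $f\in y^{n+1}_\rho$, and set $R_t=y^{n-1}_\rho+\langle e_0+tf\rangle$; each $R_t$ is Lagrangian because $y^{n+1}_\rho=(y^{n-1}_\rho)^\perp$ (Lemma~\ref{lem:ParticularOrthogonality}) and is transverse to $z^n_\rho$ for small $|t|$, and as $f$ ranges over $y^{n+1}_\rho$ modulo $y^n_\rho$ these curves realise all tangent directions of $\mathcal F$ at $y^n_\rho$. Write $R_t=\mathrm{graph}(u_t)$ with $u_t=u^{R_t}_{x,z}\in\Sym_{x,z}$ and $\dot u=\tfrac{\mathrm d}{\mathrm dt}\big|_{0}u_t\in\Sym_{x,z}$. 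Differentiating the incidence $\mathvec v+u_t(\mathvec v)\in R_t$ at $t=0$ and using $y^n_\rho\cap z^n_\rho=\{0\}$ should give that $\dot u(\mathvec v)=0$ exactly when $\mathvec v+u^y_{x,z}(\mathvec v)\in y^{n-1}_\rho$, so that
\[
\Ker(\dot u)=\bigl\{\mathvec v\in x^n_\rho:\ \mathvec v+u^y_{x,z}(\mathvec v)\in y^{n-1}_\rho\bigr\}=\bigl(y^{n-1}_\rho\oplus z^n_\rho\bigr)\cap x^n_\rho,
\]
the second equality being an elementary verification using only that $y^n_\rho=\mathrm{graph}(u^y_{x,z})$ and $y^n_\rho\cap z^n_\rho=\{0\}$. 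Hence $\mathrm d\phi_{y^n_\rho}(T_{y^n_\rho}\mathcal F)$ is exactly the set of $\dot u\in\Sym_{x,z}$ satisfying condition~(i), and since $(y^{n-1}_\rho\oplus z^n_\rho)\cap x^n_\rho$ is a hyperplane of $x^n_\rho$ (as recalled just before the statement) this set is a line.

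It remains to prove (i)$\iff$(ii) and the rank assertion. As $\dot u$ is symmetric (Proposition~\ref{prop:ChartSym}), one has $\Ker(\dot u)=(\im\dot u)^\perp\cap x^n_\rho$ (using $x^n_\rho\cap z^n_\rho=\{0\}$ and $(x^n_\rho)^\perp=x^n_\rho$), so~(i) reads $(y^{n-1}_\rho\oplus z^n_\rho)\cap x^n_\rho\subset(\im\dot u)^\perp$, equivalently $\im\dot u\subset\bigl((y^{n-1}_\rho\oplus z^n_\rho)\cap x^n_\rho\bigr)^\perp$; by Lemma~\ref{lem:ParticularOrthogonality} the right-hand side equals $(y^{n+1}_\rho\cap z^n_\rho)+x^n_\rho$, and intersecting with $z^n_\rho\supset\im\dot u$ and using the modular law together with $x^n_\rho\cap z^n_\rho=\{0\}$ gives $\im\dot u\subset y^{n+1}_\rho\cap z^n_\rho$, which is~(ii); every step reverses. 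Finally $\dim(y^{n+1}_\rho\cap z^n_\rho)=1$ by general position of the boundary maps, so any $\dot u$ satisfying~(ii) has rank at most~$1$, hence a generator of the tangent line has rank exactly~$1$. I expect the chart-derivative computation of the second paragraph — and in particular the identification of $\{\mathvec v:\mathvec v+u^y_{x,z}(\mathvec v)\in y^{n-1}_\rho\}$ with $(y^{n-1}_\rho\oplus z^n_\rho)\cap x^n_\rho$ — to be the only delicate point; the surrounding steps are routine symplectic linear algebra.
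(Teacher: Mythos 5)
Your proposal is correct and follows essentially the same route as the paper: push the flag curve $\{L:\ y^{n-1}_\rho\subset L\subset y^{n+1}_\rho\}$ through the chart of Proposition~\ref{prop:ChartSym} and apply Theorem~\ref{thm:SmoothBoundary}. The only cosmetic differences are that the paper characterizes the whole flag curve as an affine line in $\Sym_{x,z}$ (obtaining (i) and (ii) directly from the two equivalent incidence conditions, which coincide because $y^{n+1}_\rho=(y^{n-1}_\rho)^\perp$), whereas you differentiate an explicit path and then deduce (i)$\iff$(ii) from the symmetry relation $\Ker(\dot u)=(\im\dot u)^\perp\cap x^n_\rho$ — both computations are sound.
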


\begin{proof}
Because of Theorem \ref{thm:SmoothBoundary}, the image of the boundary map $\xi^n_\rho$ is $\mathcal{C}^1$, and tangent at $y^n_\rho$ to the one dimensional submanifold $\ell$ of $\mathcal{L}_n$ consisting of Lagrangians $P$ satisfying the condition :
 $$y^{n-1}_\rho\subset P\subset y^{n+1}_\rho.$$
Since $y^{n-1}_\rho$ is orthogonal to $y^{n+1}_\rho$ with respect to $\omega$, and since $P$ is a Lagrangian, this is equivalent to $y^{n-1}_\rho\subset P$ which is equivalent to $P\subset y^{n+1}_\rho$.

\medskip

An element $u'\in \Sym_{x,z}$ corresponds to a Lagrangian $P$ satisfying $y^{n-1}_\rho\subset P$ if and only if for all $\mathvec{v}\in x^n_\rho$ such that $\mathvec{v}+u_{x,z}^y(\mathvec{v})\in y^{n-1}_\rho$ one has $\mathvec{w}+u'(\mathvec{w})=\mathvec{v}+u_{x,z}^y(\mathvec{v})$ for some $\mathrm{w}\in x^n_\rho$ that must be equal to $\mathrm{v}$ since $\mathrm{v}-\mathrm{w}\in x^n_\rho\cap z^n_\rho$. But $\mathrm{v}+u_{x,z}^y(\mathrm{v})\in y^{n-1}_\rho$ if and only if $\mathrm{v}\in (y^{n-1}_\rho\oplus z^n_\rho)\cap x^n_\rho$. Hence $y^{n-1}_\rho\subset P$ if and only if :
$$\left(y^{n-1}_\rho\oplus z^n_\rho\right)\cap x^n_\rho\subset \Ker(u'-u_{x,z}^y).$$

Similarly an element $u'\in \Sym_{x,z}$ corresponds to a Lagrangian $P$ satisfying  $P\subset y^{n+1}$ if and only if for all $\mathvec{v}\in x_\rho^{n}$, $\mathvec{v}+u'(\mathvec{v})\in y^{n+1}_\rho $. However $\mathvec{v}+u_{x,z}^y(\mathvec{v})\in y^n_\rho\subset y^{n+1}_\rho$. Hence $P\subset y^{n+1}_\rho $ if and only if for all $\mathrm{v}\in x^n_\rho$, $u'(\mathrm{v})-u^y_{x,z}(\mathrm{v})\in y^{n+1}_\rho$, or in other words:
$$\im(u'-u_{x,z}^y)\subset y^{n+1}_\rho \cap z^n_\rho.$$

\medskip

Therefore the image $\ell'$ of the submanifold $\ell$ in the chart $\Sym_{P,Q}$ is the affine line directed by symmetric endomorphisms $\dot{u}$ satisfying $\left(y^{n-1}_\rho\oplus z^n_\rho\right)\cap x^n_\rho\subset\Ker(\dot{u})$, or equivalently $\im(\dot{u})\subset y^{n+1}_\rho \cap z^n_\rho$. Such a non-zero element must have rank $1$.

\end{proof}

Theorem \ref{thm:SmoothBoundary} can be also rephrased in the chart $U_Q\simeq \mathcal{Q}(x^n_\rho)$ of $\mathcal{L}_n$. Recall that singular subspaces for a symmetic bilinear form were defined in Definition \ref{defn:SingularSubspace}.

\begin{lem}
\label{lem:DescriptionDerivativeBondaryMap2}
Let $\rho:\Gamma_g\to \Sp(2n,\R)$ be a $\{n-1,n\}$-Anosov representation that satisfies property $H_{n}$. Let $x,z\in \partial \Gamma_g$ be distinct points.
For $y\neq z$, the tangent space at $q^y_{x,z}$ to the image of the map 
$$w\in \partial\Gamma_g\setminus\{z\}\mapsto q^w_{x,z}$$ 
 is the affine line of $\mathcal{Q}(x^n_\rho)$ passing through $q^y_{x,z}$ and directed by the vector line of elements $\dot{q}\in \Sym_{x,z}$ such that the hyperplane $\left(y^{n-1}_\rho\oplus z^n_\rho\right)\cap x^n_\rho$ is singular for $\dot{q}$.
In particular such an element $\dot{q}\in \mathcal{Q}(x^n_\rho)$ must have signature $(1,0)$ or $(0,1)$.  
\end{lem}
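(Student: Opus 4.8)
The plan is to obtain Lemma~\ref{lem:DescriptionDerivativeBondaryMap2} from Lemma~\ref{lem:DescriptionDerivativeBondaryMap1}, by reading that lemma through the linear identification of Proposition~\ref{prop:ChartQ}. First I would observe that, by construction, the chart $U_{z^n_\rho}\simeq\mathcal{Q}(x^n_\rho)$ factors as the chart $U_{z^n_\rho}\simeq\Sym_{x,z}$ followed by the linear isomorphism $\Phi\colon\Sym_{x,z}\to\mathcal{Q}(x^n_\rho)$, $u\mapsto q_u$ with $q_u(\mathvec{v},\mathvec{w})=\omega(\mathvec{v},u(\mathvec{w}))$; in particular $q^w_{x,z}=\Phi(u^w_{x,z})$ for all $w\in\partial\Gamma_g\setminus\{z\}$. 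Being a linear isomorphism of vector spaces, $\Phi$ is in particular a diffeomorphism, so it carries the $\mathcal{C}^1$ curve described in Lemma~\ref{lem:DescriptionDerivativeBondaryMap1} — the image of $w\mapsto u^w_{x,z}$ — onto the image of $w\mapsto q^w_{x,z}$, and it carries the tangent line at $u^y_{x,z}$ onto the tangent line at $q^y_{x,z}$, sending a direction vector $\dot u$ of the former to the direction vector $\Phi(\dot u)=q_{\dot u}$ of the latter.

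It then remains to translate the description of the direction line. Set $H:=(y^{n-1}_\rho\oplus z^n_\rho)\cap x^n_\rho$, which is a hyperplane of $x^n_\rho$ by the discussion preceding Lemma~\ref{lem:DistinctPoints}. By Lemma~\ref{lem:DescriptionDerivativeBondaryMap1} the direction line of $w\mapsto u^w_{x,z}$ at $u^y_{x,z}$ is $\{\dot u\in\Sym_{x,z}\colon H\subset\Ker(\dot u)\}$. Refining the last assertion of Proposition~\ref{prop:ChartQ} slightly, I would check that the radical of $q_{\dot u}$ (the set of $\mathvec{w}$ with $q_{\dot u}(\mathvec{v},\mathvec{w})=0$ for all $\mathvec{v}\in x^n_\rho$) equals $\Ker(\dot u)$: if $\mathvec{w}$ lies in the radical then $\dot u(\mathvec{w})$ is $\omega$-orthogonal to $x^n_\rho$ and lies in $z^n_\rho$, hence lies in $\left(x^n_\rho\right)^\perp\cap z^n_\rho=x^n_\rho\cap z^n_\rho=\{0\}$ because $x^n_\rho$ is Lagrangian and $x^n_\rho,z^n_\rho$ are transverse. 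Thus $H\subset\Ker(\dot u)$ is equivalent to $H$ being singular for $\Phi(\dot u)$, and so the direction line of $w\mapsto q^w_{x,z}$ at $q^y_{x,z}$ is exactly $\{\dot q\in\mathcal{Q}(x^n_\rho)\colon H\text{ is singular for }\dot q\}$. One can avoid this computation by a dimension count: the symmetric forms on $x^n_\rho$ whose radical contains the hyperplane $H$ factor through the line $x^n_\rho/H$, hence form a line, which already contains the line $\Phi\big(\{\dot u\colon H\subset\Ker(\dot u)\}\big)$ and therefore coincides with it.

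Finally, for a non-zero $\dot q$ in this line the radical contains the hyperplane $H$ and is proper (as $\dot q\neq 0$), hence equals $H$; so $\dot q$ has rank one and therefore signature $(1,0)$ or $(0,1)$, which finishes the proof. The whole argument is a formal transport of structure along a linear chart change, so I do not expect a genuine obstacle; the only point demanding a little care is upgrading the inclusion $\Ker(u)\subset\{\text{radical of }q_u\}$ of Proposition~\ref{prop:ChartQ} to an equality (equivalently, carrying out the dimension count), which is the small symplectic linear algebra computation isolated above.
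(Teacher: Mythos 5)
Your proposal is correct and follows essentially the same route as the paper: both transport the description of the tangent line from Lemma \ref{lem:DescriptionDerivativeBondaryMap1} through the linear identification $\Sym_{x,z}\simeq\mathcal{Q}(x^n_\rho)$, translate the kernel condition into the singular-hyperplane condition, and deduce the signature statement from the fact that a non-zero form with a singular hyperplane has rank one. Your explicit verification that the radical of $q_{\dot u}$ equals $\Ker(\dot u)$ (or the dimension count) just makes precise an identification the paper states more tersely.
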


\begin{proof}
Let $\ell'$ be the affine line in $\Sym_{x,z}$ defined in the proof of Lemma \ref{lem:DescriptionDerivativeBondaryMap1} part (i). 
The  affine line $\tilde{\ell}$ in $\mathcal{Q}(x^n_\rho)$ corresponding to $\ell'$ via the linear identification $\Sym_{x,z}\simeq \mathcal{Q}(x^n_\rho)$ is directed by the elements $\dot{q}\in\mathcal{Q}(x^n_\rho)$ such that for some $\dot{u}\in \Sym_{P,Q}$ satisfying (i), and for all $\mathrm{v}\in  x^n_\rho$ and $\mathrm{w}\in (y^{n-1}_\rho\oplus z^n_\rho)\cap x^n_\rho$, one has $\dot{q}(\mathrm{v},\mathrm{w})=\omega(\mathrm{v},\dot{u}(\mathrm{w}))=0$.

 In other words $\tilde{\ell}$ is directed by the non-zero elements $\dot{q}\in \mathcal{Q}(x^n_\rho)$ such that $\dot{q}(\mathrm{v},\mathrm{w})=0$ for all $\mathrm{v}\in z^n_\rho$ and $\mathrm{w}\in\left(y^{n-1}_\rho\oplus x^n_\rho\right)\cap z^n_\rho$, \ie such that $\left(y^{n-1}_\rho\oplus z^n_\rho\right)\cap x^n_\rho$ is singular for $\dot{q}$. 

Since $\dot{q}$ is non-zero but admits a singular hyperplane, its signature is equal to $(1,0)$ or $(0,1)$
\end{proof}

A first application of this result is the following lemma, which will be used in the proof of Lemma \ref{lem:IntermediateH1}.

\begin{lem}
\label{lem:NonConstant}

Let $\rho$ be a $\{n-1,n\}$-Anosov representation that satisfies property $H_n$. Let $z\in \partial \Gamma_g$. 
The map that associates to $y\in \partial \Gamma_g\setminus \{z\}$ the hyperplane $y^{n-1}_\rho\oplus z^n\subset \R^{2n}$ is constant on no open interval.
\end{lem}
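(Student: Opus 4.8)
The plan is to argue by contradiction: suppose the map $y \mapsto y^{n-1}_\rho \oplus z^n_\rho$ is constant, equal to some fixed hyperplane $H$, on an open interval $I \subset \partial\Gamma_g \setminus \{z\}$. Fix a point $x \in I$. Then for every $y \in I$ the hyperplane $(y^{n-1}_\rho \oplus z^n_\rho) \cap x^n_\rho$ equals the single hyperplane $H \cap x^n_\rho$ of $x^n_\rho$, independent of $y$. By Lemma \ref{lem:DescriptionDerivativeBondaryMap2}, the tangent line to the image of $w \mapsto q^w_{x,z}$ at each point $q^y_{x,z}$, for $y \in I$, is directed by a symmetric bilinear form $\dot q$ for which the \emph{fixed} hyperplane $H \cap x^n_\rho$ is singular. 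The space of elements of $\mathcal{Q}(x^n_\rho)$ admitting this fixed hyperplane as a singular subspace is a single line $D \subset \mathcal{Q}(x^n_\rho)$. Hence the $\mathcal{C}^1$ curve $y \mapsto q^y_{x,z}$ restricted to $I$ has derivative everywhere proportional to a fixed vector, so its image is contained in an affine line $q^x_{x,z} + D$ of $\mathcal{Q}(x^n_\rho)$.

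Next I would derive a contradiction from the fact that $\rho$ is maximal is \emph{not} assumed here — only property $H_n$ plus $\{n-1,n\}$-Anosov — so the contradiction must come purely from transversality. The key observation is that the boundary map is injective and the points $y^n_\rho$ for $y \in I$ are pairwise transverse Lagrangians all lying in the chart $U_{z^n_\rho} \simeq \mathcal{Q}(x^n_\rho)$; two distinct Lagrangians $y^n_\rho, y'^n_\rho$ transverse to each other correspond to symmetric bilinear forms $q^y_{x,z}, q^{y'}_{x,z}$ whose difference is \emph{non-degenerate} (this is the standard fact that transversality in the Lagrangian Grassmannian, read in a symmetric-form chart, means the difference of the two forms is invertible). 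But if all the $q^y_{x,z}$ lie on the affine line $q^x_{x,z} + D$ with $D$ spanned by a single form $\dot q$ of signature $(1,0)$ or $(0,1)$ (again by Lemma \ref{lem:DescriptionDerivativeBondaryMap2}, $\dot q$ has rank $1$), then for $y \neq y'$ in $I$ the difference $q^y_{x,z} - q^{y'}_{x,z}$ is a nonzero multiple of a rank-$1$ form, hence degenerate whenever $n \geq 2$. This contradicts transversality of $y^n_\rho$ and $y'^n_\rho$. (For $n = 1$ the statement is vacuous or trivial since there is no interesting hyperplane; the hypothesis $H_n$ is genuinely used to invoke the full strength of Lemmas \ref{lem:DescriptionDerivativeBondaryMap1} and \ref{lem:DescriptionDerivativeBondaryMap2}, which require property $H_{n-1}$ — note $H_n$ implies $H_{n-1}$ is \emph{not} automatic, so I should check whether the intended hypothesis chain is $H_n \Rightarrow$ the Anosov conditions needed; more carefully, the lemmas as stated need $H_{n-1}$, so either the statement should read "$H_{n-1}$ and $H_n$" or there is an implication I should cite.)

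Let me restructure to avoid that gap. Rather than invoking Lemma \ref{lem:DescriptionDerivativeBondaryMap2}, I would use Lemma \ref{lem:DescriptionDerivativeBondaryMap1}: if the hyperplane $(y^{n-1}_\rho \oplus z^n_\rho) \cap x^n_\rho$ is the fixed hyperplane $K := H \cap x^n_\rho$ for all $y \in I$, then the tangent direction to $w \mapsto u^w_{x,z}$ at each $u^y_{x,z}$, $y \in I$, is a rank-$1$ symmetric map killing $K$; the space of such maps is one-dimensional, so again $y \mapsto u^y_{x,z}$ traces (on $I$) a subset of a single affine line $u^x_{x,z} + \R\dot u$ in $\Sym_{x,z}$. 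Two distinct points of $I$ give transverse Lagrangians, whose corresponding endomorphisms in $\Sym_{x,z}$ must differ by an \emph{invertible} endomorphism $x^n_\rho \to z^n_\rho$ — but their difference is a multiple of the rank-$1$ map $\dot u$, impossible since $\dim x^n_\rho = n \geq 2$. This is the contradiction.

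\textbf{Main obstacle.} The delicate point is the regularity/hypothesis bookkeeping: Lemmas \ref{lem:DescriptionDerivativeBondaryMap1} and \ref{lem:DescriptionDerivativeBondaryMap2} are stated under property $H_{n-1}$, whereas the present lemma hypothesizes $H_n$; I expect the paper either intends $H_n$ to subsume what is needed or the derivative description must be re-derived. The other point requiring care is that the $\mathcal{C}^1$ curve $y \mapsto u^y_{x,z}$ having derivative everywhere proportional to a fixed vector $\dot u$ does force its image into a line — this is just the mean value / fundamental theorem of calculus argument in the finite-dimensional vector space $\Sym_{x,z}$, but one should phrase it cleanly using that $\partial\Gamma_g$ near $x$ is parametrized by an interval of $\R$ and the curve is $\mathcal{C}^1$ there (by Theorem \ref{thm:SmoothBoundary} the \emph{image} is $\mathcal{C}^1$; composing with a $\mathcal{C}^1$ parametrization of the circle is what one needs, which is standard once injectivity of $\xi^n_\rho$ is used). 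Everything else is elementary linear algebra about ranks and transversality in the Lagrangian Grassmannian.
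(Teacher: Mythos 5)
Your restructured argument is correct and is essentially the paper's own proof: the paper likewise fixes $x$ in the putative interval, applies Lemma \ref{lem:DescriptionDerivativeBondaryMap1} to conclude that $y\mapsto u^y_{x,z}$ has derivative in a fixed rank-one direction, hence that $u^y_{x,z}$ has rank at most $1$ (it integrates from $u^x_{x,z}=0$, which is just your ``difference of two points on the line'' argument with $y'=x$), and contradicts transversality of $y^n_\rho$ with $x^n_\rho$. The hypothesis worry you flag is only a typo in the paper: Lemmas \ref{lem:DescriptionDerivativeBondaryMap1} and \ref{lem:DescriptionDerivativeBondaryMap2} concern the $n$-th boundary map and invoke Theorem \ref{thm:SmoothBoundary} with $k=n$, so the property actually needed (and the one used in every subsequent application) is $H_n$, exactly as assumed in the present lemma.
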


\begin{proof}
Let $x\in \partial \Gamma_g$ be any point distinct from $z$. Let $\psi$ be the map that associates to an element $y\in \partial \Gamma_g\setminus \{z\}$ the following hyperplane of $x^n_\rho$:
$$\psi(y)=\left(y^{n-1}_\rho\oplus z^n_\rho\right)\cap x^n_\rho.$$

If this map was constant on some open interval $I$, Lemma \ref{lem:DescriptionDerivativeBondaryMap1} would imply that the image by $y\mapsto u^y_{x,z}\in \Sym_{x,z}$ restricted to $I$ has a tangent direction which is always a rank one symmetric element with constant kernel $\psi(x)$. 

\medskip

However in this case $u_{x,z}^y$ would be the integral of  some elements $\dot{u}\in \Sym_{x,z}$ whose kernel always contains $\psi(x)$. In particular $u^y_{x,z}$ would have rank at most $1$. However this would imply that this element has a kernel, and hence $y^n_\rho$ has a non-trivial intersection with $x^n_\rho$. This would contradict the transversality of the boundary maps (Proposition \ref{prop:TransversalityBoundaryMaps}). 

\medskip

Hence the map $\psi$ cannot be constant on any open interval.
\end{proof}

\section{Relation between maximality and property \texorpdfstring{$H_n$}{Hn}.}
\label{sec:MaximalityHn}

Our goal will be to prove that a $\{n-1,n\}$-Anosov representation $\rho$ is maximal if and only if it satisfies property $H_n$. In order to prove property $H_n$ implies maximality we will use the smoothness of the $n$-th boundary curve and the following simple geometric fact. 

\medskip

A \emph{closed cone} of a vector space is a closed subset that is stable by addition and multiplication by positive scalars.

\begin{lem}
\label{lem:ConeConvex}

 Let $V$ be a real vector space and $S$ be a closed cone in $V$, Let $\eta :\mathbb{R}\to V$ be a $\mathcal{C}^1$ curve such that for all $t\in \mathbb{R}$, $\eta'(t)\in S$ and $\eta(0)\in S$.
Then, for all $t\geq 0$, $\eta(t)\in S$.

\end{lem}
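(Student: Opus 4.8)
The plan is to prove Lemma \ref{lem:ConeConvex} by reducing the statement to a one-dimensional problem via linear functionals supported by the closed cone $S$. Since $S$ is a closed convex cone (closedness and stability under addition and positive scaling give convexity), it is the intersection of all closed half-spaces containing it, i.e. $S = \bigcap_{\lambda \in S^*} \{v \in V : \lambda(v) \geq 0\}$, where $S^* \subset V^*$ is the dual cone. (If $V$ is infinite-dimensional one should fix a Hausdorff locally convex topology and invoke Hahn--Banach separation; in the application $V = \mathcal{Q}(x^n_\rho)$ is finite-dimensional, so ordinary separation suffices.) Thus it is enough to show that $\lambda(\gamma(t)) \geq 0$ for every $t \geq 0$ and every $\lambda \in S^*$.

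First I would fix such a $\lambda$ and set $f(t) := \lambda(\gamma(t))$. Since $\gamma$ is $\mathcal{C}^1$ and $\lambda$ is linear and continuous, $f$ is $\mathcal{C}^1$ with $f'(t) = \lambda(\gamma'(t))$. By hypothesis $\gamma'(t) \in S$ for all $t$, so $f'(t) = \lambda(\gamma'(t)) \geq 0$ for all $t$ by definition of $S^*$; hence $f$ is nondecreasing on $\R$. Also $\gamma(0) \in S$ gives $f(0) = \lambda(\gamma(0)) \geq 0$. Therefore $f(t) \geq f(0) \geq 0$ for all $t \geq 0$. As this holds for every $\lambda \in S^*$, we conclude $\gamma(t) \in S$ for all $t \geq 0$.

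I would also record the purely elementary alternative that avoids duality, in case one prefers to stay in finite dimensions without separation theorems: for $t \geq 0$ one writes $\gamma(t) = \gamma(0) + \int_0^t \gamma'(s)\, ds$, and since $S$ is a closed cone it is closed under the Riemann integral of an $S$-valued continuous function (the integral is a limit of Riemann sums, each of which lies in $S$ by stability under positive scaling and addition, and $S$ is closed), so $\int_0^t \gamma'(s)\, ds \in S$; adding $\gamma(0) \in S$ and using additivity again gives $\gamma(t) \in S$. There is essentially no serious obstacle here: the only mild subtlety is justifying that a closed cone is stable under integration (handled by the Riemann-sum approximation above) or, in the dual approach, invoking the correct separation statement in the topological vector space $V$; both are routine, and in the paper's application $V$ is a finite-dimensional space of quadratic forms where everything is immediate.
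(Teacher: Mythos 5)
Your proposal is correct, and in fact it contains two complete arguments. Your fallback argument (write $\gamma(t)=\gamma(0)+\int_0^t\gamma'(s)\,ds$, approximate the integral by Riemann sums, each of which lies in $S$ by stability under addition and positive scaling, and conclude by closedness of $S$) is exactly the proof the paper gives, so on that route you match the paper verbatim. Your primary argument via the dual cone is a genuinely different route: you observe that a closed cone stable under addition and positive scaling is convex, hence (by separation/the bipolar theorem, noting $0\in S$ since $S$ is a nonempty closed cone) equals $\bigcap_{\lambda\in S^*}\{\lambda\geq 0\}$, and then reduce to the scalar fact that a $\mathcal{C}^1$ function with nonnegative derivative and nonnegative initial value stays nonnegative. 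This buys a cleaner one-dimensional reduction and avoids discussing limits of $S$-valued Riemann sums, at the cost of invoking a separation theorem; the paper's argument is more elementary and self-contained, which is all that is needed since the application is to closed convex cones in the finite-dimensional space $\mathcal{Q}(x^n_\rho)$. You correctly flag the only subtleties in each route (stability of $S$ under integration, respectively the correct separation statement), and both are handled adequately.
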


In other words, if the derivative of a curve stays in a closed cone and if the curve is in the closed cone initially, then the curve stays in this closed cone. We will use this fact again to prove Lemma \ref{lem:Order4Points}.

\begin{proof}

Let $t\geq 0$ be a real number, the we can write $\eta(t)$ as :
$$\eta(t)=\eta(0)+\int_{0}^t \eta'(s)\mathrm{d}s.$$

Hence $\eta(t)$ can be approximated by finite sums of elements in $S$, which are also in $S$ since $S$ is a cone. Moreover $S$ is closed so $\eta(t)\in S$.
\end{proof}

Now we prove the following characterization of maximal representations that are $\{n-1,n\}$-Anosov. 

\begin{thm}
\label{thm:MaximalH_n}
Let $1\leq k\leq n$. Let $\rho:\Gamma_g\to\Sp(2n,\R)$ be a $\{n-1,n\}$-Anosov representation. 
The representation $\rho$ satisfies property $H_{n}$ if and only if it is maximal for some orientation of $\partial \Gamma_g$.
\end{thm}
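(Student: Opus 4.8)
The plan is to prove the two implications separately, using the smoothness result of Theorem \ref{thm:SmoothBoundary} and the convexity trick of Lemma \ref{lem:ConeConvex} for the direction $H_n \Rightarrow$ maximal, and a more direct linear-algebra argument for the converse.

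First I would prove that maximality implies $H_n$. Assume $\rho$ is maximal for some orientation, so for every positive triple $(x,y,z)$ the triple of Lagrangians $(x^n_\rho, y^n_\rho, z^n_\rho)$ is maximal, i.e. $q^y_{x,z}\in\mathcal{Q}^+(x^n_\rho)$ is positive definite. I want to show the sum in \eqref{eq:equivalentFormHn}, namely $x^n_\rho + (y^n_\rho\cap z^{n+1}_\rho) + z^{n-1}_\rho$, is direct. The point is that $y^n_\rho \cap z^{n+1}_\rho$ and $z^{n-1}_\rho$ are complementary inside the hyperplane $z^{n+1}_\rho$, and one checks directly, using that $\rho$ is $\{n-1,n\}$-Anosov, that the projection of $y^{n}_\rho$ onto $x^n_\rho$ along $z^n_\rho$ is already well-defined; positive-definiteness of $q^y_{x,z}$ forces the relevant intersections to be trivial. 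Concretely, a vector $\mathbf{v}\in x^n_\rho$ lying in $(y^n_\rho\cap z^{n+1}_\rho)\oplus z^{n-1}_\rho$ produces, via the chart identification, a vector in $x^n_\rho$ on which $q^y_{x,z}$ vanishes when paired suitably with $z^{n-1}_\rho$; nondegeneracy of a positive definite form kills it. I would also need to treat both orientations, but Proposition \ref{prop:H_n-k} together with the symmetry in $(x,y,z)$ handles the reversed orientation since reversing orientation permutes the roles of the three points.

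For the converse, assume $\rho$ is $\{n-1,n\}$-Anosov and satisfies $H_n$; note $H_n$ together with Proposition \ref{prop:H_n-k} gives $H_{n-1}$ as well, so Lemmas \ref{lem:DescriptionDerivativeBondaryMap1} and \ref{lem:DescriptionDerivativeBondaryMap2} apply. Fix distinct $x,z\in\partial\Gamma_g$ and consider the $\mathcal{C}^1$ curve $w\mapsto q^w_{x,z}\in\mathcal{Q}(x^n_\rho)$ on the interval of $\partial\Gamma_g\setminus\{z\}$ on one side of $x$. By Lemma \ref{lem:DescriptionDerivativeBondaryMap2} its derivative at each parameter $y$ is a nonzero element of signature $(1,0)$ or $(0,1)$, i.e. $\pm$ a rank-one positive semidefinite form. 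I would first show that the sign is locally constant — hence, by connectedness of the interval and continuity of the curve, globally constant — choosing the orientation of $\partial\Gamma_g$ so that this sign is $+$; this is where I expect to spend some care, arguing that a sign flip would force the derivative to vanish at an intermediate point, contradicting the lemma, or arguing via a limiting/degeneration computation near $x$ or $z$. Once the derivative lies always in the closed cone $\overline{\mathcal{Q}^+(x^n_\rho)}$ of positive semidefinite forms, Lemma \ref{lem:ConeConvex} applied to $\gamma(t)=q^{w(t)}_{x,z}$ (noting $q^x_{x,z}=0$ is in the cone as base point, or rather starting from a slightly interior point and using the rank-one derivatives to gain positivity) shows $q^y_{x,z}$ is positive semidefinite for all $y$ on that side; then one upgrades semidefinite to definite using transversality of the boundary maps (the kernel of $q^y_{x,z}$ would give a nontrivial intersection $x^n_\rho\cap y^n_\rho$), exactly as in the proof of Lemma \ref{lem:NonConstant}. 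This shows every positive triple maps to a maximal triple, and by Remark \ref{rem:MaximalStable} and the characterization of maximal representations recalled in Section \ref{sec:LinAlg}, $\rho$ is maximal.

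The main obstacle, I expect, is pinning down the global sign of the derivative $\dot q^y_{x,z}$ and arranging the base point of the cone argument: a priori Lemma \ref{lem:DescriptionDerivativeBondaryMap2} only says the signature is $(1,0)$ or $(0,1)$ pointwise, and one must rule out that it is $+$ on part of the interval and $-$ on another part, which requires either a monotonicity/continuity argument on the ambient $\mathcal{C}^1$ curve or an explicit local model near the endpoints $x$ and $z$ where the behaviour of $q^w_{x,z}$ can be compared with the Veronese picture of Figure \ref{fig:Veronese}. Everything else is a matter of translating the direct-sum condition $H_n$ into a nondegeneracy statement about $q^y_{x,z}$ via the charts of Section \ref{sec:LinAlg} and invoking Lemma \ref{lem:ConeConvex}.
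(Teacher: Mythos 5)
Your proposal is correct and follows essentially the same route as the paper: maximality forces $q^y_{x,z}$ to vanish on any vector witnessing a failure of $H_n$ (using that $z^{n+1}_\rho$ and $z^{n-1}_\rho$ are $\omega$-orthogonal), and conversely $H_n$ gives, via Lemma \ref{lem:DescriptionDerivativeBondaryMap2}, a $\mathcal{C}^1$ curve $t\mapsto q^{\phi(t)}_{x,z}$ whose derivative is nonzero of constant sign by connectedness, so Lemma \ref{lem:ConeConvex} started at $q^x_{x,z}=0$ plus transversality yields maximality. One small slip: Proposition \ref{prop:H_n-k} relates $H_k$ to $H_{2n-k}$ and is vacuous for $k=n$, so it does not give you $H_{n-1}$; the lemmas in question are really applied under $H_n$ (their stated hypothesis $H_{n-1}$ appears to be a typo), exactly as the paper does.
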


\begin{proof}

Suppose first that $\rho$ is maximal for some orientation of $\Gamma_g$. Let $(x,y,z)$ be a positive triple of distinct points in $\partial \Gamma_g$. Suppose that the sum \eqref{eq:PropHn} is not direct, \ie that there is a vector $\mathvec{h}$ belonging to the intersection:
$$\left(\left(x_\rho^{n}\cap z_\rho^{n+1}\right) \oplus  z_\rho^{n-1}\right)\cap\left(y_\rho^{n}\cap z_\rho^{n+1}\right).$$

 Note that in this expression, $\left(x_\rho^{n}\cap z_\rho^{n+1}\right) \oplus  z_\rho^{n-1}$ is direct since $x_\rho^n\cap z_\rho^{n-1}=\lbrace 0\rbrace$. In particular $\mathvec{h}=\mathvec{v}+\mathvec{w}$ for some $\mathvec{v}\in x_\rho^{n}\cap z_\rho^{n+1}$, $\mathvec{w}\in z_\rho^{n-1}$.
Moreover $\mathvec{h}\in y_\rho^{n}$ so $u^y_{x,z}(\mathvec{v})=\mathvec{w}$ with $u^y_{x,z}\in \Sym_{x,z}$ the element corresponding to $y^n_\rho$. 

\smallskip

Lemma \ref{lem:ParticularOrthogonality} implies that $z_\rho^{n+1}$ and $z_\rho^{n-1}$ are orthogonal with respect to $\omega$, so $\omega(\mathvec{v},\mathvec{w})=0$. Thus the symmetric bilinear form $q^y_{x,z}$ associated to $u^y_{x,z}$ satisfies $q^y_{x,z}(\mathvec{v},\mathvec{v})=\omega\left(\mathvec{v},u^y_{x,z}(\mathvec{v})\right)=0$. However $q^y_{x,z}$ is positive since $\rho$ is maximal and $(x,y,z)$ is positive. Hence $\mathvec{v}=\mathvec{w}=0$ and the desired sum of spaces is direct. This means that the sum is direct for all positive triples $(x,y,z)$, but since \eqref{eq:PropHn} stays invariant when $x$ and $y$ are exchanged, this sum is direct for all triples. Therefore if $\rho$ is maximal, then property $H_{n}$ holds.

\bigskip

Conversely, let us suppose that $\rho$ satisfies $H_n$. Let $x,z\in \partial \Gamma_g$ be distinct points. Lemma \ref{lem:DescriptionDerivativeBondaryMap2} implies that there exists a parametrization $\phi:\R\to \partial \Gamma_g\setminus \{z\}$ which is a homeomorphism such that $f:\R\to \mathcal{Q}(x^n_\rho)$, $t\mapsto q^{\phi(t)}_{x,z}$ is a $\mathcal{C}^1$ embedding. 

The derivative of $f$ at all times is non-zero and has signature $(1,0)$ or $(0,1)$. Up to considering $t\mapsto \phi(-t)$, we can assume that at some point the derivative of $f$ has signature $(1,0)$, and hence it has signature $(1,0)$ for all points $t\in \R$. 

\smallskip

Let us assume that $\phi(0)=x$. The derivative of $t\mapsto q^{\phi(t)}_{x,z}$ has signature $(1,0)$, hence it belongs to the closed cone $\overline{\mathcal{Q}^+(x^n_\rho)}$ of semi-positive elements. Moreover $q^x_{x,z}=0$ is also in this closed cone. Hence by Lemma \ref{lem:ConeConvex} the image of $\mathbb{R}_{\geq 0}$ consists only of semi-positive elements. 

For $t> 0$, the transversality of boundary maps implies that:
 $$x^n_\rho\cap\xi^n_\rho\circ \phi(t)=\{0\}.$$
 
 Hence $q^{\phi(t)}_{x,z}$ is a non-degenerate symmetric bilinear form. Since it belongs to $\overline{\mathcal{Q}^+(x^n_\rho)}$, it is positive. Therefore the triple of Lagrangians $(x^n_\rho,\xi^n_\rho\circ \phi(t),z^n_\rho)$ is maximal for all $t>0$.

\smallskip

Hence for at least one triple of distinct points $x,y,z\in \partial \Gamma_g$ , the triple $(x^n_\rho,y^n_\rho,z^n_\rho)$ is maximal. Because of Remark \ref{rem:MaximalStable}, this holds for all triple of distinct points $(x',y',z')$ ordered as $(x,y,z)$ in $\partial \Gamma_g$. Therefore for the orientation of $\partial \Gamma_g$ such that $(x,y,z)$ is positive, the representation $\rho$ is maximal.
\end{proof}

\section{From property \texorpdfstring{$H_2$}{H2} to \texorpdfstring{$H_1$}{H1} for \texorpdfstring{$\Sp(4,\R)$}{Sp(4,R)}.}
\label{sec:H2toH1}

We proved in Section \ref{sec:MaximalityHn} that any maximal and $\{n-1,n\}$-Anosov representation satisfies property $H_n$. This means that we can use Theorem \ref{thm:SmoothBoundary} to get more information on the boundary curve.
In this section we prove that if additionally $n=2$, such a representation must also satisfy property $H_1$. 
 
\medskip

For a triple $(x,y,z)$ of distinct points in the circle $\partial\Gamma_g$, let $(x,y)_z$ and $[x,y]_z$ be respectively the open and closed arc in the circle $\partial \Gamma_g$ between $x$ and $y$ not containing $z$.

Before we prove the key result of this section in Lemma \ref{lem:Order4Points}, we need to find a positive triple of points in $\partial \Gamma_g$ that satisfies the following lemma. Given a triple $(x,w,z)$ in $\partial \Gamma_g$ such that $x,w\neq z$  we define:
$$\psi(w)=\left(w_\rho^1\oplus z^2_\rho\right)\cap x^2_\rho\in \mathbb{P}(x^2_\rho).$$

\begin{lem}
\label{lem:IntermediateH1}
Let $\rho:\Gamma_g\to \Sp(4,\R)$ be a $\{1,2\}$-Anosov representation that is maximal for some orientation of $\partial \Gamma_g$. There exists a positive triple $(x,y,z)$ in $\partial \Gamma_g$ such that $\psi(x)\neq \psi(y)$ and  for all $w\in (x,y)_z$ and $\psi(w)\neq \psi(x),\psi(y)$.
\end{lem}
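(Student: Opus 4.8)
The plan is to work in the fixed chart $\mathcal{Q}(x^2_\rho)$ and to exploit the smoothness statement of Theorem \ref{thm:SmoothBoundary} (applicable since $\rho$ is maximal, hence satisfies $H_2$ by Theorem \ref{thm:MaximalH_n}, so in particular it satisfies $H_1$ after Proposition \ref{prop:H_n-k}, and it is $\{1,2\}$-Anosov). Fix first an arbitrary distinct pair $z$ and an auxiliary point, and consider the map $\psi:\partial\Gamma_g\setminus\{z\}\to\mathbb{P}(x^2_\rho)$, $w\mapsto(w^1_\rho\oplus z^2_\rho)\cap x^2_\rho$, where $x$ will be chosen along the way. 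Note $\mathbb{P}(x^2_\rho)$ is a topological circle, and $\psi$ is continuous. By Lemma \ref{lem:NonConstant}, $\psi$ is constant on no open interval. The strategy is: choose $x,y,z$ so that $\psi$ restricted to the closed arc $[x,y]_z$ is injective except possibly identifying a few endpoints, and then shrink.

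First I would pick any three distinct points $a,b,z$ forming a positive triple, set $x=a$, and look at $\psi$ on the arc $(x,b)_z$. Since $\psi$ is not locally constant, it is not globally constant, so there are points of that arc where $\psi$ takes at least two distinct values; pick $p$ in the open arc with $\psi(p)\neq\psi(x)$. Now consider the restriction of $\psi$ to the closed arc $[x,p]_z$: this is a continuous map from a closed interval to the circle $\mathbb{P}(x^2_\rho)$ with $\psi(x)\neq\psi(p)$. The key point is to extract from this a subarc $[x',y']_{z}$ on which $\psi(x')\neq\psi(y')$ and $\psi$ avoids both values $\psi(x'),\psi(y')$ in the interior. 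Concretely: travelling along $[x,p]_z$ from $x$, let $y'$ be the last point at which $\psi$ equals $\psi(x)$ — i.e. $y'=\sup\{w\in[x,p]_z : \psi(w)=\psi(x)\}$ with the natural order on the arc; by continuity $\psi(y')=\psi(x)$ and $\psi\neq\psi(x)$ strictly after $y'$. Then on $(y',p]_z$ do the symmetric trimming from the $p$ side: let $x''=\inf\{w\in[y',p]_z:\psi(w)=\psi(p)\}$, so $\psi(x'')=\psi(p)$ and $\psi\neq\psi(p)$ strictly before $x''$. After possibly one more application (if $\psi$ takes the value $\psi(y')$ again inside $(y',x'')_z$, repeat the first trimming) one lands on a subarc whose endpoints carry distinct $\psi$-values not attained in the open interior; the triple $(x',y',z)$ (with $x'$ the point with value $\psi(x)$, $y'$ the one with value $\psi(p)$, both relabelled) is still positive since it lies inside the original positive arc, and one sets $x:=x'$, $y:=y'$. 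Finally one must check $\psi(x)\neq\psi(y)$ survives — it does by construction — and that the relabelled $\psi$, which depends on $x$, still has the avoidance property: since changing the basepoint $x$ only composes $\psi$ with a fixed projective change of chart on $\mathbb{P}(x^2_\rho)$, equalities and inequalities of $\psi$-values are preserved, so no genuine dependence issue arises.

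The main obstacle I expect is the \emph{finiteness} of this trimming procedure: a priori $\psi$ could oscillate infinitely often between the two candidate endpoint values on every subarc, so the "last/first occurrence" argument might not terminate after finitely many steps. The clean way around this is not to iterate blindly but to argue once and for all: on $[x,p]_z$, let $y'$ be the supremum of the closed set $\{w:\psi(w)=\psi(x)\}$ and let $x''$ be the supremum of $\{w\in[y',p]_z:\psi(w)=\psi(y')\}$ — wait, that is $y'$ again — so instead take the closed set $F=\psi^{-1}(\{\psi(x),\psi(p)\})\cap[x,p]_z$, which is closed and contains $x,p$, hence its complement in $[x,p]_z$ is a disjoint union of open arcs; pick one such open arc $(x,y)_z$ whose two endpoints lie in $F$ — it is nonempty since $\psi$ is not locally constant, so $F\neq[x,p]_z$ — and on it $\psi$ avoids both values by definition of $F$. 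If the two endpoints happen to carry the \emph{same} value (both $\psi(x)$ or both $\psi(p)$), then because $\psi(x)\neq\psi(p)$ there must be, between this arc and $p$ (or $x$), a further point of $F$ with the other value; connectedness of $[x,p]_z$ and the fact that $F$ separates the two values then forces the existence of an adjacent complementary arc whose endpoints carry \emph{different} values. That arc, together with $z$, is the desired positive triple, completing the proof.
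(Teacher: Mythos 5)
Your argument is correct and essentially identical to the paper's: both invoke Lemma \ref{lem:NonConstant} to get non-constancy of $w\mapsto w^1_\rho\oplus z^2_\rho$, then perform a last-occurrence/first-occurrence trimming to isolate an arc whose endpoints carry distinct values not attained in its interior, and both dispose of the basepoint-dependence of $\psi$ by noting its values are determined by that hyperplane (equivalently, $\psi_0(w)=\psi(w)\oplus z^2_\rho$). Your worry about non-termination is unfounded: once you take the \emph{last} occurrence of the value $\psi(x)$, that value is never attained again on the remaining arc, so the two-step trimming already suffices (your closed-set $F$ variant is a correct reformulation of the same step); the only slip is the parenthetical claim that $H_2$ yields $H_1$ via Proposition \ref{prop:H_n-k} (for $n=2$ that proposition relates $H_1$ to $H_3$), but $H_1$ is never needed here.
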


\begin{proof}

Let $z\in \partial \Gamma_g$ be any point. Let $\psi_0$ be the map that associates to an element $w\in \partial \Gamma_g\setminus \{z\}$ the hyperplane $w_\rho^1\oplus z^2_\rho\subset \R^4$. Theorem \ref{thm:MaximalH_n} implies that $\rho$ satisfies property $H_n$, and because of Lemma \ref{lem:NonConstant} the map $\psi_0$ is not constant. 

\medskip

In particular we can find some distinct $x_0,y_0\in \partial \Gamma_g\setminus\{z\}$ such that $\psi_0(x_0)\neq \psi_0(y_0)$.
Let $x\in [x_0,y_0]_z$ be the unique point such that $\psi_0(x)=\psi_0(x_0)$ and for all $w\in (x,y_0)_z$, $\psi_0(w)\neq \psi_0(x)$.
Then define similarly $y\in [x,y_0]_z$ as the unique point such that $\psi_0(y)=\psi_0(y_0)$ and for all $w\in (x,y)_z$, $\psi_0(w)\neq \psi_0(y)$.

Hence $\psi_0(x)\neq \psi_0(y)$, and for all $w\in (x,y)_z$, $\psi_0(w)\neq \psi_0(x),\psi_0(y)$. Up to exchanging $x$ and $y$ we can assume that $(x,y,z)$ is a positive triple.

\medskip

Now that we fixed a triple $(x,y,z)$, the map $\psi$ is defined. For all $w\neq z$, $\psi_0(w)=\psi(w)\oplus z^2_\rho$. Hence $\psi(x)\neq \psi(y)$ and for all $w\in (x,y)_z$,  $\psi(w)\neq \psi(x), \psi(y)$. The triple $(x,y,z)$ satisfies the desired condition.

\end{proof}
 
Let $P,Q$ be two Lagrangians in $\R^4$. The space $\mathbb{P}(\mathcal{Q}^+(P))$ of positive symmetric bilinear forms on $P$ up to a scalar is a projective model of the hyperbolic plane $\mathbb{H}^2$.
There is a natural identification $\iota:\mathbb{P}(P)\to \mathbb{P}(\partial \mathcal{Q}^+(P))$.
To a line $\ell\in \mathbb{P}(P)$ we can associate the line $\iota(\ell)$ of symmetric bilinear elements elements $q\in \mathcal{Q}(P)$  for which $\ell$ is singular (see Definition \ref{defn:SingularSubspace}).

\medskip

Recall that we use Notation \ref{nota:NotationBoundarymaps} for the boundary maps of an Anosov representation.
Given a $\{1,2\}$-Anosov representation $\rho$ that satisfies property $H_2$, the fact that $y^1_\rho\subset y^2_\rho\subset y^3_\rho$ implies the following result.

\begin{lem}
\label{lem:Aligment3ptsQ}
Let $\rho:\Gamma_g\to \Sp(4,\R)$ be a $\{1,2\}$-Anosov representation that satisfies property $H_2$.
Let $(x,y,z)\in \partial \Gamma_g$ be distinct points. The point $\left[q^y_{x,z}\right]\in \mathbb{P}(\mathcal{Q}^+(x^2_\rho))$ lies in the projective line between the two elements of $\mathbb{P}(\partial \mathcal{Q}^+(x^2_\rho))$:
$$\iota(y^3_\rho\cap x^2_\rho)\;,\;\iota\left((y^1_\rho\oplus z^2_\rho)\cap x^2_\rho\right).$$
\end{lem}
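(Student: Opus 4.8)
The plan is to work entirely inside the chart $\mathcal{Q}(x^2_\rho)$, where we have a two-dimensional space of symmetric bilinear forms, so that $\mathbb{P}(\mathcal{Q}^+(x^2_\rho))$ is a projective disk model of $\mathbb{H}^2$ whose boundary $\mathbb{P}(\partial\mathcal{Q}^+(x^2_\rho))$ is identified via $\iota$ with $\mathbb{P}(x^2_\rho)$. Concretely, $\dim x^2_\rho = 2$, so $\mathcal{Q}(x^2_\rho)$ is $3$-dimensional, $\mathbb{P}(\mathcal{Q}(x^2_\rho))$ is a projective plane, the cone of positive forms projectivizes to a disk, and the rank-one forms (those with a singular line) projectivize to the boundary conic; the map $\iota$ sending a line $\ell\in\mathbb{P}(x^2_\rho)$ to the (class of the) rank-one form with $\ell$ singular is precisely the parametrization of that conic. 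So the statement is the claim that the point $[q^y_{x,z}]$ lies on the chord of this conic joining $\iota(y^3_\rho\cap x^2_\rho)$ to $\iota((y^1_\rho\oplus z^2_\rho)\cap x^2_\rho)$.

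First I would identify the two boundary points concretely. Using Lemma~\ref{lem:DescriptionDerivativeBondaryMap2}, the derivative of $t\mapsto q^{\phi(t)}_{x,z}$ at $y$ is directed by a rank-one form whose singular line is $(y^1_\rho\oplus z^2_\rho)\cap x^2_\rho$ (here $n=2$, so $y^{n-1}_\rho = y^1_\rho$); hence $\iota((y^1_\rho\oplus z^2_\rho)\cap x^2_\rho)$ is the tangent direction to the curve $t\mapsto[q^{\phi(t)}_{x,z}]$ at the point $[q^y_{x,z}]$ — this is the ``endpoint coming from differentiating in $y$''. For the other boundary point: the quadratic form $q^y_{x,z}$ has a singular line exactly when its graph Lagrangian $y^2_\rho$ fails to be transverse to something, and more to the point, by Proposition~\ref{prop:ChartQ} the kernel of $u^y_{x,z}$ is singular for $q^y_{x,z}$. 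So I would show that the rank-one ``direction'' obtained by collapsing $q^y_{x,z}$ onto its singular structure relative to the other chart endpoint corresponds to $y^3_\rho\cap x^2_\rho$. The cleanest route is: the line through $[q^y_{x,z}]$ and $\iota(y^3_\rho\cap x^2_\rho)$ in $\mathbb{P}(\mathcal{Q}(x^2_\rho))$ should be the image, under the identification $\Sym_{x,z}\simeq\mathcal{Q}(x^2_\rho)$, of the affine line of symmetric maps $P\to Q$ whose image lies in $y^3_\rho\cap z^2_\rho$; this is exactly the line of Lagrangians containing $x^1_\rho$... no — rather, it is the line of Lagrangians $P$ with $P\subset y^3_\rho$, which by the computation in the proof of Lemma~\ref{lem:DescriptionDerivativeBondaryMap1} (applied with the roles of $x$ and $y$ suitably placed) is directed by symmetric maps with image in $y^3_\rho\cap z^2_\rho$, equivalently with a prescribed singular line, namely $y^3_\rho\cap x^2_\rho$ read through $\omega$.

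So the core of the proof is the following linear-algebra identity: in the chart $\mathcal{Q}(x^2_\rho)$, the affine line $\{q : P \subset y^3_\rho,\ P\in U_{z^2_\rho}\}$ (a line of Lagrangians squeezed between $x^1$ and... actually squeezed so as to stay inside $y^3_\rho$) passes through $q^y_{x,z}$ (since $y^2_\rho\subset y^3_\rho$, using $\{1,2\}$-Anosov compatibility) and meets the boundary conic at $\iota(y^3_\rho\cap x^2_\rho)$; similarly the tangent line to the boundary curve at $[q^y_{x,z}]$ meets the conic at $\iota((y^1_\rho\oplus z^2_\rho)\cap x^2_\rho)$ by Lemma~\ref{lem:DescriptionDerivativeBondaryMap2}. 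Once I know $[q^y_{x,z}]$ is an interior point of the disk (which holds because $\rho$ is maximal — wait, the hypothesis here is only $H_2$; but transversality of $x^2_\rho$ and $y^2_\rho$ already forces $q^y_{x,z}$ non-degenerate, hence $[q^y_{x,z}]$ lies in the interior of one of the two ``definite'' regions, and by Remark~\ref{rem:MaximalStable}/Theorem~\ref{thm:MaximalH_n}, $H_2$ forces it into $\mathcal{Q}^+$), the claim that it lies on the chord between these two specified boundary points is then just the statement that these two lines (the ``$y^3$-line'' and the ``tangent line'') together with the chord form a configuration forced by $y^1_\rho\subset y^2_\rho\subset y^3_\rho$: the chord joining the two endpoints is exactly the $y^3$-line, and it contains $[q^y_{x,z}]$. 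I expect the main obstacle to be bookkeeping: keeping straight which Lagrangian condition ($y^1\subset P$ vs.\ $P\subset y^3$) corresponds to which singular-line condition on $\dot q$ or on $q$ itself, since these involve transporting subspaces across the $\omega$-duality $Q\cong (x^2_\rho)^*$ and back — the danger is an off-by-one or a swapped orthogonal. I would pin it down by writing everything in a symplectic basis adapted to the flag $y^1_\rho\subset y^2_\rho\subset y^3_\rho$ and the splitting $\R^4 = x^2_\rho\oplus z^2_\rho$, reducing the whole lemma to a $2\times 2$ symmetric-matrix computation.
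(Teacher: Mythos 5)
Your framework is right: $\mathbb{P}(\mathcal{Q}(x^2_\rho))$ is a projective plane, the rank-one forms give a conic parametrized by $\iota$, and the lemma amounts to the collinearity of $[q^y_{x,z}]$, $\iota(y^3_\rho\cap x^2_\rho)$ and $\iota((y^1_\rho\oplus z^2_\rho)\cap x^2_\rho)$. You also correctly identify the projective line spanned by $[q^y_{x,z}]$ and $\iota((y^1_\rho\oplus z^2_\rho)\cap x^2_\rho)$ with the projectivized affine line of Lagrangians $P$ with $y^1_\rho\subset P\subset y^3_\rho$. But the entire content of the lemma is that this line \emph{also} passes through $\iota(y^3_\rho\cap x^2_\rho)$, and at exactly that step your argument contains the ``swapped orthogonal'' you flagged as a danger: a rank-one symmetric map $\dot{u}$ with $\im(\dot{u})=y^3_\rho\cap z^2_\rho$ has singular line $\Ker(\dot{u})=(y^3_\rho\cap z^2_\rho)^{\perp}\cap x^2_\rho=(y^1_\rho\oplus z^2_\rho)\cap x^2_\rho$, \emph{not} $y^3_\rho\cap x^2_\rho$. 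So ``directed by symmetric maps with image in $y^3_\rho\cap z^2_\rho$, equivalently with singular line $y^3_\rho\cap x^2_\rho$'' is false; it only recovers the endpoint $\iota((y^1_\rho\oplus z^2_\rho)\cap x^2_\rho)$ that you already had from Lemma \ref{lem:DescriptionDerivativeBondaryMap2}, and gives no information about the second intersection of the line with the conic. The collinearity is not a formal consequence of the chart identifications.

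The missing computation (which is the paper's proof) goes as follows: take $0\neq\mathrm{v}\in y^3_\rho\cap x^2_\rho$; since $\mathrm{v}+u^y_{x,z}(\mathrm{v})\in y^2_\rho\subset y^3_\rho$, one gets $u^y_{x,z}(\mathrm{v})\in y^3_\rho\cap z^2_\rho=\im(u_0)$, where $u_0$ is the rank-one direction with $\Ker(u_0)=(y^1_\rho\oplus z^2_\rho)\cap x^2_\rho$. By part (iv) of Lemma \ref{lem:DistinctPoints} --- which is where property $H_2$ enters, and which your sketch never invokes --- one has $\mathrm{v}\notin\Ker(u_0)$, hence $u_0(\mathrm{v})\neq 0$, so a suitable combination $u^y_{x,z}-\lambda u_0$ kills $\mathrm{v}$; this exhibits $q^y_{x,z}=q_1+\lambda q_0$ with $q_1$ having $y^3_\rho\cap x^2_\rho$ singular and $q_0$ having $(y^1_\rho\oplus z^2_\rho)\cap x^2_\rho$ singular, which is the claimed collinearity. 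Without Lemma \ref{lem:DistinctPoints}(iv) the two boundary points could coincide and the chord would degenerate. (Your side worry about $[q^y_{x,z}]$ lying in the interior of the disk is not needed here; positivity follows from Theorem \ref{thm:MaximalH_n} but plays no role in proving the collinearity.)
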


This projective line is illustrated as a dotted line in Figure \ref{fig:IllustrationCircles}.
Through the identification $\mathbb{P}(\mathcal{Q}^+(x^2_\rho))\cong \mathbb{H}^2$, this line corresponds to a geodesic.

\begin{proof}
Let $\mathrm{v}\in y^3_\rho\cap x^2_\rho$ be a non-zero vector. One has $u^y_{x,z}(\mathrm{v})+\mathrm{v}\in y^2_\rho\subset y^3_\rho$ and hence $u^y_{x,z}(\mathrm{v})\in y^3_\rho\cap z^2_\rho$.

Let $u_0\in \Sym_{x,y}$ be such that $\Ker(u_0)=(y^1_\rho\oplus z^2_\rho)\cap x^2_\rho$.
Since $u_0$ is symmetric, $\im(u_0)$ is orthogonal with respect to $\omega$ to $\Ker(u_0)$.
Since $y^1_\rho$ and $y^3_\rho$ are orthogonal with respect to $\omega$, then $\im(u_0)=y^3_\rho\cap z^2_\rho$.
Hence $u_0(\mathrm{v})\in y^3_\rho\cap z^2_\rho$, therefore $u_0(\mathrm{v})$ and $u^y_{x,z}(\mathrm{v})$ are collinear.

\medskip

By the part (iv) of Lemma \ref{lem:DistinctPoints} and since $\rho$ satisfies property $H_2$, $y^3_\rho\cap x^2_\rho\cap \Ker(u_0)=\{0\}$, and hence $u_0(\mathrm{v})\neq 0$.
Therefore, for some $\lambda\in \R$, $u_1(\mathrm{v})=0$ with $u_1=u^{y}_{x,z}-\lambda u_0$.
In particular $q^y_{x,z}=q_1+\lambda q_0$ where $q_0, q_1\in \mathcal{Q}(P)$ are such that $(y^1_\rho\oplus z^2_\rho)\cap x^2_\rho$ is singular for $q_0$ and $y^3_\rho\cap x^2_\rho$ is singular for $q_1$.
Hence $q_1\in \iota(y^3_\rho\cap x^2_\rho)$ and $q_2\in \iota\left((y^1_\rho\oplus z^2_\rho)\cap x^2_\rho\right)$, which concludes the proof.

\end{proof}

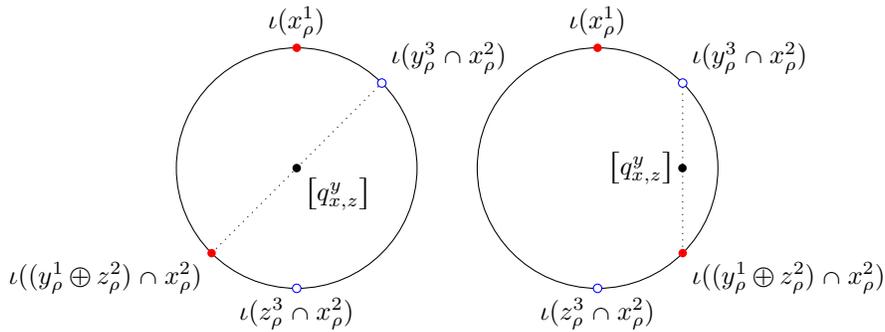
\begin{figure}[!ht]
\begin{center}
\begin{tikzpicture}[scale=0.8]

\draw (0,0) circle (2);

\fill (0,0) circle (2pt);
\node[below right] at (0,0) {$\left[q_{x,z}^y\right]$};
\fill[red] (90:2) circle (2pt);
\node[above] at (90:2) {$\iota(x^1_\rho)$};
\draw[blue,fill=white] (270:2) circle (2pt);
\node[below] at (270:2) {$\iota(z_\rho^3\cap  x_\rho^2)$};

\fill[red] (225:2) circle (2pt);
\node[above right] at (45:2) {$\iota(y_\rho^3\cap  x_\rho^2)$};
\node[below left] at (225:2) {$\iota((y_\rho^1\oplus z^2_\rho)\cap x^2_\rho)$};
\draw[dotted] (225:2) -- (45:2);
\draw[blue,fill=white] (45:2) circle (2pt);

\begin{scope}[shift={(5,0)}]

\draw (0,0) circle (2);
\fill (1.41,0) circle (2pt);
\node[left] at (1.41,0) {$\left[q_{x,z}^y\right]$};
\fill[red] (90:2) circle (2pt);
\node[above] at (90:2) {$\iota(x^1_\rho)$};
\draw[blue,fill=white] (270:2) circle (2pt);
\node[below] at (270:2) {$\iota(z_\rho^3\cap  x_\rho^2)$};
\draw[blue,fill=white] (45:2) circle (2pt);
\fill[red] (-45:2) circle (2pt);

\node[below right] at (-45:2) {$\iota((y_\rho^1\oplus z^2_\rho)\cap x^2_\rho)$};
\draw[dotted] (-45:2) --(45:2);
\node[above right] at (45:2) {$\iota(y_\rho^3\cap  x_\rho^2)$};
\end{scope}
\end{tikzpicture}
\end{center}
\caption{Two possible configurations of the image by $\iota$ of the points in \eqref{eq:OrderPoints} in $\mathbb{P}(\mathcal{Q}(x^2_\rho))$.}
\label{fig:IllustrationCircles}
\end{figure}

In order to state Lemma \ref{lem:Order4Points}, we need to define a notion of \emph{cyclically oriented} quadruple on a topological circle.

\begin{defn}

\label{defn:Cyclically ordered}
Let $V$ be a vector space of dimension $2$. A quadruple $(a,b,c,d)$ of points in $\mathbb{P}(V)$ is \emph{cyclically ordered} if $b$ and $d$ are in different components of $\mathbb{P}(V)\setminus \{a,c\}$, or equivalently if the following cross ratio is negative:
\begin{equation}\label{eq:CrossRatio}
\mathrm{cr}(a,b;c,d):=\frac{\overline{a}\wedge \overline{b}}{\overline{c}\wedge \overline{b}} \times\frac{\overline{c}\wedge \overline{d}}{\overline{a}\wedge \overline{d}}.
\end{equation}

Here $\overline{a},\overline{b},\overline{c},\overline{d}$ are any non-zero vectors representing the lines $a,b,c,d$.
\end{defn}

The key argument of the proof of Theorem \ref{thm:H2toH1} is the Lemma \ref{lem:Order4Points}. We will use the geometric fact from Lemma \ref{lem:ConeConvex} that a curve whose derivative lies in a cone must remain in that cone.

\begin{lem}
\label{lem:Order4Points}
Let $\rho:\Gamma_g\to \Sp(4,\R)$ be a $\{1,2\}$-Anosov representation that satisfies property $H_2$. 
There exists some triple of distinct points  $x,y,z\in \partial \Gamma_g$ such that the quadruple
\begin{equation}
 \label{eq:OrderPoints}
(\;z_\rho^3\cap  x_\rho^2,\;\;(y_\rho^1\oplus z^2_\rho)\cap x^2_\rho,\;\;y_\rho^3\cap  x_\rho^2,\;\;x^1_\rho\;)
\end{equation}

is cyclically ordered in $\mathbb{P}(x_\rho^2)$.
\end{lem}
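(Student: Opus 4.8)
The plan is to realise the four lines as points on the boundary circle of the projective hyperbolic plane $\mathbb{H}^2:=\mathbb{P}\big(\mathcal{Q}^+(x^2_\rho)\big)$ attached to the Lagrangian $x^2_\rho$, and to deduce the cyclic ordering from a convexity property of the curve $t\mapsto\big[q^{\phi(t)}_{x,z}\big]$. Since $\rho$ satisfies $H_2=H_n$, Theorem \ref{thm:MaximalH_n} shows it is maximal for some orientation, so Lemma \ref{lem:IntermediateH1} furnishes a positive triple $(x,y,z)$ for which $\psi(w):=(w^1_\rho\oplus z^2_\rho)\cap x^2_\rho$ differs from $\psi(x)=x^1_\rho$ and from $\psi(y)$ for every $w\in(x,y)_z$. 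Write $a=z^3_\rho\cap x^2_\rho$, $b=(y^1_\rho\oplus z^2_\rho)\cap x^2_\rho=\psi(y)$, $c=y^3_\rho\cap x^2_\rho$, $d=x^1_\rho=\psi(x)$. The four parts of Lemma \ref{lem:DistinctPoints}, together with $H_2$ and the transversality of boundary maps, make these lines pairwise distinct except that $\iota(a)\ne\iota(c)$, \ie $z^3_\rho\cap x^2_\rho\not\subset y^3_\rho$, has to be imposed separately; this is an open and dense condition on $(x,y,z)$ and is easily built into the construction of Lemma \ref{lem:IntermediateH1}.

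Next I set up the curve. As $\rho$ satisfies $H_n$, the image of $\xi^2_\rho$ is $\mathcal{C}^1$ (Theorem \ref{thm:SmoothBoundary}), and Lemma \ref{lem:DescriptionDerivativeBondaryMap2}, read in the chart $\Sym_{x,z}\simeq\mathcal{Q}(x^2_\rho)$, gives an orientation-preserving homeomorphism $\phi:\R\to\partial\Gamma_g\setminus\{z\}$ with $\phi(0)=x$ and $\phi(t_y)=y$ for some $t_y>0$, such that $f(t):=q^{\phi(t)}_{x,z}$ is a $\mathcal{C}^1$ curve with $f(0)=0$ whose velocity $\dot f(t)$ is at all times a nonzero semi-positive rank-one form with $\ker\dot f(t)=\psi(\phi(t))$; for $t\in(0,t_y]$ maximality of $\rho$ forces $f(t)$ to be positive definite. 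Hence the curve $\gamma(t):=[f(t)]$, defined for $t\in(0,t_y]$ and extending continuously to $t=0$ with $\gamma(0)=\iota(d)$ (as $f(t)/t\to\dot f(0)$, of projective class $\iota(\psi(x))=\iota(d)$), satisfies $\gamma((0,t_y])\subset\mathbb{H}^2$, its tangent line at each $t$ passes through $[\dot f(t)]=\iota(\psi(\phi(t)))\in\partial\mathbb{H}^2$ and the curve moves towards that boundary point (the velocity being a semi-\emph{positive} form), and $\gamma(t_y)=[q^y_{x,z}]$ lies strictly between $\iota(c)$ and $\iota(b)$ on the geodesic $\iota(c)\iota(b)$ by Lemma \ref{lem:Aligment3ptsQ}.

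Now the main step. By the choice of the triple, the path $t\in[0,t_y]\mapsto\iota(\psi(\phi(t)))$ runs from $\iota(d)$ to $\iota(b)$ while staying, for $t\in(0,t_y)$, in one open arc $A$ of $\partial\mathbb{H}^2\setminus\{\iota(d),\iota(b)\}$. Pick a linear form $\ell_G$ on $\mathcal{Q}(x^2_\rho)$ that vanishes on the plane over the geodesic $G=\iota(d)\iota(b)$ and is nonnegative on the $A$-side; using the lift of $\gamma|_{(0,t_y]}$ into $\mathcal{Q}^+(x^2_\rho)$, the velocity $\dot f(t)$ lies for every $t\in[0,t_y]$ in the closed convex cone $S:=\overline{\mathcal{Q}^+(x^2_\rho)}\cap\{\ell_G\ge 0\}$, and $f(0)=0\in S$, so Lemma \ref{lem:ConeConvex} yields $f(t)\in S$ for all $t\in[0,t_y]$: the curve $\gamma([0,t_y])$ stays in the closed half-disc of $\mathbb{H}^2$ bounded by $G$ on the $A$-side. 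Since $\gamma(t_y)$ is an interior point lying on $\iota(c)\iota(b)$, $\iota(b)\in G$, and $\iota(c)\ne\iota(d)$, the chord $\iota(c)\iota(b)$ meets $G$ only at $\iota(b)$; as it also passes through $\gamma(t_y)$ on the open $A$-side, its other endpoint $\iota(c)$ lies in $A$. To place $\iota(a)$, use the symplectic identity $q^z_{x,y}=-\,q^y_{x,z}$, immediate in a symplectic basis in which $x^2_\rho$ and $z^2_\rho$ are the coordinate Lagrangians (the two forms get opposite matrices), so $[q^z_{x,y}]=[q^y_{x,z}]=\gamma(t_y)$ in $\mathbb{P}(\mathcal{Q}(x^2_\rho))$; applying Lemma \ref{lem:Aligment3ptsQ} to the triple $(x,z,y)$ puts this same point on the geodesic joining $\iota(a)$ to $\iota\big((z^1_\rho\oplus y^2_\rho)\cap x^2_\rho\big)$. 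Thus the geodesics $\iota(c)\iota(b)$ and $\iota(a)\,\iota\big((z^1_\rho\oplus y^2_\rho)\cap x^2_\rho\big)$ cross at the interior point $\gamma(t_y)$, hence their endpoints alternate on $\partial\mathbb{H}^2$; running the same cone argument in the chart based at $x^2_\rho$ with infinity point $y^2_\rho$ — for a triple built so that $(w^1_\rho\oplus y^2_\rho)\cap x^2_\rho$ also avoids its values at $x$ and at $z$ along the arc $[z,x]_y$ — places $\iota\big((z^1_\rho\oplus y^2_\rho)\cap x^2_\rho\big)$ on the side of $\{\iota(b),\iota(c)\}$ containing $\iota(d)$. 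Transporting the resulting separation statements along the homeomorphism $\iota:\mathbb{P}(x^2_\rho)\to\partial\mathbb{H}^2$ then yields that the quadruple \eqref{eq:OrderPoints} is cyclically ordered in $\mathbb{P}(x^2_\rho)$.

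The step I expect to be the genuine obstacle is the location of $\iota(a)$: unlike $\iota(b),\iota(c),\iota(d)$ it is not a value of $\psi$, so the single curve $\gamma$ does not see it, and one is forced either to run a second cone argument in the chart with infinity point $y^2_\rho$ — which requires strengthening the construction in Lemma \ref{lem:IntermediateH1} so that a single triple controls both $(w^1_\rho\oplus z^2_\rho)\cap x^2_\rho$ and $(w^1_\rho\oplus y^2_\rho)\cap x^2_\rho$ on the relevant arcs — or to identify $\iota(a)$ with $\lim_{w\to z}\iota(w^3_\rho\cap x^2_\rho)$ (legitimate since $z^3_\rho\cap x^2_\rho$ has the expected dimension) and carry the cone estimate across that limit. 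In either route the delicate part is keeping track of orientations and signs, so that the arc $A$ occurring in one chart corresponds correctly to the arc occurring in the other; the confinement of $\gamma$ to a half-disc via Lemma \ref{lem:ConeConvex} is by contrast clean.
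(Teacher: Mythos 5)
Your overall strategy is the paper's: confine the curve $t\mapsto q^{\phi(t)}_{x,z}$ to a closed subcone of $\overline{\mathcal{Q}^+(x^2_\rho)}$ cut out by the plane over the geodesic $\iota(\psi(x))\,\iota(\psi(y))$, using Lemma \ref{lem:ConeConvex} together with the fact that the velocity directions are $\iota(\psi(\phi(t)))$; the paper just runs this as a proof by contradiction rather than directly. Your deduction that $\iota(c)=\iota(y^3_\rho\cap x^2_\rho)$ lies in the arc $A$ swept out by $\iota\circ\psi\circ\phi$ is correct and complete (it needs $\iota(c)\neq\iota(d)$, which is Lemma \ref{lem:DistinctPoints}(ii), and the positioning of $[q^y_{x,z}]$ from Lemma \ref{lem:Aligment3ptsQ}).

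The gap is the step you yourself flag: locating $\iota(a)=\iota(z^3_\rho\cap x^2_\rho)$. Neither of your proposed routes works as stated. The identity $q^z_{x,y}=-q^y_{x,z}$ is true, but it only tells you that $[q^y_{x,z}]$ also lies on the geodesic $\iota(a)\,\iota((z^1_\rho\oplus y^2_\rho)\cap x^2_\rho)$, and to exploit the resulting crossing you must control the second endpoint by a cone argument in the chart at infinity $y^2_\rho$ --- which requires a \emph{single} triple simultaneously satisfying the conclusion of Lemma \ref{lem:IntermediateH1} for both charts, something that lemma does not provide and that you do not construct. What you are missing is that no second argument is needed: by Lemma \ref{lem:DistinctPoints}(iii), applied to the triples $(x,w,z)$ for $w\in(x,y]_z$ and to $(x,y,z)$ via part (i) at $w=x$, one has $\psi(w)\neq z^3_\rho\cap x^2_\rho$ for \emph{every} $w\in[x,y]_z$. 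Since the closed arc between $\iota(d)$ and $\iota(b)$ that your curve sweeps out is exactly $\iota(\psi([x,y]_z))$, the point $\iota(a)$ does not lie on $\overline{A}$, hence lies in the complementary open arc of $\partial\mathbb{H}^2\setminus\{\iota(d),\iota(b)\}$. Together with $\iota(c)\in A$ this shows $\iota(a)$ and $\iota(c)$ are separated by $\{\iota(b),\iota(d)\}$, i.e.\ the quadruple \eqref{eq:OrderPoints} is cyclically ordered; in particular $\iota(a)\neq\iota(c)$ comes for free and need not be ``imposed separately'' as you suggest. This one-line observation is precisely how the paper's proof closes: there, assuming the quadruple is not cyclically ordered forces $\iota(a)$ and $\iota(c)$ onto the same side, the cone $C$ is chosen to avoid both, and the fact that the swept arc avoids $\iota(a)$ puts the velocities in $C$, contradicting $[q^y_{x,z}]\notin\mathbb{P}(C)$.
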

Figure \ref{fig:IllustrationCircles} illustrates this Lemma. The depicted filled points are distinct from the unfilled ones because of Lemma \ref{lem:DistinctPoints}. The $4$ points depicted are cyclically ordered as in \eqref{eq:OrderPoints} on the right picture, but not on the left.

\begin{proof}
Our goal is to find $x,y,z\in\partial \Gamma_g$ such that the $4$ points in \eqref{eq:OrderPoints} are cyclically ordered, \ie are not ordered as in the left part of Figure \ref{fig:IllustrationCircles}. Because of Theorem \ref{thm:MaximalH_n}, we can choose an orientation of $\partial \Gamma_g$ such that $\rho$ is maximal. Let $(x,y,z)$ be a positive triple in $\partial \Gamma_g$ that satisfies the properties from Lemma \ref{lem:IntermediateH1}. Let $\psi(w)=\left(w_\rho^1\oplus z^2_\rho\right)\cap x^2_\rho$ for $w\neq z$ as defined in Lemma \ref{lem:IntermediateH1}. Note that $\psi(x)=x^1_\rho$.

\medskip

Let us assume that the four points in \eqref{eq:OrderPoints} are not cyclically ordered for the triple $(x,y,z)$. This means that $y^3_\rho\cap x^2_\rho$ and $z^3_\rho\cap x^2_\rho$ are in the same connected component of $\mathbb{P}(x^2_\rho)\setminus\{\psi(x),\psi(y)\}$. 

\medskip

The linear plane in $\mathcal{Q}(x^2_\rho)$ passing through $\iota(\psi(x))$ and $\iota(\psi(y))$ cuts the closure $\overline{\mathcal{Q}^+(x^2_\rho)}$ of the cone of scalar products into two closed cones. Let $C$ be the closure of the one whose projectivization satisfies $\iota(y^3_\rho\cap x^2_\rho),\iota(z^3_\rho\cap x^2_\rho)\notin \mathbb{P}(C)$.

The convex set $\mathbb{P}(C)$ is illustrated as the colored region in Figure \ref{fig:IllustrationConvex}. One has $\iota(\psi(x)),\iota(\psi(y))\in\partial\mathbb{P}(C)$. Moreover Lemma \ref{lem:Aligment3ptsQ} implies that $[q^y_{x,z}]$ lies in the segment between $\iota(\psi(y))$ and $\iota(y^3_\rho\cap x^2_\rho)$. As a consequence $[q^y_{x,z}]\notin \mathbb{P}(C)$.

\medskip

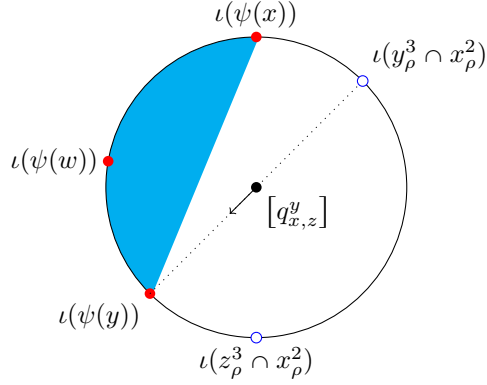
\begin{figure}[!ht]
\usetikzlibrary{patterns}
\begin{center}
\begin{tikzpicture}
\fill[cyan] (89:2) arc (89:226:2) -- cycle;
\draw (0,0) circle (2);

\fill (0,0) circle (2pt);
\node[below right] at (0,0) {$\left[q_{x,z}^y\right]$};
\fill[red] (90:2) circle (2pt);
\node[above] at (90:2) {$\iota(\psi(x))$};
\draw[blue,fill=white] (270:2) circle (2pt);
\node[below] at (270:2) {$\iota(z^3_\rho\cap x^2_\rho)$};
\fill[red] (225:2) circle (2pt);
\node[above right] at (45:2) {$\iota(y^3_\rho\cap x^2_\rho)$};
\fill[red] (170:2) circle (2pt);
\node[left] at (170:2){$\iota(\psi(w))$};
\node[below left] at (225:2) {$\iota(\psi(y))$};

\draw[dotted] (225:2) -- (45:2);
\draw[->] (0,0) -- (225:0.5);
\draw[blue,fill=white] (45:2) circle (2pt);
\end{tikzpicture}
\end{center}
\caption{$\mathbb{P}(\mathcal{Q}(x^2_\rho))$ and the convex $\mathbb{P}(C)$ from the proof of Lemma \ref{lem:Order4Points}.}
\label{fig:IllustrationConvex}
\end{figure}

Because of Lemma \ref{lem:DescriptionDerivativeBondaryMap2}, there exists a parametrization $\phi:\mathbb{R}\to \partial \Gamma_g\setminus \{z\}$, such that $\phi(0)=x, \phi(1)=y$ and $\xi^2_\rho\circ \phi$ is a $\mathcal{C}^1$ embedding. Let $\dot{q}(t_0)$ be the derivative at $t=t_0$ of the map:
$$t\mapsto q^{\phi(t)}_{x,z}.$$

Since $(x,y,z)$ is positive, for any $t_0\in \R$, $\dot{q}(t_0)$ is an element of $\overline{\mathcal{Q}^+(x^2_\rho)}$, whose projectivization is $\iota(\psi(\phi(t_0)))$. 

\medskip

The map $g:t\mapsto \iota(\psi(\phi(t)))$ is continuous from $[0,1]$ to the circle $\mathbb{P}(x^2_\rho)$.
Because $(x,y,z)$ was chosen as in Lemma \ref{lem:Aligment3ptsQ}, one has $g(0)\neq g(1)$ and for $t\in (0,1)$, $g(t)\neq g(0),g(1)$.
Hence $g([0,1])$ is equal to one of the two arcs joining $g(0)$ and $g(1)$.
Because of Lemma \ref{lem:DistinctPoints}, $\iota(z^3_\rho\cap x^2_\rho)$ is not in the image of this map. 

Therefore $g([0,1])=\iota(\psi([x,y]_z))$ is equal to the closed arc in $\iota(\mathbb{P}(x^2_\rho))$ between $\iota(\psi(x))$ and $\iota(\psi(y))$ not containing $\iota(z^3_\rho\cap x^2_\rho)$.
In particular $\iota(\psi(w))\in \mathbb{P}(C)$ for $w\in [x,y]_z$.
Hence for all $t\in [0,1]$, $\dot{q}(t)\in C$. 

\medskip

Moreover $0=q^x_{x,z}=q^{\phi(0)}_{x,z}\in C$.
Hence, since $C$ is a closed cone, by Lemma \ref{lem:ConeConvex} $q^{\phi(t)}_{x,z}\in C$ for all $t\in [0,1]$.
But this would imply that $q^y_{x,z}\in C$.
We proved already that $[q^y_{x,z}]\notin \mathbb{P}(C)$, so this is a contradiction.
 
 \medskip
 
Hence the four points \eqref{eq:OrderPoints} are cyclically ordered for this choice of a triple $(x,y,z)$.

\end{proof}

\begin{thm}
\label{thm:H2toH1}

Let $\rho:\Gamma_g \to \Sp(4,\mathbb{R})$ be a $\{1,2\}$-Anosov representation that satisfies property $H_2$. 
The representation $\rho$ must satisfy property $H_{1}$.
\end{thm}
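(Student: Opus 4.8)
The plan is to argue by contradiction using the cyclic ordering produced in Lemma \ref{lem:Order4Points}. Assume $\rho$ fails property $H_1$; by Proposition \ref{prop:H_n-k} it then also fails $H_{2n-1}=H_3$, but more importantly failure of $H_1$ at some triple means there is a nonzero vector in $x^1_\rho \cap \big((y^1_\rho\cap z^3_\rho)\oplus z^0_\rho\big)$ — i.e., after unwinding Definition \ref{defn:Hn} and Lemma \ref{rem:equivalentFormHn} for $k=1$, that the sum $x^1_\rho + (y^1_\rho\cap z^3_\rho) + z^1_\rho$ is not direct. The first step is to translate this non-directness into a statement about the configuration of lines inside the plane $x^2_\rho$: project everything to $x^2_\rho$ along $z^2_\rho$ (using $x^2_\rho\oplus z^2_\rho=\R^4$), so that $x^1_\rho$, $(y^1_\rho\oplus z^2_\rho)\cap x^2_\rho = \psi(y)$, $y^3_\rho\cap x^2_\rho$, and $z^3_\rho\cap x^2_\rho$ become four lines in $\mathbb{P}(x^2_\rho)$. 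I expect failure of $H_1$ to be equivalent to one of these alignment/coincidence conditions that is incompatible with the cyclic ordering \eqref{eq:OrderPoints}.

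Concretely, the key steps are: (1) Fix, using Lemma \ref{lem:Order4Points}, a triple $(x,y,z)$ for which the quadruple \eqref{eq:OrderPoints} is cyclically ordered in $\mathbb{P}(x^2_\rho)$; by symmetry of the hypotheses and Theorem \ref{thm:MaximalH_n} we may also assume $\rho$ is maximal for the relevant orientation. (2) Express property $H_1$ for an arbitrary triple $(a,b,c)$ as a transversality/genericity statement in a suitable plane, exactly as $H_2$-type conditions were handled via the charts of Section \ref{sec:LinAlg}; here the natural ambient is $x^2_\rho$ (a Lagrangian, hence carrying $\mathcal{Q}(x^2_\rho)\cong\mathbb{H}^2$), and the relevant data are the lines $\iota(x^1_\rho)$, $\iota(\psi(b))$, $\iota(b^3_\rho\cap x^2_\rho)$, $\iota(c^3_\rho\cap x^2_\rho)$ on $\partial\mathbb{H}^2=\mathbb{P}(x^2_\rho)$. (3) Show that the failure of $H_1$ forces a collision among these four boundary points — for instance $x^1_\rho\in\{\,y^3_\rho\cap x^2_\rho,\ \psi(y),\ z^3_\rho\cap x^2_\rho\,\}$ or an analogous degeneracy — and then observe that every such collision contradicts either the cyclic ordering of \eqref{eq:OrderPoints} or the distinctness assertions of Lemma \ref{lem:DistinctPoints}. (4) Finally, since $H_1$ is an open and closed (in fact invariant, continuous-in-the-triple) condition and it is now established for the positive triple $(x,y,z)$ arising from Lemma \ref{lem:Order4Points}, and since cyclic ordering of \eqref{eq:OrderPoints} was obtained for that triple, conclude that $H_1$ holds for all triples, \ie $\rho$ satisfies $H_1$.

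The main obstacle I anticipate is step (3): cleanly identifying which algebraic degeneracy of the four lines in $\mathbb{P}(x^2_\rho)$ corresponds precisely to the failure of $H_1$, and checking that this degeneracy is ruled out by the cyclic ordering rather than merely by the (weaker) distinctness from Lemma \ref{lem:DistinctPoints}. Getting the bookkeeping right will require carefully unwinding the projections $y^1_\rho\mapsto\psi(y)$ and $y^1_\rho\mapsto y^3_\rho\cap x^2_\rho$ and using that $y^1_\rho$ and $y^3_\rho$ are $\omega$-orthogonal (Lemma \ref{lem:ParticularOrthogonality}) so that these two lines in $\mathbb{P}(x^2_\rho)$ are $\iota$-conjugate in the hyperbolic-plane sense; the cross-ratio in Definition \ref{defn:Cyclically ordered} should then detect exactly the obstruction. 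A secondary point to be careful about is that $H_1$ must be checked for \emph{every} triple, not just one, so I will need the same invariance/continuity argument used at the end of the proof of Theorem \ref{thm:MaximalH_n} (connectedness of the set of triples ordered like $(x,y,z)$, plus the swap symmetry $x\leftrightarrow y$ in the defining sum) to propagate from the single good triple to all of them.
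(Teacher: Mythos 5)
Your overall scaffolding matches the paper's proof: start from the cyclically ordered quadruple of Lemma \ref{lem:Order4Points}, propagate the ordering to all similarly oriented triples by connectedness and non-vanishing of the cross ratio (Lemma \ref{lem:DistinctPoints} supplies exactly the four distinctness facts needed to keep \eqref{eq:CrossRatio} defined and nonzero along a path of triples), and extract property $H_1$ from the resulting non-degeneracy. But the step you yourself flag as the main obstacle --- identifying which coincidence among the four lines in $\mathbb{P}(x^2_\rho)$ encodes the failure of $H_1$ --- is the actual content of the proof, and your partial attempt at it is off. First, your unwinding of $H_1$ is incorrect: for $N=4$, $k=1$ the sum \eqref{eq:PropHn} reads $x^1_\rho + y^1_\rho + z^2_\rho$ (since $z^{N+1-k}=\R^4$ and $z^{N-1-k}=z^2_\rho$), not $x^1_\rho+(y^1_\rho\cap z^3_\rho)+z^1_\rho$; the latter is always direct by transversality, so it cannot detect anything. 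Second, the collisions you guess ($x^1_\rho$ meeting one of the other three points) are among the pairs already controlled by Lemma \ref{lem:DistinctPoints}; the pair that matters is precisely the one that lemma does \emph{not} control.

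The missing idea is to work not with $H_1$ directly but with $H_3$, which is equivalent to $H_1$ by Proposition \ref{prop:H_n-k}. For $N=4$, $k=3$, and a triple whose third point plays the role of $u$, the sum \eqref{eq:PropHn} becomes $\left(v^3_\rho\cap u^2_\rho\right)+\left(w^3_\rho\cap u^2_\rho\right)+u^0_\rho$, so $H_3$ is exactly the statement that the lines $v^3_\rho\cap u^2_\rho$ and $w^3_\rho\cap u^2_\rho$ of $u^2_\rho$ are distinct --- i.e.\ that the first and third entries of the quadruple \eqref{eq:OrderPoints} do not collide. That is the one distinctness that cyclic ordering gives you beyond Lemma \ref{lem:DistinctPoints}, since a cyclically ordered quadruple $(a,b,c,d)$ must have $a\neq c$. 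For the propagation you should carry the sign of the cross ratio along the connected set of triples oriented like $(x,y,z)$ rather than assert that $H_1$ is ``open and closed'' in the triple (openness is clear, closedness is not, so that argument as stated does not go through). Having the quadruple cyclically ordered for every such triple gives $H_3$ for all triples of one orientation; the symmetry of the sum under exchanging $v$ and $w$ handles the other orientation, and Proposition \ref{prop:H_n-k} then yields $H_1$.
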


\begin{proof}

Let $\rho$ be a $\lbrace 1,2\rbrace$-Anosov representation that satisfies property $H_2$. By Theorem \ref{thm:MaximalH_n}, $\rho$ is maximal. Because of Lemma \ref{lem:Order4Points}, there exist a triple of distinct points $(x,y,z)$ in $\partial \Gamma_g$ such that the quadruple \eqref{eq:OrderPoints} is cyclically ordered. This is for instance the case in the right part of Figure \ref{fig:IllustrationCircles}.

\medskip

Let $(u,v,w)$ be a triple of distinct points on $\partial \Gamma_g$ that is oriented as $(x,y,z)$. These two triples are joined by a continuous path in the space of disjoint triples in $\partial \Gamma_g$. Along this path, the cross ratio of the points \eqref{eq:OrderPoints} is defined and cannot vanish, because of Lemma \ref{lem:DistinctPoints}. Hence the cross ratio of these points stays negative. In particular for every triple of distinct points $(u,v,w)$ that are oriented in the circle $\partial\Gamma_g$ as $(x,y,z)$, the following $4$ points are cyclically ordered:
$$(\;w_\rho^3\cap  u_\rho^2,\;\;(v_\rho^1\oplus w^2_\rho)\cap u^2_\rho,\;\;v_\rho^3\cap  u_\rho^2,\;\;u^1_\rho\;)$$

In particular ${w}^3_\rho\cap{u}^2_\rho\neq {v}^3_\rho\cap {u}^2_\rho$, therefore the following sum is direct :
$${w}^3_\rho\cap{u}^2_\rho+ {v}^3_\rho\cap {u}^2_\rho+u^0_\rho.$$

Since this expression is invariant if one exchanges $v$ and $w$, this holds for all triple $(u,v,w)$ of distinct points in $\partial \Gamma_g$. Therefore property $H_3$ holds for $\rho$. Finally, because of Proposition \ref{prop:H_n-k},   property $H_1$ holds for $\rho$.

\end{proof}

We end this section by presenting a Proposition that describes the behavior of $y\mapsto[q^y_{x,z}]$. This proposition is not used in the proof of the main theorem but it helps to understand Figure \ref{fig:IllustrationCircles}.

\begin{prop}
\label{prop:limitsProjection}
Let $\rho:\Gamma_g\to \Sp(4,\R)$ be a $\{1,2\}$-Anosov representation that satisfies property $H_2$. Let $x,z\in \partial \Gamma_g$ be two distinct points.

\begin{itemize}
\item[(i)] The limit in $\mathbb{P}(\mathcal{Q}(x^2_\rho))$ of $\left[q_{x,z}^y\right]$ when $y$ converges to $x$ is $\iota(x^1_\rho)\in \partial \mathbb{P}( \mathcal{Q}^+(x^2_\rho))$.

\item[(ii)] If moreover $\rho$ satisfies property $H_1$, the limit in $\mathbb{P}(\mathcal{Q}(x^2_\rho))$ of $\left[q_{x,z}^y\right]$ when $y$ converges to $z$ is $\iota(z^3_\rho\cap x^2_\rho)\in \partial \mathbb{P}( \mathcal{Q}^+(x^2_\rho))$.
\end{itemize}

\end{prop}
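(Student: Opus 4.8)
The statement concerns the boundary behavior of the map $y\mapsto[q^y_{x,z}]$ in the two cases $y\to x$ and $y\to z$. In both cases the strategy is to use the transversality of boundary maps (Proposition \ref{prop:TransversalityBoundaryMaps}) together with the chart description of Propositions \ref{prop:ChartSym} and \ref{prop:ChartQ}, and then exploit the alignment fact from Lemma \ref{lem:Aligment3ptsQ} to pin down the limit.

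For part (i), recall that $q^x_{x,z}=0$, so as $y\to x$ the bilinear forms $q^y_{x,z}$ converge to $0$ in $\mathcal{Q}(x^2_\rho)$, and the content is entirely about the \emph{direction} $[q^y_{x,z}]$ in which this convergence happens. I would first use Lemma \ref{lem:Aligment3ptsQ}: the point $[q^y_{x,z}]$ lies on the projective line through $\iota(y^3_\rho\cap x^2_\rho)$ and $\iota\big((y^1_\rho\oplus z^2_\rho)\cap x^2_\rho\big)=\iota(\psi(y))$. As $y\to x$, both $y^1_\rho\to x^1_\rho$ and $y^3_\rho\to x^3_\rho$ by continuity of the boundary maps, so $\psi(y)\to\psi(x)=x^1_\rho$ and $y^3_\rho\cap x^2_\rho\to x^3_\rho\cap x^2_\rho=x^1_\rho$ (here I use $x^1_\rho\subset x^2_\rho\subset x^3_\rho$ from compatibility, together with $\dim(x^3_\rho\cap x^2_\rho)=1$ by the corollary to Proposition \ref{prop:TransversalityBoundaryMaps}). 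Hence both endpoints of the relevant projective segment converge to $\iota(x^1_\rho)$, so $[q^y_{x,z}]$ is squeezed to $\iota(x^1_\rho)$; and since $x^1_\rho$ is singular for any element of $\iota(x^1_\rho)$, this limit lies on $\partial\mathbb{P}(\mathcal{Q}^+(x^2_\rho))$. The only thing requiring care is justifying that $[q^y_{x,z}]$ is genuinely trapped between the two endpoints (not merely collinear with them) near $y=x$, which follows because the two endpoints stay distinct for $y\neq x$ by part (ii) and (iv) of Lemma \ref{lem:DistinctPoints}, and $[q^y_{x,z}]$ depends continuously on $y$ with $[q^x_{x,z}]$ undefined only because $q^x_{x,z}=0$, so one reasons on a punctured neighborhood.

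For part (ii), as $y\to z$ the form $q^y_{x,z}$ degenerates because $y^n_\rho\to z^n_\rho$ and $z^n_\rho$ is the ``point at infinity'' of the chart $U_{z^2_\rho}$; so $u^y_{x,z}$ blows up and its direction is what survives. Using property $H_1$, which is available by hypothesis, I would again apply Lemma \ref{lem:Aligment3ptsQ}: $[q^y_{x,z}]$ lies on the segment between $\iota(y^3_\rho\cap x^2_\rho)$ and $\iota(\psi(y))$. As $y\to z$, $y^3_\rho\cap x^2_\rho\to z^3_\rho\cap x^2_\rho$, which is genuinely $1$-dimensional by transversality, and $\psi(y)=(y^1_\rho\oplus z^2_\rho)\cap x^2_\rho\to (z^1_\rho\oplus z^2_\rho)\cap x^2_\rho=z^2_\rho\cap x^2_\rho$; but property $H_1$ (equivalently $H_3$, via Proposition \ref{prop:H_n-k}) forces $\psi(y)$ to stay away from $z^3_\rho\cap x^2_\rho$, and in the limit one must analyze where $z^2_\rho\cap x^2_\rho$ lands. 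Since $z^2_\rho\cap x^2_\rho=\{0\}$ by transversality, the limit of $\psi(y)$ is not obtained by naive continuity — the point $\psi(y)$ escapes to a limit that I would identify, using the smoothness of $\xi^1_\rho$ and $\xi^3_\rho$ guaranteed by $H_1$ and Theorem \ref{thm:SmoothBoundary}, as precisely $z^3_\rho\cap x^2_\rho$ as well. Then both endpoints of the segment converge to $\iota(z^3_\rho\cap x^2_\rho)$, and the squeeze argument concludes as before, with the limit lying on $\partial\mathbb{P}(\mathcal{Q}^+(x^2_\rho))$ because $z^3_\rho\cap x^2_\rho$ is singular for it.

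\textbf{Main obstacle.} The delicate point is part (ii): the naive limit $y\to z$ sends $\psi(y)\to z^2_\rho\cap x^2_\rho=\{0\}$, so one cannot simply pass to the limit in the formula $\psi(y)=(y^1_\rho\oplus z^2_\rho)\cap x^2_\rho$. The right fix is to work with a continuously varying nonzero vector in $\psi(y)$ and track its direction, using property $H_1$ to guarantee $\psi(y)\neq z^3_\rho\cap x^2_\rho$ along the way and the $\mathcal{C}^1$ regularity of $\xi^1_\rho$ (Theorem \ref{thm:SmoothBoundary} applied with $k=1$, which needs $H_1$) to control the limiting direction; the resulting limit of $\psi(y)$ should coincide with $z^3_\rho\cap x^2_\rho$, after which Lemma \ref{lem:Aligment3ptsQ} and the squeeze argument finish the proof. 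The case $y\to x$ in part (i) is comparatively routine once Lemma \ref{lem:Aligment3ptsQ} is invoked.
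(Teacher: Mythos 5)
There is a genuine gap in your part (i). You compute the limit of the endpoint $y^3_\rho\cap x^2_\rho$ as $y\to x$ by ``continuity'', asserting $x^3_\rho\cap x^2_\rho=x^1_\rho$ via the dimension corollary to Proposition \ref{prop:TransversalityBoundaryMaps}. That corollary only applies to \emph{distinct} points: since $x^2_\rho\subset x^3_\rho$ by compatibility, one has $x^3_\rho\cap x^2_\rho=x^2_\rho$, which is $2$-dimensional. The intersection $y^3_\rho\cap x^2_\rho$ becomes non-transverse in the limit $y\to x$, so its limit cannot be read off by passing to the limit in each factor --- this is exactly the phenomenon you correctly flag for $\psi(y)$ in part (ii), but it occurs here too. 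Identifying the limit as $x^1_\rho$ is, via Lemma \ref{lem:ParticularOrthogonality}, equivalent to $y^1_\rho\oplus x^2_\rho\to x^3_\rho$, i.e.\ to the same nontrivial continuity statement (Theorem 7.1 of \cite{pozzetti2020conformality}) that requires property $H_1$ --- which is \emph{not} among the hypotheses of part (i). (It could be imported via Theorem \ref{thm:H2toH1}, but you neither do this nor notice that anything needs justifying.) The paper's proof of (i) sidesteps the issue entirely: it uses the $\mathcal{C}^1$ parametrization from Lemma \ref{lem:DescriptionDerivativeBondaryMap2} and the expansion $q^{\phi(t)}_{x,z}=t\dot{q}(0)+t\epsilon(t)$ at $t=0$, so that $[q^{\phi(t)}_{x,z}]\to[\dot{q}(0)]=\iota\bigl((x^1_\rho\oplus z^2_\rho)\cap x^2_\rho\bigr)=\iota(x^1_\rho)$; only the derivative direction at the single point $x$ is needed, and there the intersection is transverse.

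For part (ii) your outline is sound but incomplete at the decisive step: the limit $y^1_\rho\oplus z^2_\rho\to z^3_\rho$ is asserted (``should coincide'') rather than proved; a vague appeal to the smoothness of $\xi^1_\rho$ and $\xi^3_\rho$ does not deliver it. The paper obtains it by observing that $H_1$ is $(1,2,3)$-hyperconvexity in the sense of \cite{pozzetti2020conformality} and quoting their Theorem 7.1. Granting that input, your squeeze via Lemma \ref{lem:Aligment3ptsQ} (both endpoints of the chord converge to $\iota(z^3_\rho\cap x^2_\rho)$, and $[q^y_{x,z}]$ lies on the chord of the disk $\mathbb{P}(\mathcal{Q}^+(x^2_\rho))$ between them) is a correct and arguably cleaner alternative to the paper's argument, which instead shows that the derivative directions $[\dot{q}(t)]$ converge and uses an affine-cone trapping argument together with the divergence of $q^y_{x,z}$.
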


Because of Theorem \ref{thm:H2toH1}, it is actually not necessary to require property $H_1$ for part (ii).

\medskip

\begin{proof}

Because of Lemma \ref{lem:DescriptionDerivativeBondaryMap2}, there is a parametrization $\phi:\R\to\partial\Gamma_g \setminus \{z\}$ such that $t\mapsto q^{\phi(t)}_{x,z}$ is a $\mathcal{C}^1$ embedding. Let us assume that $\phi(0)=x$ and let $\dot{q}(t_0)$ be the derivative of $t\mapsto q^{\phi(t)}_{x,z}$ at $t=t_0$. Since $q^{x}_{x,z}=0$, one can write for $t$ close to $0$:

$$ q^{\phi(t)}_{x,z}=t\dot{q}(0)+t\epsilon(t),$$

with $\epsilon(t)\to 0$ when $t\to 0$.

This implies that the limit of $\mathbb{P}(q^{\phi(t)}_{x,z})$ when $t$ goes to $0$ is equal to $\mathbb{P}(\dot{q})=\iota\left((x^1_\rho\oplus z^2_\rho)\cap x^2_\rho\right)=\iota(x^1_\rho)$, which proves (i).

\medskip

Let's now prove (ii). A representation $\rho$ satisfies property $H_1$, if and only if it is $(1,2,3)$-hyperconvex in the sense of (\cite{pozzetti2020conformality} Definition 6.1). Hence (\cite{pozzetti2020conformality}, Theorem 7.1) implies that the hyperplane $y^1_\rho\oplus z^2_\rho$ converges to $z^3_\rho$ when $y$ converges to $z$. 

Therefore $[\dot{q}(t)]\in \mathbb{P}(\mathcal{Q}(x^2_\rho))$ converges to $\iota(z^3_\rho\cap x^2_\rho)$. Hence for any small closed cone $C$ in $\mathcal{Q}^+(x^2_\rho)$ containing $\iota(z^3_\rho\cap x^2_\rho)$ in its interior, there is an affine cone $C_0$ directed by $C$ such that for any $t$ big enough $q_{x,z}^{\phi(t)}\in C_0$. 

Since $q_{x,z}^y$ diverges when $y\to z$, then any subsequence of $[q_{x,z}^y]$ converges to a point in $\mathbb{P}(C)$. Hence for $t\to +\infty$, $[q_{x,z}^y]$ converges to $\iota(z^3_\rho\cap x^2_\rho)$. When $t\to -\infty$, a similar argument holds.

\end{proof}

\section{Hyperconvex representations.}
\label{sec:Hyperconvex}

Let $N\geq 2$ be an integer. Labourie introduced the notion of an hyperconvex representation.

\begin{defn}
A Borel Anosov representation $\rho$ is hyperconvex if for all distinct $x_1,\cdots,x_{N} \in \partial \Gamma_g$, the $N$ lines $(x_1)_\rho^1,(x_2)_\rho^1,\cdots,(x_{N})_\rho^1$ span the whole vector space $\R^N$.

A Borel Anosov representation $\rho$ is $\{a,b,c\}$-hyperconvex if for all $x,z,y\in \partial \Gamma_g$ distinct, then the following sum is direct :
$$x_\rho^a+ y_\rho^b+ z_\rho^c.$$

If $\rho$ is $\{a,b,c\}$-hyperconvex for all $1\leq a\leq b\leq c$ such that $a+b+c\leq 2n$, then we say that it is $3$-hyperconvex. 
\end{defn}

\begin{rem}
A representation is $\{a,b,c\}$-hyperconvex in this sense if and only if it is $(a,b,N-c)$-hyperconvex in the sense of \cite{pozzetti2020conformality} .
\end{rem}

The following theorem of Labourie \cite{LabourieHyperconvexity} will enable us to show hyperconvexity using property $H$ for any maximal and Borel Anosov representation in $\Sp(4,\mathbb{R})$.

\begin{thm}[{\cite[Lemma 7.1]{LabourieHyperconvexity}}]
\label{thm:3HypImpliesHyp}
Every Borel Anosov representation that satisfies property $H$ and that is $3$-hyperconvex is hyperconvex.
\end{thm}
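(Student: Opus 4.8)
The plan is to prove the stronger \emph{Frenet} (osculation) property and extract hyperconvexity as its top-dimensional case. Set $V=\R^N$, and for $1\le p\le N$ consider the statement $(\star_p)$: for all distinct $x_1,\dots,x_p\in\partial\Gamma_g$ and all integers $a_1,\dots,a_p\ge 1$ with $a_1+\dots+a_p\le N$, the sum $\sum_{i=1}^p (x_i)^{a_i}_\rho$ is direct. Hyperconvexity is exactly $(\star_N)$ with all $a_i=1$. I would prove $(\star_p)$ by induction on $p$. The cases $p=1,2$ are immediate from compatibility and from the general position of boundary maps (the corollary to Proposition \ref{prop:TransversalityBoundaryMaps}, which gives $\dim\!\big((x_1)^{a_1}_\rho\cap (x_2)^{a_2}_\rho\big)=\max(a_1+a_2-N,0)=0$ when $a_1+a_2\le N$), and the case $p=3$ is precisely the hypothesis that $\rho$ is $3$-hyperconvex. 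So all the content lies in the step from $p$ to $p+1$, and this is where property $H$ must enter.

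The key input is an \emph{osculation lemma}: for distinct points, as $y$ tends to $x$,
$$\lim_{y\to x}\left((y)^{a}_\rho\oplus (x)^{b}_\rho\right)=(x)^{a+b}_\rho\qquad (a+b\le N).$$
This is genuinely a first-order statement, not a continuity triviality: the naive "limit of the sum equals the sum of the limits" is false, since by compatibility $(x)^a_\rho+(x)^b_\rho=(x)^{\max(a,b)}_\rho$ has the wrong dimension. The correct limit is forced by the derivatives of the boundary maps, which is exactly the information supplied by property $H$ through Theorem \ref{thm:SmoothBoundary}: each $\xi^k_\rho$ has $\mathcal{C}^1$ image with tangent direction prescribed by $(x)^{k-1}_\rho\subset\,\cdot\,\subset(x)^{k+1}_\rho$. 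I would prove the lemma first for $a=b=1$, where it reduces to the statement that secants of the $\mathcal{C}^1$ curve $\xi^1_\rho$ converge to the tangent $2$-plane $(x)^2_\rho$, and then in general by induction on $a+b$, feeding in the $\mathcal{C}^1$-ness of the intermediate maps $\xi^k_\rho$. It is here that the \emph{full} strength of property $H$ (all indices, not a single $H_k$) is used.

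Granting the osculation lemma, the inductive step runs by continuity. Fix a dimension vector $(a_1,\dots,a_{p+1})$ with $\sum a_i\le N$ and a cyclic order, let $X$ be the connected space of distinct $(p+1)$-tuples in that order, and let $B\subset X$ be the closed locus where $\sum (x_i)^{a_i}_\rho$ fails to be direct. Letting the last point collide with its neighbour and invoking the osculation lemma, the configuration degenerates to a $p$-point configuration with merged dimension vector $(a_1,\dots,a_{p-1},a_p+a_{p+1})$, which is direct by $(\star_p)$; since directness is an open condition, this produces points of $X\setminus B$ arbitrarily close to every collision stratum, so $X\setminus B$ is nonempty and open. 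The remaining, and main, difficulty is to show $X\setminus B$ is also \emph{closed} in $X$, i.e.\ to exclude degeneration at an interior configuration: openness alone does not suffice, since directness can fail in a limit. For this I would analyse a hypothetical first degeneration, at which $\xi^1_\rho$ would have to become tangent to a fixed subspace, and use property $H$ together with $3$-hyperconvexity to pin down the local contact multiplicity at each crossing (exploiting the transversality consequences of the sort recorded in Lemma \ref{lem:DistinctPoints}), combined with a winding/intersection-number argument for $\xi^1_\rho$ against a hyperplane to bound the total number of contacts by the $N-1$ already accounted for by $x_1,\dots,x_p$. The hard part is precisely this closedness step: the base cases and openness are formal, but ruling out interior degeneracies requires marrying the first-order rigidity coming from property $H$ with a global topological count, and it is here, rather than in the base case, that $3$-hyperconvexity does its real work. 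Once $X\setminus B=X$ for every cyclic order and dimension vector, $(\star_{p+1})$ follows, completing the induction and yielding hyperconvexity.
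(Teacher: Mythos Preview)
The paper does not prove this statement; it is quoted from \cite{LabourieHyperconvexity} and used as a black box in the proof of Theorem~\ref{thm:mainSp4V2}. There is therefore no argument in the paper itself to compare your proposal against.

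Your outline is nonetheless in the spirit of Labourie's actual proof: establish the osculation limit $\lim_{y\to x}\bigl((y)^a_\rho\oplus(x)^b_\rho\bigr)=(x)^{a+b}_\rho$ from property $H$ (via the $\mathcal{C}^1$ tangency of Theorem~\ref{thm:SmoothBoundary}), then run an induction on the number of points, carrying both directness and the limit behaviour through the induction. You also correctly locate the difficulty in the closedness step.

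The winding/intersection-number sketch for that step is where your proposal is weakest. An intersection count of the circle $\xi^1_\rho(\partial\Gamma_g)$ with a projective hyperplane in $\mathbb{P}(\R^N)$ does not directly produce a useful integer bound: the relevant class in $H_1(\mathbb{P}(\R^N);\Z)$ is $2$-torsion, so the homological intersection number yields at best a parity constraint, not the ``at most $N-1$ crossings'' bound you need. Making this work would require lifting to the sphere and showing that all crossings occur with the same sign, which is essentially a monotonicity statement of the same strength as what you are trying to prove. Labourie's argument does not go through a topological count; the inductive step is closed by strengthening the inductive hypothesis to include the full Frenet limit behaviour, so that degenerations are controlled from both sides. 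Your plan is reasonable in broad strokes, but the closedness step as written is a genuine gap rather than a routine detail.
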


Then the following theorem of Guichard \cite{GuichardHyperconvexity} will enable us to show that hyperconvex representation are Hitchin. This theorem is one part of the characterization of Hitchin representations by the hyperconvexity condition. The other part was proved by Labourie \cite{LabourieHyperconvexity}.

\begin{thm}[{\cite[Theorem 1]{GuichardHyperconvexity}}]
\label{thm:HypImplesHitchin}
Any Borel Anosov and hyperconvex representation $\rho:\Gamma_g\to \SL(N,\R)$ is Hitchin.

\end{thm}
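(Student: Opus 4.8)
The statement is the converse to Labourie's theorem that Hitchin representations have hyperconvex (Frenet) boundary curves, so the plan is to show that hyperconvexity singles out the connected component. First I would record the structure on the boundary curve of a Borel Anosov hyperconvex $\rho:\Gamma_g\to\SL(N,\R)$: it carries a continuous equivariant flag curve $(\xi^1_\rho,\dots,\xi^{N-1}_\rho)$, and hyperconvexity says precisely that $\xi^1_\rho$ is a \emph{convex curve}, \ie any $N$ of its points span $\R^N$. Hyperconvexity forces the intermediate genericity conditions (property $H$ and $3$-hyperconvexity in particular), so Theorem~\ref{thm:SmoothBoundary} gives $\mathcal C^1$ regularity, and bootstrapping this as in Labourie's theory of hyperconvex curves, $\xi^1_\rho$ is a \emph{Frenet curve}: it is $\mathcal C^{N-1}$, its iterated osculating subspaces at each point are well defined, they coincide with the flags $\xi^k_\rho$, and osculating flags at distinct points are transverse. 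Thus the full Anosov flag curve is the osculating flag curve of one convex curve.

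Next I would show that the set $\mathcal H$ of hyperconvex representations is both open and closed in the Borel Anosov locus $\mathcal A\subset\Hom(\Gamma_g,\SL(N,\R))$. That $\mathcal H$ is open in $\mathcal A$ is standard and underlies the openness of the Hitchin component: the boundary curve varies continuously, and the Wronskian $\bigwedge_{i=1}^N\xi^1_\rho(x_i)$, which measures the failure of hyperconvexity, stays nonzero under $\mathcal C^0$-small perturbation thanks to the uniform genericity estimates available for Anosov boundary maps. For closedness in $\mathcal A$: if $\rho_n\to\rho_\infty$ inside $\mathcal A$ with each $\rho_n\in\mathcal H$, the curves converge and a limiting Wronskian at pairwise distinct points cannot vanish, because the only way it could is along a collision of the $x_i$, which is ruled out by those same uniform estimates (equivalently, by the transversality of the osculating flags of $\xi^1_{\rho_\infty}$ from the first step). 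Hence $\mathcal H$ is a union of connected components of $\mathcal A$.

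The Veronese curve is strictly convex, so every $N$-Fuchsian representation lies in $\mathcal H$; by Labourie's theorem the whole Hitchin component consists of Borel Anosov hyperconvex representations, hence it is one of the connected components of $\mathcal A$ that make up $\mathcal H$. To conclude it remains to show $\mathcal H$ is connected, equivalently that every hyperconvex representation can be joined to an $N$-Fuchsian one through a path of hyperconvex (hence Borel Anosov) representations. I would do this by deforming the convex boundary curve: construct a continuous family of equivariant convex curves interpolating between $\xi^1_\rho$ and the Veronese curve --- each member being the Frenet curve of a Borel Anosov representation --- which produces the required path in $\mathcal A$. Combined with the reverse inclusion from Labourie, $\mathcal H$ is precisely the Hitchin component. (An alternative route to the connectedness step is to identify hyperconvex curves with positive curves into the flag variety in the sense of Fock and Goncharov, whose holonomies are known to exhaust the Hitchin component.)

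The main obstacle is this last step: producing the equivariant deformation of a convex curve to the Veronese curve through convex curves, equivalently ruling out any ``exotic'' component of hyperconvex representations disjoint from the Fuchsian locus. This is where the genuine rigidity of the statement sits --- one must keep the uniform non-degeneracy of the Wronskian, and the $\Gamma_g$-equivariance, under control along the entire deformation. A secondary technical point is the regularity bootstrap in the first step, upgrading the $\mathcal C^1$ of Theorem~\ref{thm:SmoothBoundary} to the full Frenet structure that matches the osculating and Anosov flag curves.
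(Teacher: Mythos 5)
This statement is not proved in the paper: it is quoted verbatim from Guichard's work (\cite[Theorem~1]{GuichardHyperconvexity}) and used as a black box, so there is no internal proof to compare against. Judged on its own terms, your proposal is an outline of the natural open--closed strategy rather than a proof, and the gap sits exactly where you locate it yourself: everything reduces to showing that the set $\mathcal H$ of hyperconvex Borel Anosov representations is connected (equivalently, that an equivariant hyperconvex curve can be deformed through equivariant hyperconvex curves to the Veronese curve). That step is the entire content of Guichard's theorem; deferring it to ``construct a continuous family of equivariant convex curves interpolating between $\xi^1_\rho$ and the Veronese curve'' is circular, and the parenthetical alternative (identifying hyperconvex curves with Fock--Goncharov positive curves) is itself a statement essentially equivalent to what is being proved. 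Guichard's actual argument is of a different nature: it is a careful analysis of degenerations of (not necessarily equivariant) hyperconvex curves, showing enough compactness for the space of such curves that the equivariant ones can be connected to the Veronese curve; none of that analysis appears in your sketch.

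There is also a concrete error in the closedness step. You argue that for a limit $\rho_\infty$ of hyperconvex $\rho_n$ inside the Anosov locus, ``a limiting Wronskian at pairwise distinct points cannot vanish \dots{} \emph{equivalently, by the transversality of the osculating flags of $\xi^1_{\rho_\infty}$}.'' Pairwise transversality of the limit flags (which is all that Anosovness of $\rho_\infty$ gives you, via Proposition~\ref{prop:TransversalityBoundaryMaps}) does not imply that $N$ lines $\xi^1_{\rho_\infty}(x_1),\dots,\xi^1_{\rho_\infty}(x_N)$ at $N\ge 3$ distinct points span $\R^N$: hyperconvexity is a genuinely stronger, $N$-point condition, and the Wronskian of a limit of hyperconvex curves can vanish at fixed distinct points without any collision of the $x_i$ (this is precisely why the degeneration analysis is delicate). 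So even the claim that $\mathcal H$ is closed in the Anosov locus is not established by your argument. The openness direction and the reduction ``$\mathcal H$ open, closed, connected, and containing the Fuchsian locus $\Rightarrow$ $\mathcal H$ is the Hitchin component'' are fine, but as written the proposal proves neither closedness nor connectedness, which are the two nontrivial points.
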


Finally we can prove our main Theorem.

\begin{thm}
\label{thm:mainSp4V2}
Every representation $\rho:\Gamma_g\to \Sp(4,\R)$ that is maximal and Borel Anosov is Hitchin.

\end{thm}

\begin{proof}
Because of Theorem \ref{thm:MaximalH_n} and Theorem \ref{thm:H2toH1}, the representation $\rho$ must satisfy property $H_2$, $H_1$, and therefore $H_3$ by  Proposition \ref{prop:H_n-k}. Hence $\rho$ satisfies property $H$. 

\medskip

When $n=2$, property $H_1$ is equivalent to $\{1,1,2\}$-hyperconvexity for a representation $\rho$. Moreover if $a+b+c=4$ with $a,b,c\geq 1$ then $\{a,b,c\}$ is equal to $\{1,1,2\}$. Therefore the representation $\rho$ is $3$-hyperconvex. By Theorem \ref{thm:3HypImpliesHyp}, the representation $\rho$ is hyperconvex, and by Theorem \ref{thm:HypImplesHitchin} it is Hitchin.

\end{proof}

\newpage
\bibliographystyle{hamsalpha}
\bibliography{biblio}
\end{document}